\numberwithin{equation}{section}
\let\OLDthebibliography\thebibliography
\renewcommand\thebibliography[1]{
	\OLDthebibliography{#1}
	\setlength{\parskip}{1pt}
	\setlength{\itemsep}{1pt plus 0.3ex}
}
\definecolor{ForestGreen}{rgb}{0.1,0.6,0.05}
\definecolor{EgyptBlue}{rgb}{0.063,0.1,0.6}
\newtheorem{theorem}{Theorem}[section]
\newtheorem{lemma}[theorem]{Lemma}
\newtheorem{proposition}[theorem]{Proposition}
\theoremstyle{definition}
\newtheorem{remark}[theorem]{Remark}
\newcommand{\W}{W_0^{1,p}(\Omega)}
\newcommand{\intO}{\int_\Omega}
\newcommand{\D}{\mathcal{D}_{\varphi_1}}
\title{Improved Friedrichs inequality for a subhomogeneous embedding
	\\ \medskip}
\author{Vladimir Bobkov ~\&~ Sergey Kolonitskii \\}
\date{}
\begin{document}
	
	\maketitle
	
	\vspace*{-5ex}
	\begin{abstract}
		For a smooth bounded domain $\Omega$ and $p \geqslant q \geqslant 2$, we establish quantified versions of the classical Friedrichs inequality $\|\nabla u\|_p^p - \lambda_1 \|u\|_q^p \geqslant 0$, $u \in W_0^{1,p}(\Omega)$, where $\lambda_1$ is a generalized least frequency. 
		We apply one of the obtained quantifications to show that the resonant equation $-\Delta_p u = \lambda_1 \|u\|_q^{p-q} |u|^{q-2} u + f$ coupled with zero Dirichlet boundary conditions possesses a weak solution provided $f$ is orthogonal to the minimizer of $\lambda_1$.

		\par
		\smallskip
		\noindent {\bf  Keywords}: 
		$p$-Laplacian; sublinear; improved Friedrichs inequality; improved Poincar\'e inequality;  Steklov inequality.
		
		\smallskip		
		\noindent {\bf MSC2010}: 
		35J92,	
		35P30,	
		35A23,	
		47J10.	
	\end{abstract}
	
	\begin{quote}	
		\tableofcontents	
		\addtocontents{toc}{\vspace*{-2ex}}
	\end{quote}

	\section{Introduction}\label{sec:intro}
	
	Let $p \geqslant q > 1$ and let $\Omega \subset \mathbb{R}^N$ be a domain of finite (Lebesgue) measure, $N \geqslant 1$.
	The embedding $W_0^{1,p}(\Omega) \hookrightarrow L^q(\Omega)$ is well known to be compact and 
	the best constant of this embedding can be characterized by means of the generalized least frequency 
	\begin{equation}\label{eq:lambda1}
		\lambda_1 
		=
		\inf
		\left\{
		\frac{\intO |\nabla u|^p \,dx}{\left(\intO |u|^q \,dx\right)^\frac{p}{q}}:~
		u \in W_0^{1,p}(\Omega) \setminus \{0\}
		\right\}.
	\end{equation}
	Let us recall several classical facts about $\lambda_1$ and its minimizers. 
	We have $\lambda_1>0$ and $\lambda_1$ is attained by a function $\varphi_1 \in \W$.
	In view of the subhomogeneity assumption $p \geqslant q$, the minimizer $\varphi_1$ is known to be \textit{unique} modulo scaling, see, e.g., \cite{otani2,kawohl,nazarov}.
	The regularity results \cite{diben,otani,talk} imply that $\varphi_1$ is bounded and belongs to $C^{1,\beta}_\text{loc}(\Omega)$ with some $\beta \in (0,1)$. 
	If, in addition, $\Omega$ is of class $C^{1,\alpha}$, $\alpha \in (0,1)$, then $\varphi_1 \in C^{1,\beta}(\overline{\Omega})$, see \cite{Lieberman}.
	By the strong maximum principle (see, e.g., \cite{PS2}), $\varphi_1$ cannot attain zero values in $\Omega$. Accordingly, we assume, without loss of generality, that $\varphi_1>0$ in $\Omega$. 	
	We refer to \cite{BrFr,FL} for a comprehensive overview of the properties of $\lambda_1$, $\varphi_1$, and other critical values and points of the Rayleigh-type quotient in \eqref{eq:lambda1}.
	
	\smallskip
	For functions from $\W$, the definition \eqref{eq:lambda1} leads to the standard \textit{Friedrichs inequality}\footnote{The name for the inequality might not be optimal from the historical perspective, but it is frequently used in the contemporary literature.
		We refer to \label{fot} \cite{KN} and \cite[Chapter II]{mitrin} for a comprehensive historical overview.}
	\begin{equation}\label{eq:Friedrichs}
		\intO |\nabla u|^p \,dx 
		- 
		\lambda_1 
		\left(\intO |u|^q \,dx\right)^\frac{p}{q}
		\geqslant 0,
	\end{equation}
	where the equality takes place if and only if $u$ is either a minimizer of $\lambda_1$ or trivial, i.e., $u \in \mathbb{R} \varphi_1$.
	We are interested in obtaining a quantification of the inequality \eqref{eq:Friedrichs}.
	In the homogeneous linear case $p=q=2$, in which \eqref{eq:Friedrichs} is often referred to as the \textit{Poincar\'e inequality}\textsuperscript{\ref{fot}}, 
	one easily sees that \eqref{eq:Friedrichs} can be refined via the spectral decomposition as
	\begin{equation}\label{eq:Friedrichs-linear}
		\intO |\nabla u|^2 \,dx - \lambda_1 \intO u^2 \,dx 
		\geqslant 
		\frac{\lambda_2-\lambda_1}{\lambda_2} \intO |\nabla u^\perp|^2 \,dx,
	\end{equation}
	where $\lambda_2$ is the second eigenvalue of the Dirichlet Laplacian in $\Omega$ and $u^\perp$ is the orthogonal projection in $L^2(\Omega)$ on the orthogonal complement of the eigenspace 
	$\mathbb{R}\varphi_1$. 
	In the homogeneous \textit{nonlinear} case $p=q > 2$, an improvement of the Poincar\'e inequality was obtained by \textsc{Fleckinger-Pell\'e} \& \textsc{Tak\'a\v{c}} in \cite{takac}. 
	Under certain regularity assumptions on $\Omega$, which will be discussed below, the authors proved that
	\begin{equation}\label{eq:improvedFriedrichs-Takac}
		\intO |\nabla u|^p \,dx 
		- 
		\lambda_1 
		\intO |u|^p \,dx
		\geqslant
		C\left(
		|\tilde{u}^\parallel|^{p-2} \intO |\nabla \varphi_1|^{p-2} |\nabla \tilde{u}^\perp|^2 \,dx + \intO |\nabla \tilde{u}^\perp|^p \,dx
		\right),
	\end{equation}
	where $\tilde{u}^\parallel \in \mathbb{R}$ and $\tilde{u}^\perp \in \W$ are defined from the $L^2(\Omega)$-decomposition $u = \tilde{u}^\parallel \varphi_1 + \tilde{u}^\perp$ by
	\begin{equation}\label{eq:decomp2-Takac}
		\tilde{u}^\parallel = \frac{\intO \varphi_1 u \, dx}{\intO \varphi_1^2 \,dx}
		\quad \text{and} \quad 
		\intO \varphi_1 \tilde{u}^\perp \,dx = 0.
	\end{equation}
	This result helps to provide fine estimates on function sequences and corresponding energy levels of various nonlinear problems. 
	In particular, \eqref{eq:improvedFriedrichs-Takac} 
	found its importance in the development of the Fredholm alternative for the $p$-Laplacian at $\lambda_1$, see the overviews \cite{drabek,takac-lec2,takac-lec1}, as well as in the investigation of other related equations, see, e.g., \cite{BT1,BobkovTanaka2021,CuDe,HalNid}. 
	We refer to \cite{AFT} and \cite{DKS} for versions of the improved Poincar\'e inequality \eqref{eq:improvedFriedrichs-Takac} in the entire space case and an exterior domain case, respectively, and to \cite{DrabTac} for the consideration of the case $p \in (1,2)$.
	In the case $p=2$ and $q > 2$, a quantification of a Friedrichs-type inequality for the function space  $W^{1,2}(\mathbb{S}^N)$ has been found in \cite{frank}. 
	
	\smallskip
	The aim of the present work is to obtain qualitative improvements of the nonlinear Friedrichs inequality
	\eqref{eq:Friedrichs} for $p \geqslant q \geqslant 2$
	in a form similar to \eqref{eq:Friedrichs-linear} and \eqref{eq:improvedFriedrichs-Takac}.
	For that purpose, we employ a different decomposition  than \eqref{eq:decomp2-Takac} which seems to be more suitable to work with nonhomogeneous problems. 
	Note that since $\varphi_1$ is a critical point of the Rayleigh quotient in \eqref{eq:lambda1}, it is a weak solution of the corresponding Lane-Emden problem:
	\begin{equation}\label{eq:varphi-solution}
		\intO |\nabla \varphi_1|^{p-2} \langle \nabla \varphi_1, \nabla v \rangle \,dx
		-
		\lambda_1 \left(\intO \varphi_1^q \,dx\right)^\frac{p-q}{q} \intO \varphi_1^{q-1} v \,dx = 0
		\quad \text{for all}~ v \in \W.
	\end{equation}
	Hereinafter, by $\langle \cdot,\cdot \rangle$ we denote the standard scalar product in $\mathbb{R}^N$.
	In view of the form of \eqref{eq:varphi-solution}, we decompose
	an arbitrary function $u \in \W$ as 
	\begin{equation}\label{eq:decomp}
		u = u^\parallel \varphi_1 + u^\perp,
	\end{equation}
	where $u^\parallel \in \mathbb{R}$ and $u^\perp \in \W$ are defined by
	\begin{equation}\label{eq:decomp2}
		u^\parallel = \frac{\intO \varphi_1^{q-1} u \, dx}{\intO \varphi_1^q \,dx}
		\quad \text{and} \quad 
		\intO \varphi_1^{q-1} u^\perp \,dx = 0.
	\end{equation}
	We also introduce the following mild regularity assumption on $\Omega$:
	\begin{enumerate}[label={($\mathcal{A}$)}]\addtolength{\itemindent}{0em}
		\item\label{A} In the case $N \geqslant 2$, $\Omega$ is a bounded domain of class $C^{1,\alpha}$ for some $\alpha \in (0,1)$. In the case $N=1$, $\Omega$ is a bounded open interval.
	\end{enumerate}
	
	\noindent
	Let us state our first main result on the improved Friedrichs inequality.
	\begin{theorem}\label{thm:main}
		Let $p \geqslant q \geqslant 2$ with $p>2$, and \ref{A} be satisfied.
		Then there exists $C=C(p,q,\Omega)>0$ such that
		\begin{equation}\label{eq:improvedFriedrichs}
			\intO |\nabla u|^p \,dx 
			- 
			\lambda_1 
			\left(\intO |u|^q \,dx\right)^\frac{p}{q}
			\geqslant
			C\left(
			|u^\parallel|^{p-2} \intO |\nabla \varphi_1|^{p-2} |\nabla u^\perp|^2 \,dx + \intO |\nabla u^\perp|^p \,dx
			\right)
		\end{equation}
		for any $u \in W_0^{1,p}(\Omega)$.
	\end{theorem}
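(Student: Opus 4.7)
My plan is to argue by a contradiction--compactness scheme that reduces \eqref{eq:improvedFriedrichs} to a spectral gap for the second variation of the functional $J(u) := \intO |\nabla u|^p\,dx - \lambda_1 (\intO|u|^q\,dx)^{p/q}$ at the minimizer $\varphi_1$. Both sides of \eqref{eq:improvedFriedrichs} are $p$-homogeneous in $u$, so I can split the argument according to whether $u^\parallel$ vanishes or not.

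\textbf{Case $u^\parallel=0$.} Here $u$ lies in the closed subspace $S := \{v \in \W : \intO \varphi_1^{q-1} v\,dx = 0\}$, and the right-hand side of \eqref{eq:improvedFriedrichs} reduces to $C\intO|\nabla u|^p\,dx$. The claim is that the Rayleigh quotient in \eqref{eq:lambda1} has a strict gap above $\lambda_1$ on $S$: by compactness of $\W \hookrightarrow L^q(\Omega)$ the infimum on $S$ is attained, and by the uniqueness (modulo scaling) of the minimizer of $\lambda_1$, which holds because $p \geq q$, this infimum must exceed $\lambda_1$ since $\varphi_1 \notin S$. This yields \eqref{eq:improvedFriedrichs} on $S$ with a constant $C_0>0$.

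\textbf{Case $u^\parallel\neq 0$.} By homogeneity I may set $u^\parallel = 1$ and write $u = \varphi_1 + w$ with $w \in S$. Expanding $J$ at $\varphi_1$ to second order, the zeroth-order term vanishes because $\varphi_1$ is a minimizer and the first-order term vanishes by \eqref{eq:varphi-solution}; the constraint $w \in S$ kills the cross-term proportional to $(\intO\varphi_1^{q-1}w\,dx)^2$, leaving the quadratic form
\begin{equation}\label{eq:Qform}
    Q(w) := \intO |\nabla\varphi_1|^{p-2}|\nabla w|^2\,dx + (p-2)\intO |\nabla\varphi_1|^{p-4}\langle\nabla\varphi_1,\nabla w\rangle^2\,dx - \lambda_1(q-1)\left(\intO\varphi_1^q\,dx\right)^{\frac{p-q}{q}}\intO\varphi_1^{q-2}w^2\,dx.
\end{equation}
Standard Taylor-type inequalities for $|\cdot|^p$ and $|\cdot|^q$ (both usable because $p,q \geq 2$) bound the remainder by $C\intO|\nabla w|^p\,dx$. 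Combining with a coercivity estimate $Q(w) \geq c\|w\|_\D^2 := c\intO|\nabla\varphi_1|^{p-2}|\nabla w|^2\,dx$ for $w \in S$ gives $J(\varphi_1+w) \geq c'\|w\|_\D^2 - C\intO|\nabla w|^p\,dx$; the $u^\parallel=0$ case applied to $w$ then provides an independent $\intO|\nabla w|^p\,dx$ from the left that absorbs the negative error, and rescaling recovers \eqref{eq:improvedFriedrichs} for general $u^\parallel \neq 0$.

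\textbf{Main obstacle.} The technical heart of the proof is the coercivity of $Q$ on $S$. I would prove it by contradiction in the weighted Hilbert space $\D$: a quasi-minimizing sequence $w_n \in S$ with $\|w_n\|_\D = 1$ and $Q(w_n) \to 0$ admits a weak limit $w_* \in S$, and a Lagrange-multiplier analysis of the limit identifies it as a critical point of the linearization of \eqref{eq:varphi-solution} at $\varphi_1$ with eigenvalue $\lambda_1(q-1)(\intO\varphi_1^q\,dx)^{(p-q)/q}$. Using $p \geq q$ and the uniqueness of $\varphi_1$, the corresponding eigenspace is exhausted by $\mathbb{R}\varphi_1$, so $w_* \in S \cap \mathbb{R}\varphi_1 = \{0\}$, contradicting the strong convergence of the lower-order term $\intO\varphi_1^{q-2}w_n^2\,dx$. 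Assumption \ref{A} enters precisely here: $\varphi_1 \in C^{1,\beta}(\overline\Omega)$ ensures $\|\cdot\|_\D$ is a well-defined norm, and the Hopf boundary lemma gives $|\nabla\varphi_1|>0$ near $\partial\Omega$; together with a weighted Hardy-type inequality these yield the compactness of $\D \cap S \hookrightarrow L^2(\Omega;\varphi_1^{q-2}\,dx)$ that is needed to upgrade the weak limit argument into a quantitative spectral gap.
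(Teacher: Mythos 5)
Your outline correctly identifies the second variation and the need for a compactness/nondegeneracy argument, but it has two genuine gaps. First, the case split ``$u^\parallel=0$ versus $u^\parallel\neq 0$'' does not isolate the hard region. After normalizing $u^\parallel=1$ and writing $u=\varphi_1+w$ with $w\in S:=\{v\in\W:\ \intO\varphi_1^{q-1}v\,dx=0\}$, the function $w$ may be arbitrarily large, and your Taylor-plus-coercivity argument can at best work when $\|\nabla w\|_p$ is small; for large $w$ the remainder dominates the quadratic form and you give no argument. The sentence ``the $u^\parallel=0$ case applied to $w$ then provides an independent $\intO|\nabla w|^p\,dx$ from the left'' is not a valid step: $J$ is not superadditive, and the positivity of $\|\nabla w\|_p^p-\lambda_1\|w\|_q^p$ gives no lower bound on $J[\varphi_1+w]$. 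This is exactly why the paper splits along the cones $\mathcal{C}_\gamma=\{\|\nabla u^\perp\|_p\leqslant\gamma|u^\parallel|\}$ and $\mathcal{C}_\gamma'$, handling the far cone by a separate compactness argument (Lemmas~\ref{lem:Cgamma-prime} and~\ref{lem:outside}, together with the auxiliary bounds \eqref{eq:lowernwew} and \eqref{eq:lowernwewD}); your subspace gap on $S$ is only the slice $u^\parallel=0$ of that cone and covers none of the functions with $u^\parallel\neq0$ and comparatively large $u^\perp$.

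Second, even in the small-$w$ regime the expansion scheme fails as stated. Discarding the Taylor remainder as an error of size $C\intO|\nabla w|^p\,dx$ leaves $J[\varphi_1+w]\geqslant c\|w\|_{\varphi_1}^2-C\|\nabla w\|_p^p$, and the wrong-sign term cannot be absorbed: there is no inequality $\|\nabla w\|_p^p\leqslant C\|w\|_{\varphi_1}^2$, not even under the constraint $\|\nabla w\|_p\leqslant\gamma$, because the weight $|\nabla\varphi_1|^{p-2}$ degenerates at the critical points of $\varphi_1$ (bumps concentrated near an interior critical point have $\|w\|_{\varphi_1}\to0$ while $\|\nabla w\|_p$ stays fixed). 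Moreover, the right-hand side of \eqref{eq:improvedFriedrichs} requires a \emph{positive} multiple of $\|\nabla u^\perp\|_p^p$, which a purely quadratic form such as your $Q$ can never supply. The cure, and the core of the paper's proof in $\mathcal{C}_\gamma$, is to keep the Taylor remainder in exact integral form: the resulting functional $\mathcal{P}_1(t,v)$ satisfies the two-term lower bound \eqref{eq:P1est}, $\mathcal{P}_1(t,v)\geqslant C\,(\|v\|_{\varphi_1}^2+|t|^{p-2}\|\nabla v\|_p^p)$, and one then proves the gap $\lambda_1<\widetilde{\Lambda}_{\gamma_0}$ for the nonquadratic quotient $\mathcal{P}_1(1,\cdot)/\mathcal{P}_0(1,\cdot)$ on $\{\|\nabla v\|_p\leqslant\gamma_0\}\cap S$ (Proposition~\ref{prop:lambda<Lambda}). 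This is strictly stronger than coercivity of the linearized form $Q_0$ on $S$, and its proof requires the renormalization $w_n=v_n/t_n$ and the embedding results of Lemma~\ref{lem:embed} to justify the limits \eqref{eq:convP0} and \eqref{eq:convP1}. Your ``main obstacle'' paragraph addresses only the coercivity of $Q_0$ on $S$, which is indeed in the spirit of Lemmas~\ref{lem:mu=lambda} and~\ref{lem:minimizers} combined with the compact embedding $\mathcal{D}_{\varphi_1}\hookrightarrow L^2(\Omega)$, but it does not close either of the two gaps above.
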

	
	In the norm notation, the inequality \eqref{eq:improvedFriedrichs} can be written as
	\begin{equation}\label{eq:improvedFriedrichs-norm}
		\|\nabla u\|_p^p - \lambda_1 \|u\|_q^p 
		\geqslant
		C \left(
		|u^\parallel|^{p-2} \|u^\perp\|_{\varphi_1}^2 + \|\nabla u^\perp\|_p^p
		\right),
	\end{equation}
	where $\|\cdot\|_r$ denotes the usual norm in $L^r(\Omega)$, $r \in [1,\infty)$, i.e., $\|u\|_r^r = \int_\Omega |u|^r \, dx$, and the norm $\|\cdot\|_{\varphi_1}$ is introduced in \eqref{eq:normD} below.
	Lemma~\ref{lem:embed}~\ref{lem:embed:0} can be used to estimate $\|u^\perp\|_{\varphi_1}$ from below by $\|u^\perp\|_{\kappa}$ for some $\kappa > 2$, which might be convenient in applications.
	
	We remark that, in view of the decomposition \eqref{eq:decomp2}, the right-hand side of \eqref{eq:improvedFriedrichs}
	is $0$-homogeneous with respect to $\varphi_1$
	and $p$-homogeneous with respect to $u$.
	In particular, the constant $C$ does not depend on the normalization of $\varphi_1$.
	
	In the case $p=q=2$, the improved Friedrichs inequality \eqref{eq:improvedFriedrichs} coincides with \eqref{eq:Friedrichs-linear} up to a nonquantified constant $C$.
	In the case $p=q > 2$, \eqref{eq:improvedFriedrichs} has the same form as the improved Poincar\'e inequality \eqref{eq:improvedFriedrichs-Takac} from \cite{takac}, but we emphasize that \eqref{eq:improvedFriedrichs-Takac} is formulated in terms of the decomposition \eqref{eq:decomp2-Takac} which slightly differs from \eqref{eq:decomp2}, see Section~\ref{sec:general} for a further generalization.
	We also note that \cite{takac} asks for stronger regularity assumptions on $\Omega$ than \ref{A}, see \cite[(\textbf{H1}) and (\textbf{H2}), pp.~956-957]{takac}.
	While the assumption \cite[(\textbf{H1})]{takac} is essentially equivalent to \ref{A} (see a discussion in the proof of Lemma~\ref{lem:embed}~\ref{lem:embed:0} below), the assumption \cite[(\textbf{H2})]{takac} is less constructive and  was shown to be valid only when $N=1$ or when $\partial \Omega$ is connected (in the case $N \geqslant 2$). 
	At the same time, it was conjectured in \cite[Section~2.1]{takac-only} that \cite[(\textbf{H2})]{takac} is always satisfied provided \cite[(\textbf{H1})]{takac} holds.
	Recent embedding results from \cite{brasco-lind} (which are based on \cite{sciunz}) yield an affirmative answer to this conjecture, see Remark~\ref{rem:H2} below. 
	For convenience, we update the statement of \cite[Theorem~3.2]{takac} accordingly.
	\begin{theorem}[\cite{takac}]\label{thm:main-Takac}
		Let $p> 2$ and \ref{A} be satisfied.
		Then there exists $C=C(p,\Omega)>0$ such that \eqref{eq:improvedFriedrichs-Takac} holds
		for any $u \in W_0^{1,p}(\Omega)$.
	\end{theorem}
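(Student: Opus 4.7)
The plan is to derive the theorem directly from \cite[Theorem~3.2]{takac} by verifying that its two structural hypotheses \cite[(\textbf{H1}),(\textbf{H2})]{takac} both follow from the single assumption \ref{A}; the conclusion \eqref{eq:improvedFriedrichs-Takac} is then precisely the output of that theorem.

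Checking (\textbf{H1}) amounts to collecting facts already recorded in the excerpt. The $C^{1,\alpha}$-regularity of $\partial\Omega$ supplied by \ref{A} permits the invocation of Lieberman's global regularity theorem \cite{Lieberman}, yielding $\varphi_1 \in C^{1,\beta}(\overline{\Omega})$. The strong maximum principle and Hopf-type boundary point lemma from \cite{PS2} give $\varphi_1 > 0$ in $\Omega$ and $|\nabla \varphi_1| > 0$ on $\partial\Omega$, respectively. These are exactly the constituents of (\textbf{H1}).

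The crux is the verification of (\textbf{H2}), which asks for the strict spectral gap
$$\inf\left\{\frac{\intO |\nabla \varphi_1|^{p-2} |\nabla v|^2 \,dx}{\lambda_1 \intO \varphi_1^{p-2} v^2 \,dx}:~ v \in \W \setminus \{0\},~ \intO \varphi_1 v\,dx = 0\right\} > 1,$$
equivalently, positivity of the second variation of $\|\nabla \cdot\|_p^p - \lambda_1 \|\cdot\|_p^p$ at $\varphi_1$ on the $L^2$-orthogonal complement of $\mathbb{R}\varphi_1$. Previous work could secure this strict inequality only when $N=1$ or when $\partial\Omega$ is connected. As foreshadowed in the excerpt, the recent embedding results of Brasco–Lindgren \cite{brasco-lind}, which rest on Sciunzi's sharp gradient bounds \cite{sciunz}, show that the Hilbert space associated with the weighted quadratic form $\intO |\nabla\varphi_1|^{p-2}|\nabla v|^2\,dx$ embeds compactly into $L^2(\Omega, \varphi_1^{p-2}\,dx)$. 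This compactness renders the associated eigenvalue problem discrete; the first eigenvalue equals $1$ and is attained by $v=\varphi_1$, and its simplicity — inherited from the uniqueness of $\varphi_1$ modulo scaling \cite{otani2,kawohl,nazarov} — forces the second eigenvalue, i.e., the infimum above, to be strictly greater than $1$.

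With (\textbf{H1}) and (\textbf{H2}) in place, the Taylor expansion argument of \cite[Theorem~3.2]{takac} applies verbatim: one expands $J(u) := \|\nabla u\|_p^p - \lambda_1 \|u\|_p^p$ around $\tilde{u}^\parallel \varphi_1$, the first-order term vanishing by \eqref{eq:varphi-solution} with $q=p$, the quadratic term controlled from below by the spectral gap from (\textbf{H2}), and the higher-order remainders estimated by the inequality $|\xi+\eta|^p - |\xi|^p - p|\xi|^{p-2}\langle \xi,\eta\rangle \geqslant c_p(|\xi|^{p-2}|\eta|^2 + |\eta|^p)$ valid for $\xi,\eta \in \mathbb{R}^N$ and $p\geqslant 2$. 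The main obstacle is plainly the verification of (\textbf{H2}): the weight $|\nabla\varphi_1|^{p-2}$ degenerates at critical points of $\varphi_1$ and on $\partial\Omega$, and quantifying this degeneration so as to obtain the required compactness is precisely the content of \cite{brasco-lind,sciunz}.
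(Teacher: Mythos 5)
Your reduction to \cite[Theorem~3.2]{takac} is the right strategy (it is also the paper's), but the way you verify the missing hypothesis contains a genuine gap. First, \textbf{(H2)} in \cite{takac} (equivalently \cite{takac-only}) is not the spectral-gap inequality you wrote down: it is the structural assumption on the weighted space $\mathcal{D}_{\varphi_1}$ that no function of the form $\varphi_1\chi_S$, with $S$ measurable and $0<|S|<|\Omega|$, belongs to $\mathcal{D}_{\varphi_1}$. The spectral gap is a \emph{consequence} proved inside \cite{takac} (their Lemma~5.2), and its proof is exactly where (\textbf{H2}) is used. The paper closes the gap by verifying (\textbf{H2}) itself: the embedding $\mathcal{D}_{\varphi_1}\hookrightarrow W_0^{1,\theta}(\Omega)$ of Lemma~\ref{lem:embed}~\ref{lem:embed:2} (from \cite{brasco-lind,FNSS}) shows that an element of $\mathcal{D}_{\varphi_1}$ is a genuine Sobolev function and hence cannot coincide with $\varphi_1\chi_S$ (cf.\ Remark~\ref{rem:H2} and \cite[Theorem~2, p.~164]{EG}); then \cite[Theorem~3.2]{takac} applies verbatim.

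Second, your direct verification of the gap does not go through as written. Compactness of $\mathcal{D}_{\varphi_1}\hookrightarrow L^2(\Omega,\varphi_1^{p-2}dx)$ gives attainment/discreteness, but the decisive claim --- that the bottom eigenvalue of the \emph{linearized} weighted problem is simple, ``inherited from the uniqueness of $\varphi_1$ modulo scaling'' --- is not a valid deduction. Uniqueness of the minimizer of the nonlinear Rayleigh quotient does not formally transfer to nondegeneracy of the second variation; the possible obstruction is precisely a null direction of the form $c\,\varphi_1\chi_S$, and ruling it out is the content of \cite[Proposition~4.4]{takac-only} (which assumes (\textbf{H2})) or of \cite[Proposition~3.5]{brasco-lind}, whose proof needs the $W_0^{1,\theta}$-embedding together with a Picone-type inequality (compare the paper's Lemma~\ref{lem:minimizers}). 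Without this ingredient your argument only shows that the infimum is attained, not that it exceeds $1$. A minor further point: under \ref{A} the boundary is only $C^{1,\alpha}$, so the interior-ball Hopf lemma of \cite{PS2} does not apply directly; the boundary-point property $\partial\varphi_1/\partial\nu<0$ needed for (\textbf{H1}) is taken from \cite{melkshah} in the paper.
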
	
	This result allows to weaken  assumptions on $\partial \Omega$ imposed, e.g., in \cite[Theorem~2.6]{BT1}, \cite[Theorem~1.11]{BobkovTanaka2021}, and in other related results, see a discussion in \cite[Remark~5]{BT1} and \cite[Remark~1.13]{BobkovTanaka2021}.
	
	Our proof of Theorem \ref{thm:main} is based on the strategy developed in \cite{takac}.
	Namely, we divide the consideration into two cases, by considering the following two function cones.
	For a fixed $\gamma \in (0,\infty)$,  we define a cone ``around'' $\mathbb{R}\varphi_1$:
	\begin{equation}\label{eq:coneC}
		\mathcal{C}_\gamma
		=
		\{ 
		u \in \W:~ \|\nabla u^\perp\|_p \leqslant \gamma |u^\parallel| \},
	\end{equation}
	and the complementary cone 
	\begin{equation}\label{eq:coneCprime}
		\mathcal{C}_\gamma'
		=
		\{ 
		u \in \W:~ \|\nabla u^\perp\|_p \geqslant \gamma |u^\parallel| \}.
	\end{equation}
	The proof of the inequality \eqref{eq:improvedFriedrichs} in the complementary cone $\mathcal{C}_\gamma'$, given in Section~\ref{sec:easy}, is rather straightforward thanks to the fact that functions from  $\mathcal{C}_\gamma'$ are ``far'' from $\mathbb{R}\varphi_1$.
	The proof in the cone $\mathcal{C}_\gamma$ is more subtle since we are ``close'' to $\mathbb{R}\varphi_1$, 
	and we deal with this case in Section~\ref{sec:hard}.
	It requires working with the linearization of the $p$-Laplacian, and we provide corresponding results in Section \ref{sec:prelim}.

	\subsection{Generalization}\label{sec:general} 
	Let us discuss a generalization of 
	Theorems~\ref{thm:main} and \ref{thm:main-Takac}, as well as of the inequality \eqref{eq:Friedrichs-linear}, suggested by the fact that the right-hand sides of the improved inequalities \eqref{eq:improvedFriedrichs-Takac} and \eqref{eq:improvedFriedrichs} are structurally similar. 
	It is not hard to observe that the decompositions given by \eqref{eq:decomp2-Takac} and \eqref{eq:decomp2} provide particular examples of the direct sum decomposition of $\W$:
	\begin{equation}\label{eq:decompW}
		\W 
		= \mathbb{R}\varphi_1 
		\oplus 
		\mathrm{Ker}(l),
	\end{equation}
	where $l:\W \to \mathbb{R}$ is a bounded linear functional  
	satisfying $l[\varphi_1] \neq 0$, and $\mathrm{Ker}(l)$ is its kernel.
	More precisely, in the case of \eqref{eq:decomp2-Takac}, we have $l[u]= \intO \varphi_1 u \,dx$, and in the case of \eqref{eq:decomp2}, we have $l[u]= \intO \varphi_1^{q-1} u \,dx$.
	Clearly, there are many similar functionals, and a particular choice may depend on an application.
	
	Our second main theorem generalizes Theorems~\ref{thm:main} and \ref{thm:main-Takac} for the decomposition \eqref{eq:decompW}.	
	\begin{theorem}\label{thm:gen}
		Let $p \geqslant q \geqslant 2$. 
		Let \ref{A} be satisfied provided $p>2$.
		Let $l:\W \to \mathbb{R}$ be a bounded linear functional such that $l[\varphi_1] = 1$, and 
		let $P:\W \to \mathrm{Ker}(l)$ be the corresponding projection operator:
		\begin{equation*}
			Pu = u - l[u]\varphi_1.
		\end{equation*}
		Then there exists $C=C(p,q,\Omega,l)>0$ such that
		\begin{equation}\label{eq:improvedFriedrichs-gen}
			\|\nabla u\|_p^p - \lambda_1 \|u\|_q^p
			\geqslant
			C\left(
			\big| l[u]\big |^{p-2} \|Pu\|_{\varphi_1}^2
			+
			\|\nabla Pu\|_p^p
			\right)
		\end{equation}
		for any $u \in W_0^{1,p}(\Omega)$.	
	\end{theorem}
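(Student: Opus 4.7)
The plan is to reduce Theorem~\ref{thm:gen} to Theorem~\ref{thm:main} (when $p>2$) and to \eqref{eq:Friedrichs-linear} (when $p=q=2$) by comparing the $l$-decomposition $u = l[u]\varphi_1 + Pu$ with the canonical decomposition $u = u^\parallel\varphi_1 + u^\perp$ of \eqref{eq:decomp2}. Applying $l$ to the canonical decomposition and using $l[\varphi_1]=1$ yields the algebraic identities
\[
l[u] = u^\parallel + l[u^\perp], \qquad Pu = u^\perp - l[u^\perp]\varphi_1.
\]
The boundedness of $l$ on $\W$ gives $|l[u^\perp]| \leqslant C_l \|\nabla u^\perp\|_p$, while testing \eqref{eq:varphi-solution} against $u^\perp$ (for which $\intO \varphi_1^{q-1} u^\perp\,dx = 0$) kills the cross term in the expansion of $\|Pu\|_{\varphi_1}^2$ and produces the Pythagorean-type identity
\[
\|Pu\|_{\varphi_1}^2 = \|u^\perp\|_{\varphi_1}^2 + |l[u^\perp]|^2 \|\nabla\varphi_1\|_p^p.
\]

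Combining these ingredients with the elementary bound $|l[u]|^{p-2}\leqslant C_p(|u^\parallel|^{p-2}+|l[u^\perp]|^{p-2})$ and the H\"older estimate $\|u^\perp\|_{\varphi_1}^2 \leqslant \|\nabla\varphi_1\|_p^{p-2}\|\nabla u^\perp\|_p^2$ reduces the right-hand side of \eqref{eq:improvedFriedrichs-gen} to a sum dominated by $|u^\parallel|^{p-2}\|u^\perp\|_{\varphi_1}^2$, $\|\nabla u^\perp\|_p^p$, and a residual cross-term of the form $|u^\parallel|^{p-2}|l[u^\perp]|^2$. For $p=q=2$ the weight $|l[u]|^{p-2}$ is trivial and $\|\cdot\|_{\varphi_1}=\|\nabla\cdot\|_2$; the same algebraic manipulation then reduces the inequality to a direct consequence of \eqref{eq:Friedrichs-linear}.

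The main obstacle for $p>2$ is the residual cross-term $|u^\parallel|^{p-2}|l[u^\perp]|^2$, which is not controlled by a single application of Theorem~\ref{thm:main}. To handle it I would follow the cone strategy of the paper itself, but with cones defined via $(l[u], Pu)$. In the complementary cone $\{\|\nabla Pu\|_p \geqslant \gamma|l[u]|\}$ one has $|l[u]|^{p-2}\|Pu\|_{\varphi_1}^2 \leqslant C_\gamma\|\nabla Pu\|_p^p \leqslant C\|\nabla u^\perp\|_p^p$, and Theorem~\ref{thm:main} suffices. In the ``close'' cone $\{\|\nabla Pu\|_p \leqslant \gamma|l[u]|\}$ one redoes the linearization of $\|\nabla u\|_p^p - \lambda_1\|u\|_q^p$ at $l[u]\varphi_1$: the first-order term vanishes for every $v\in\W$ by \eqref{eq:varphi-solution}, and the quadratic form evaluated at $v=Pu$, once $Pu$ is split as $v_0 + a\varphi_1$ with $v_0\in\mathrm{Ker}(l_0)$ and $a = -l[u^\perp]$, decomposes as the canonical quadratic form on $v_0$ plus an explicit positive multiple of $a^2\|\nabla\varphi_1\|_p^p$. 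Together with the Pythagorean identity, this yields the coercivity $\geqslant C|l[u]|^{p-2}\|Pu\|_{\varphi_1}^2$, after which the higher-order remainder is absorbed as in Section~\ref{sec:hard}.
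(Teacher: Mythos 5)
Your algebraic preparation is correct and genuinely useful: the identities $l[u]=u^\parallel+l[u^\perp]$, $Pu=u^\perp-l[u^\perp]\varphi_1$, and the Pythagorean identity
\begin{equation*}
\|Pu\|_{\varphi_1}^2=\|u^\perp\|_{\varphi_1}^2+l[u^\perp]^2\,\|\nabla\varphi_1\|_p^p
\end{equation*}
(the cross term vanishes by \eqref{eq:varphi-solution} and $\intO\varphi_1^{q-1}u^\perp\,dx=0$) do isolate the single problematic term $|u^\parallel|^{p-2}\,l[u^\perp]^2$, and your treatment of the complementary cone and of the case $p=q=2$ is fine. The gap is in the ``close'' cone. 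The claim that the second-order form at $l[u]\varphi_1$, evaluated at $Pu=v_0+a\varphi_1$ with $v_0=u^\perp$ and $a=-l[u^\perp]$, ``decomposes as the canonical quadratic form on $v_0$ plus an explicit positive multiple of $a^2\|\nabla\varphi_1\|_p^p$'' is false. By \eqref{eq:PA} and $\intO\varphi_1^{q-1}v_0\,dx=0$ all mixed terms in $Q_0(v_0+a\varphi_1,v_0+a\varphi_1)$ vanish, and the pure $a^2$-coefficient equals $\tfrac{\lambda_1}{2}\bigl(\intO\varphi_1^q\,dx\bigr)^{p/q}\bigl[(p-1)-(q-1)-(p-q)\bigr]=0$, so that $Q_0(v_0+a\varphi_1,v_0+a\varphi_1)=Q_0(v_0,v_0)$. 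This degeneracy along $\mathbb{R}\varphi_1$ is forced by homogeneity ($J[t\varphi_1]\equiv 0$, cf.\ \eqref{eq:Qv>0}), so no linearization at a multiple of $\varphi_1$ can ever see the portion $a^2\|\nabla\varphi_1\|_p^p$ of $\|Pu\|_{\varphi_1}^2$; the claimed coercivity $\geqslant C\,|l[u]|^{p-2}\|Pu\|_{\varphi_1}^2$ therefore does not follow, and the residual cross-term stays uncontrolled.

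Moreover, the obstruction is not a technicality of your method. Testing \eqref{eq:improvedFriedrichs-gen} on $u=T\varphi_1+\phi$ with $\phi$ satisfying $\intO\varphi_1^{q-1}\phi\,dx=0$ fixed and $T\to\infty$, the elementary bound $|\nabla\varphi_1+st\nabla\phi|^{p-2}\leqslant C(|\nabla\varphi_1|^{p-2}+|t\nabla\phi|^{p-2})$ gives $\|\nabla u\|_p^p-\lambda_1\|u\|_q^p\leqslant C\bigl(T^{p-2}\|\phi\|_{\varphi_1}^2+\|\nabla\phi\|_p^p\bigr)$, while $|l[u]|^{p-2}\|Pu\|_{\varphi_1}^2\sim T^{p-2}\bigl(\|\phi\|_{\varphi_1}^2+l[\phi]^2\|\nabla\varphi_1\|_p^p\bigr)$ by your Pythagorean identity; letting $T\to\infty$ shows that any proof must in particular yield $|l[\phi]|\leqslant C\|\phi\|_{\varphi_1}$ on the kernel of the canonical functional, i.e.\ continuity of $l$ with respect to the much weaker norm $\|\cdot\|_{\varphi_1}$. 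Your scheme never establishes this, and it is not implied by boundedness of $l$ on $\W$ alone (the two norms are far from equivalent, since $\nabla\varphi_1$ vanishes at interior critical points of $\varphi_1$). If one does have this $\|\cdot\|_{\varphi_1}$-continuity — true for the model functionals $\intO\varphi_1 u\,dx$ and $\intO\varphi_1^{q-1}u\,dx$ via $\D\hookrightarrow L^2(\Omega)$ — then your reduction closes in a few lines with no cone analysis at all, because then $|u^\parallel|^{p-2}l[u^\perp]^2\leqslant C|u^\parallel|^{p-2}\|u^\perp\|_{\varphi_1}^2$ is already dominated by the right-hand side of Theorem~\ref{thm:main}. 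The paper proceeds quite differently: it proves that the quantities $M_l[u]$ for different admissible functionals are pairwise equivalent (Proposition~\ref{prop:equiv}) by means of the three-term decomposition $u=\alpha\varphi_1+\beta\psi+w$, Lemma~\ref{lem:trian}, and a normalization/contradiction argument, and then simply invokes Theorem~\ref{thm:main} (respectively \eqref{eq:Friedrichs-linear} when $p=q=2$), with no re-linearization of the deficit.
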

	
	The proof of Theorem~\ref{thm:gen} is placed in Section~\ref{sec:general:proof}. 
	It is worth noting that the dependence of the right-hand side of \eqref{eq:improvedFriedrichs-gen} on the exponent $q$ is, in general, hidden only in the
	constant $C$. 
	The normalization $l[\varphi_1] = 1$ is imposed solely for brevity.
	
	Let us also observe that in contrast to Theorems~\ref{thm:main} and \ref{thm:main-Takac} the formulation of Theorem~\ref{thm:gen} \textit{includes} the homogeneous linear case $p=q=2$. 
	In this case, \eqref{eq:improvedFriedrichs-gen} reads as
	\begin{equation}\label{eq:improvedPoinc1}
		\intO |\nabla u|^2 \,dx 
		- 
		\lambda_1 
		\intO u^2 \,dx
		\geqslant
		C \intO |\nabla Pu|^2 \,dx,
	\end{equation}	
	and this generalization of the improved Poincar\'e inequality \eqref{eq:Friedrichs-linear} does not seem straightforward to us since it covers the case of nonorthogonal decompositions of $W_0^{1,2}(\Omega)$.
	
	\subsection{Application to the nonlinear Fredholm alternative}
	
	In order to justify the utility of the improved Friedrichs inequality \eqref{eq:improvedFriedrichs}, we consider the following model example of the boundary value problem at resonance:
	\begin{equation}\label{eq:P}
		\left\{
		\begin{aligned}
			-\text{div}(|\nabla u|^{p-2} \nabla u) &= \lambda_1 \|u\|_q^{p-q} |u|^{q-2}u  + f 
			&&\text{in}\ \Omega, \\
			u&=0 &&\text{on}\ \partial \Omega,
		\end{aligned}
		\right.
	\end{equation}
	where $p > q \geqslant 2$ and $f \in (\mathcal{D}_{\varphi_1})^* \setminus \{0\}$.
	Here, $(\mathcal{D}_{\varphi_1})^*$ is the dual of $\mathcal{D}_{\varphi_1}$. 
	As a practical case, we see from Lemma~\ref{lem:embed}~\ref{lem:embed:0} that $L^\frac{\kappa}{\kappa-1}(\Omega) \subset (\mathcal{D}_{\varphi_1})^*$, where the inclusion is understood in the sense that for every $f \in L^\frac{\kappa}{\kappa-1}(\Omega)$ there exists $\bar{f} \in (\mathcal{D}_{\varphi_1})^*$ such that $\bar{f}[\xi] = \intO f \xi \,dx$ for all $\xi \in \mathcal{D}_{\varphi_1}$.
	
	In the case $p=q=2$, the classical Fredholm alternative asserts that \eqref{eq:P} possesses a (nonunique) solution \textit{if and only if} $f[\varphi_1]  = 0$.
	In the nonlinear case $p=q \neq 2$, the situation is drastically different. 
	In particular, the existence might occur for some $f$ with $f[\varphi_1] \neq 0$. 
	We refer to \cite{drabek,takac-lec2,takac-lec1} for an overview.
	Note also that the problem \eqref{eq:P} with $f \equiv 0$  is a nonlinear (but homogeneous) eigenvalue problem \eqref{eq:varphi-solution} corresponding to $\lambda_1$, see \cite{BrFr,FL}.
	
	The improved Friedrichs inequality \eqref{eq:improvedFriedrichs} allows to give a simple proof of the following existence result which can be regarded as a particular case of the generalized Fredholm alternative.
	\begin{theorem}\label{thm:existence}
		Let $p > q \geqslant 2$ and \ref{A} be satisfied.
		Let $f \in (\mathcal{D}_{\varphi_1})^* \setminus \{0\}$ be such that $f[\varphi_1] = 0$.
		Then \eqref{eq:P} possesses a weak solution.
	\end{theorem}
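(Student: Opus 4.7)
The plan is to obtain a weak solution of \eqref{eq:P} as a critical point (in fact, a global minimizer) of the energy functional
\begin{equation*}
J(u) = \frac{1}{p}\|\nabla u\|_p^p - \frac{\lambda_1}{p}\|u\|_q^p - f[u], \qquad u \in \W,
\end{equation*}
whose Euler--Lagrange equation is exactly the weak form of \eqref{eq:P}. The functional is well defined on $\W$: under \ref{A} we have $\varphi_1 \in C^{1,\beta}(\overline{\Omega})$, so $|\nabla \varphi_1|$ is bounded and H\"older's inequality gives $\|v\|_{\varphi_1} \leq C\|\nabla v\|_p$, i.e., a continuous embedding $\W \hookrightarrow \D$, which allows us to test $f$ against every $v\in\W$.

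Using the orthogonality $f[\varphi_1]=0$ and the Lane--Emden identity $\|\nabla\varphi_1\|_p^p = \lambda_1\|\varphi_1\|_q^p$ I would first record that $J(s\varphi_1) \equiv 0$ for every $s\in\reals$, and for the decomposition $u = u^\parallel\varphi_1 + u^\perp$ from \eqref{eq:decomp2} one has $f[u] = f[u^\perp]$. The improved Friedrichs inequality of Theorem~\ref{thm:main} then yields
\begin{equation*}
J(u) \geq \frac{C}{p}\bigl(|u^\parallel|^{p-2}\|u^\perp\|_{\varphi_1}^2 + \|\nabla u^\perp\|_p^p\bigr) - \|f\|_{(\D)^*}\|u^\perp\|_{\varphi_1}.
\end{equation*}
Two consequences follow. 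First, since $p>2$, the term $\|\nabla u^\perp\|_p^p$ combined with $\|u^\perp\|_{\varphi_1} \leq C'\|\nabla u^\perp\|_p$ provides coercivity in $u^\perp$ that is uniform in $u^\parallel$. Second, for $|u^\parallel|\geq 1$, discarding the $\|\nabla u^\perp\|_p^p$ term and minimizing the resulting quadratic in $\|u^\perp\|_{\varphi_1}$ shows that the partial infimum $m(s) := \inf\{J(s\varphi_1+v) : v\in\W,\ \intO \varphi_1^{q-1}v\,dx = 0\}$ satisfies $m(s) \geq -C''/|s|^{p-2}$, so $m(s) \to 0$ as $|s|\to\infty$.

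Next, to see that $\inf_{\W} J < 0$: since $f \neq 0$ in $(\D)^*$ there exists $w\in\W$ with $f[w]\neq 0$; setting $v_0 = w - \bigl(\intO\varphi_1^{q-1}w\,dx\bigr)\varphi_1/\bigl(\intO\varphi_1^q\,dx\bigr)$, the function $v_0$ belongs to the ``$\perp$''-subspace and satisfies $f[v_0] = f[w]\neq 0$, which may be assumed positive. Because $p>1$, $J(\varepsilon v_0) = O(\varepsilon^p) - \varepsilon f[v_0] < 0$ for small $\varepsilon>0$. Now for any minimizing sequence $u_n = s_n\varphi_1 + v_n$ the bound $J(u_n)\geq m(s_n)$, together with $m(s)\to 0$ at infinity and $\inf J<0$, forces $|s_n|$ to stay bounded; coercivity in $u^\perp$ then bounds $\|\nabla v_n\|_p$. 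A weakly convergent subsequence $u_n \rightharpoonup u^\ast$ in $\W$, combined with weak lower semicontinuity of $\|\nabla\cdot\|_p^p$, weak continuity of $\|\cdot\|_q^p$ via the compact Sobolev embedding, and weak continuity of $f$ via $\W\hookrightarrow\D$, shows that $u^\ast$ attains $\inf J$ and hence is a weak solution of \eqref{eq:P}.

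The main obstacle is exactly the degeneracy of the principal part $\frac{1}{p}\|\nabla u\|_p^p - \frac{\lambda_1}{p}\|u\|_q^p$ along the ray $\mathbb{R}\varphi_1$, which rules out any naive coercivity argument: a priori a minimizing sequence could drift to infinity along $\mathbb{R}\varphi_1$ while keeping $J$ arbitrarily close to its infimum. The quantitative $|u^\parallel|^{p-2}\|u^\perp\|_{\varphi_1}^2$ term in Theorem~\ref{thm:main} is precisely what controls this: it forces $m(s)\to 0$ at infinity, and once $\inf J<0$ has been established the minimizing sequence cannot escape along $\mathbb{R}\varphi_1$.
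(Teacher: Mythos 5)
Your proposal is correct and takes essentially the same route as the paper: direct minimization of the same energy functional, using Theorem~\ref{thm:main}, the embeddings of Lemma~\ref{lem:embed}, and the identity $f[u]=f[u^\perp]$ to rule out escape of minimizing sequences along $\mathbb{R}\varphi_1$ once $\inf E<0$ is noted. Your explicit bound $m(s)\geqslant -C|s|^{-(p-2)}$ is just a quantified version of the paper's case analysis showing $E[u]\geqslant o(1)$ as $\|\nabla u\|_p\to\infty$.
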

	The proof of Theorem \ref{thm:existence} is placed in Section~\ref{sec:application}.

	\subsection{Alternative improvements of the Friedrichs inequality}
	The inequalities 
	\eqref{eq:improvedFriedrichs-Takac}, \eqref{eq:improvedFriedrichs}, and \eqref{eq:improvedFriedrichs-gen} are not the only possible refinements of the Friedrichs (Poincar\'e, when $p=q$) inequality \eqref{eq:Friedrichs}.
	The following quantification was kindly indicated to us by \textsc{L.~Brasco} and it is a consequence of the enhanced hidden convexity  established in \cite{BPZ} (see \cite[Eq.~(2.10)]{BPZ} and also \cite[Eq.~(2), p.~178]{LL}). 
	We place the proof in Section~\ref{sec:Br}.
	\begin{theorem}\label{thm:Br}
		Let $p \geqslant q >1$ and $\Omega$ be a domain of finite measure.
		Then there exists $C=C(p,q,\Omega)>0$ such that
		\begin{equation}\label{eq:Pimpr}
			\|\nabla u\|_p^p - \lambda_1 \|u\|_q^p
			\geqslant
			C\,\frac{\|u\|_q^p}{\|\varphi_1\|_q^p} \, 
			\max_{t \in [0,1]}
			\left[
			t(1-t) \intO \mathcal{R}_p\left(\frac{\|\varphi_1\|_q}{\|u\|_q}|u|,\varphi_1;t\right) dx
			\right]
		\end{equation}	
		for any $u \in \W$, where
		\begin{equation}\label{eq:R}
			\mathcal{R}_p(v,w;t)
			=
			\left\{
			\begin{aligned}
				&\frac{|w \nabla v - v\nabla w|^p}{(1-t)v^p+t w^p},
				&&\text{if}~ p \geqslant 2,\\
				&\frac{\left(
					|w \nabla v|^2 + |v \nabla w|^2
					\right)^\frac{p-2}{2}|w \nabla v - v\nabla w|^2}{(1-t)v^p+t w^p},
				&&\text{if}~ 1 < p < 2.
			\end{aligned}
			\right.
		\end{equation}
		Moreover, in the homogeneous case $p=q$, the following simpler inequality holds:
		\begin{equation}\label{eq:improvedPoinc2}
			\intO |\nabla u|^p \,dx 
			- 
			\lambda_1 
			\intO |u|^p \,dx
			\geqslant
			C \intO \mathcal{R}_p(|u|,\varphi_1;1) \,dx.
		\end{equation}  
		In particular, if $p=q \geqslant 2$, then 
		\begin{equation}\label{eq:improvedPoinc3}
			\intO |\nabla u|^p \,dx 
			- 
			\lambda_1 
			\intO |u|^p \,dx
			\geqslant
			C \intO \left|\nabla \left(\frac{u}{\varphi_1}\right)\right|^p \varphi_1^p \,dx.
		\end{equation}
	\end{theorem}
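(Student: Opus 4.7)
The plan is to deduce \eqref{eq:Pimpr} from the \emph{enhanced hidden convexity} established in \cite[Eq.~(2.10)]{BPZ}, which asserts that for any nonnegative $v,w \in \W$ and $t \in [0,1]$ the nonlinear interpolation $\sigma_t := ((1-t)v^q + tw^q)^{1/q}$ satisfies
\begin{equation*}
  \|\nabla \sigma_t\|_p^p + C\, t(1-t) \intO \mathcal{R}_p(v,w;t)\, dx \,\leq\, (1-t)\|\nabla v\|_p^p + t \|\nabla w\|_p^p
\end{equation*}
for some $C = C(p,q) > 0$, with $\mathcal{R}_p$ given by \eqref{eq:R}. The strategy is to apply this quantitative convexity along the curve joining a rescaled copy of $u$ to $\varphi_1$, and to lower-bound $\|\nabla \sigma_t\|_p^p$ via the Friedrichs inequality \eqref{eq:Friedrichs}.

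After replacing $u$ by $|u|$ (which preserves $\|\nabla u\|_p$ and $\|u\|_q$), I would normalize by setting $\tilde u := (\|\varphi_1\|_q/\|u\|_q)\, u$, so that $\|\tilde u\|_q = \|\varphi_1\|_q$. Taking $v = \tilde u$, $w = \varphi_1$, the curve $\sigma_t$ has $L^q$-norm equal to $\|\varphi_1\|_q$ for every $t$, hence $\|\nabla \sigma_t\|_p^p \geq \lambda_1 \|\varphi_1\|_q^p = \|\nabla \varphi_1\|_p^p$ by \eqref{eq:Friedrichs}. Substituting this lower bound into the enhanced convexity and cancelling the common term $t\|\nabla\varphi_1\|_p^p$ gives, for $t \in [0,1)$,
\begin{equation*}
  (1-t)(\|\nabla \tilde u\|_p^p - \|\nabla \varphi_1\|_p^p) \,\geq\, C\, t(1-t) \intO \mathcal{R}_p(\tilde u, \varphi_1;t)\, dx.
\end{equation*}
Dividing by $(1-t)$ and using $t \geq t(1-t)$ on $[0,1]$ to restore a $(1-t)$ factor uniformly, then undoing the scaling via $\|\nabla \tilde u\|_p^p = (\|\varphi_1\|_q/\|u\|_q)^p \|\nabla u\|_p^p$ and $\|\nabla \varphi_1\|_p^p = \lambda_1 \|\varphi_1\|_q^p$, and finally taking the supremum over $t \in [0,1]$, yields exactly \eqref{eq:Pimpr}.

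In the homogeneous case $p=q$, no normalization is needed: I would apply the enhanced convexity with $v = |u|$, $w = \varphi_1$ and use $\|\nabla \sigma_t\|_p^p \geq \lambda_1 \|\sigma_t\|_p^p = (1-t)\lambda_1 \|u\|_p^p + t\|\nabla\varphi_1\|_p^p$ (from \eqref{eq:Friedrichs} combined with $\lambda_1\|\varphi_1\|_p^p = \|\nabla\varphi_1\|_p^p$). Cancelling and dividing by $(1-t)$ produces
\begin{equation*}
  \|\nabla u\|_p^p - \lambda_1 \|u\|_p^p \,\geq\, C\, t \intO \mathcal{R}_p(|u|, \varphi_1; t)\, dx \quad \text{for every } t \in [0,1).
\end{equation*}
Sending $t \to 1^-$ and invoking Fatou's lemma (which applies because the integrand is nonnegative and converges pointwise to $\mathcal{R}_p(|u|, \varphi_1; 1)$) yields \eqref{eq:improvedPoinc2}. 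Then \eqref{eq:improvedPoinc3} follows from the pointwise identity $\mathcal{R}_p(|u|, \varphi_1; 1) = \varphi_1^p |\nabla(u/\varphi_1)|^p$, which holds for $p \geq 2$ using $\varphi_1 > 0$ in $\Omega$ and $|\varphi_1 \nabla|u| - |u|\nabla\varphi_1| = |\varphi_1 \nabla u - u \nabla\varphi_1|$.

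The mildest technical point is the limit $t \to 1^-$ in the homogeneous case, where the denominator of $\mathcal{R}_p$ degenerates on $\partial\Omega$; however, the Fatou argument is harmless because the left-hand side of \eqref{eq:improvedPoinc2} is finite for $u \in \W$ and the integrand is nonnegative. Otherwise, once the enhanced hidden convexity of \cite{BPZ} is accepted as a black box, all remaining manipulations are purely algebraic.
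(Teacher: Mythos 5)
Your overall strategy is the right one and, in outline, it is the same as the paper's: apply the enhanced hidden convexity of \cite{BPZ} along the curve joining a rescaled copy of $|u|$ to $\varphi_1$, kill the curve's contribution by the Friedrichs inequality \eqref{eq:Friedrichs}, and then undo the scaling. However, there is a concrete gap in the way you state the black box. The enhanced hidden convexity of \cite[Eq.~(2.10)]{BPZ} is an inequality along the \emph{$p$-power} interpolation $\sigma_t=((1-t)v^p+t w^p)^{1/p}$, and this is visible in the remainder itself: the denominator of $\mathcal{R}_p(v,w;t)$ in \eqref{eq:R} is $(1-t)v^p+t w^p=\sigma_t^p$. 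You instead invoke it along the \emph{$q$-power} curve $\sigma_t=((1-t)v^q+t w^q)^{1/q}$ while keeping the remainder \eqref{eq:R} unchanged; that combined statement is not what the cited reference provides and would need a separate proof. The issue is not cosmetic, because your cleanest step --- ``the curve $\sigma_t$ has $L^q$-norm equal to $\|\varphi_1\|_q$ for every $t$, hence $\|\nabla\sigma_t\|_p^p\geqslant\lambda_1\|\varphi_1\|_q^p=\|\nabla\varphi_1\|_p^p$'' --- relies precisely on the $q$-power interpolation, which makes $t\mapsto\|\sigma_t\|_q^q$ affine; along the $p$-power curve this exact preservation fails.

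The repair is short and is exactly what the paper does: take the $p$-power curve (so that the remainder is literally $\mathcal{R}_p$ from \eqref{eq:R}) and use the concavity of $s\mapsto s^{q/p}$ (here the hypothesis $q\leqslant p$ enters) to get the pointwise bound $\sigma_t^q\geqslant(1-t)|w|^q+t\varphi_1^q$, hence $\|\sigma_t\|_q^q\geqslant\|w\|_q^q=\|\varphi_1\|_q^q$ under your normalization; the Friedrichs inequality then still yields $\|\nabla\sigma_t\|_p^p\geqslant\lambda_1\|\sigma_t\|_q^p\geqslant\lambda_1\|\varphi_1\|_q^p=\|\nabla\varphi_1\|_p^p$, and the rest of your algebra (cancellation of $t\|\nabla\varphi_1\|_p^p$, division by $1-t$, the bound $t\geqslant t(1-t)$, and the $p$-homogeneous rescaling) goes through verbatim and reproduces \eqref{eq:Pimpr}. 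Your treatment of the homogeneous case is unaffected by this issue, since for $p=q$ the two interpolations coincide; there your argument (exact equality for $\|\sigma_t\|_p^p$, division by $1-t$, $t\to1^-$ with Fatou, and the pointwise identity giving \eqref{eq:improvedPoinc3}) matches the paper's proof of \eqref{eq:improvedPoinc2} and \eqref{eq:improvedPoinc3}.
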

	
	We note that \eqref{eq:improvedPoinc2} is not a direct corollary of \eqref{eq:Pimpr}.
	It can be observed that the right-hand sides of the inequalities \eqref{eq:Pimpr} and \eqref{eq:improvedPoinc2} are $0$-homogeneous with respect to $\varphi_1$ and $p$-homogeneous with respect to $u$.
	Moreover, they measure the distance between $\varphi_1$ and $u$, although in a different way than in the inequalities \eqref{eq:improvedFriedrichs-Takac}, \eqref{eq:improvedFriedrichs}, and \eqref{eq:improvedFriedrichs-gen}.
	Compare also \eqref{eq:Friedrichs-linear}, \eqref{eq:improvedPoinc1}, and \eqref{eq:improvedPoinc3} (with $p=2$).
	Applications of \eqref{eq:Pimpr} and \eqref{eq:improvedPoinc2} are yet to be found.
	
	\smallskip
	By sacrificing the optimality of $\lambda_1$ in \eqref{eq:Friedrichs}, another improvements of the Poincar\'e and Friedrichs inequalities can be found as refinements of the Hardy inequality, for instance, 
	\begin{equation}\label{eq:Poin-impr}
		\intO |\nabla u|^p \,dx 
		- 
		\widehat{\lambda}
		\left(\intO |u|^q \,dx\right)^\frac{p}{q}
		\geqslant
		\frac{(N-p)^p}{p^p} \intO \frac{|u|^p}{|x|^p} \,dx,
	\end{equation} 
	see \cite[Theorem~4.1 and Extension~4.3]{BrezisVaz} for $N \geqslant 3$, $p=2$, and $1<q <2^*$, and \cite[Theorem~1]{GGM} for $1<p<N$ and $1<q \leqslant p$.
	We also refer to \cite{BEL} and references therein for a discussion about similar inequalities with the right-hand side containing a term of the form $\intO \frac{|u|^p}{\text{dist}(x,\Omega)^p} \,dx$. 
	As was noted, the value $\widehat{\lambda}$ on the left-hand side of \eqref{eq:Poin-impr} and related inequalities is compelled to satisfy $\widehat{\lambda} > \lambda_1$, which can be seen by substituting a minimizer of $\lambda_1$ in \eqref{eq:Poin-impr}.
	Moreover, unlike \eqref{eq:improvedFriedrichs-Takac},  \eqref{eq:improvedFriedrichs}, \eqref{eq:improvedFriedrichs-gen}, and \eqref{eq:Pimpr}, the right-hand side of \eqref{eq:Poin-impr} does not measure the distance between $u$ and the minimizers of $\lambda_1$. 	
	This prevents the usage of this generalization of \eqref{eq:Friedrichs} for the investigation of fine properties of 
	nonlinear problems at the resonance $\lambda_1$. 
	At the same time, \eqref{eq:Poin-impr} and its relatives are effectively used in the consideration of other problems, see, e.g., \cite{BEL,BrezisVaz,GGM} and references therein.

	\section{Preliminaries}\label{sec:prelim}
	
	Throughout this section, we always assume that $p \geqslant q \geqslant 2$ with $p>2$, and that $\Omega$ is a domain of finite measure, unless otherwise explicitly stated. 
	
	\subsection{Quadratic form}\label{sec:quad1}
	Let us introduce a functional $J \in C^2(\W,\mathbb{R})$ associated with the Friedrichs inequality \eqref{eq:Friedrichs}:
	$$
	J[u] 
	=
	\frac{1}{p} \intO |\nabla u|^p \,dx
	-
	\frac{\lambda_1}{p} \left(\intO |u|^q \,dx\right)^\frac{p}{q}.
	$$
	We know from \eqref{eq:Friedrichs} that $J[u] \geqslant 0$ for all $u \in \W$.
	Moreover, since $\varphi_1$ is a unique (modulo scaling) minimizer of $\lambda_1$, the equality $J[u]=0$ holds if and only if $u=t \varphi_1$, $t \in \mathbb{R}$.
	
	The Gateaux derivative of $J[u]$ in a direction $v \in \W$ is given by
	$$
	D J[u](v)
	=
	\intO |\nabla u|^{p-2} \langle \nabla u, \nabla v \rangle \,dx
	-
	\lambda_1 \left(\intO |u|^q \,dx\right)^\frac{p-q}{q} \intO |u|^{q-2} u v \,dx.
	$$
	In particular, by \eqref{eq:varphi-solution}, we get
	$D J[\varphi_1](v)=0$.
	
	Let us now take any $v \in \W$ and consider the function 
	$f(s) = J[\varphi_1 + sv]$, $s \in \mathbb{R}$.
	Noting that this function is of class $C^2(\mathbb{R})$, we can apply the Taylor formula with the reminder in the integral form,
	$$
	f(1) = f(0) + f'(0) + \int_0^1 f''(s) (1-s) \,ds,
	$$
	to get
	\begin{equation}\label{eq:JQ}
		J[\varphi_1 +v] = \int_0^1 D^2 J[\varphi_1 + sv](v,v) \, (1-s) \,ds,
	\end{equation}
	thanks to $f(0) = J[\varphi_1] = 0$ and $f'(0) =  DJ[\varphi_1](v) = 0$.
	The right-hand side of \eqref{eq:JQ} suggests to consider the following quadratic form for any $u,v \in W_0^{1,p}(\Omega)$:
	\begin{align*}
		Q_u(v,v)
		&=
		\int_0^1 D^2 J[\varphi_1 + su](v,v) \, (1-s) \,ds\\
		&=
		\int_0^1 \left(\intO\left<\mathbf{A}(\nabla \varphi_1+s\nabla u) \nabla v, \nabla v\right> dx \right) (1-s) \,ds
		\\
		&-
		\lambda_1
		(q-1)  \int_0^1 \left(\intO |\varphi_1 + su|^q \,dx\right)^\frac{p-q}{q}
		\left(\intO |\varphi_1+su|^{q-2} v^2
		\,dx \right) (1-s)\,ds\\
		&-
		\lambda_1
		(p-q) \int_0^1 \left(\intO |\varphi_1 + su|^q \,dx\right)^\frac{p-2q}{q}
		\left(\intO |\varphi_1+su|^{q-2} (\varphi_1+su) v \,dx \right)^2
		(1-s) \,ds.
	\end{align*}
	Here, $\mathbf{A}$ is a symmetric $N \times N$-matrix defined as
	\begin{equation}\label{eq:martxA}
		\mathbf{A}(a) = |a|^{p-2} \left(I + (p-2) \frac{a \otimes a}{|a|^2}\right),
		\quad a \in \mathbb{R}^N \setminus \{0\},
	\end{equation}
	where $a \otimes a$ is a matrix defined as $a \otimes a = (a_i a_j)_{i,j =1}^N$, and we set $\mathbf{A}(\vec{0})$ to be a zero matrix.
	The matrix $\mathbf{A}$ corresponds to the linearization of the $p$-Laplacian and it is not hard to see that
	\begin{equation}\label{eq:abb}
		|a|^{p-2} |b|^2 \leqslant 
		\langle \mathbf{A}(a)b, b\rangle \leqslant (p-1) |a|^{p-2} |b|^2
	\end{equation}
	for any $a,b \in \mathbb{R}^N$, see, e.g., \cite[Section~3]{takac-only}.
	
	Since we might have  $p<2q$, the following remark is necessary. 
	If $\intO |\varphi_1 + s_0u|^q \,dx = 0$ for some $u \in \W$ and $s_0 \in (0,1)$, then $u=-s_0^{-1} \varphi_1$ a.e.\ in $\Omega$.
	In this case, we have
	\begin{align}
		\notag
		&\int_0^1 \left(\intO |\varphi_1 + su|^q \,dx\right)^\frac{p-2q}{q}
		\left(\intO |\varphi_1+su|^{q-2} (\varphi_1+su) v \,dx \right)^2
		(1-s) \,ds
		\\
		\label{eq:int2}
		&=
		\left(\intO \varphi_1^q \,dx\right)^\frac{p-2q}{q}
		\left(\intO \varphi_1^{q-1} v \,dx \right)^2
		\int_0^1 |1-ss_0^{-1}|^{p-2} (1-s) \, ds,
	\end{align}
	where the integral over $s$ in  \eqref{eq:int2} is finite.
	That is, the quadratic form $Q_u(v,v)$ is well-defined for any $u,v \in W_0^{1,p}(\Omega)$, regardless the relation between $p$ and $2q$.
	
	Recalling from \eqref{eq:JQ} that $J[\varphi_1 +v] = Q_v(v,v)$, we deduce from \eqref{eq:Friedrichs} that
	\begin{equation}\label{eq:Qv>0}
		Q_v(v,v) \geqslant 0 \quad\text{for any}\quad v \in \W,
		\quad \text{and} \quad  
		Q_{\varphi_1}(\varphi_1,\varphi_1)=0.
	\end{equation}
	Due to the homogeneity, we have $Q_{tv}(v,v) = |t|^{-2} Q_{tv}(tv,tv) \geqslant 0$ for all $t \in \mathbb{R} \setminus \{0\}$, which yields, by the continuity,
	\begin{equation}\label{eq:Q0>0}
		Q_{0}(v,v) \geqslant 0 \quad\text{for any}\quad v \in \W,
		\quad \text{and} \quad 
		Q_0(\varphi_1,\varphi_1) = 0,
	\end{equation}
	where
	\begin{align}
		\notag
		Q_{0}(v,v)
		=
		\frac{1}{2}\intO\left<\mathbf{A}(\nabla \varphi_1) \nabla v, \nabla v\right> dx
		&-
		\frac{\lambda_1(q-1)}{2} \left(\intO \varphi_1^q \,dx\right)^\frac{p-q}{q}
		\intO \varphi_1^{q-2} v^2
		\,dx\\
		\label{eq:Q0}
		&-
		\frac{\lambda_1(p-q)}{2} \left(\intO \varphi_1^q \,dx\right)^\frac{p-2q}{q}
		\left(\intO \varphi_1^{q-1} v \,dx\right)^2.
	\end{align}

	\subsection{Weighted space and embeddings}
	Let us now discuss a natural domain for the quadratic form $Q_{0}$.
	We see from \eqref{eq:abb} that
	$$
	\|v\|_{\varphi_1}^2
	\leqslant
	\intO\left<\mathbf{A}(\nabla \varphi_1) \nabla v, \nabla v\right> dx
	\leqslant
	(p-1) \|v\|_{\varphi_1}^2
	$$
	for any $v \in \W$, 
	where
	\begin{equation}\label{eq:normD}
		\|v\|_{\varphi_1} 
		:=
		\left(
		\intO |\nabla \varphi_1|^{p-2} |\nabla v|^2 \,dx
		\right)^{\frac{1}{2}}.
	\end{equation}
	The seminorm \eqref{eq:normD} is actually a norm in $\W$, which follows, e.g., from the inequalities \eqref{eq:47takac} below, or from the fact that the critical set $\{x \in \Omega: |\nabla \varphi_1(x)| = 0\}$ has zero measure (see \cite{lou} or \cite{brasco-lind,sciunz}).
	Following \cite[Section 2.1]{takac-only}, we denote by $\D$ the completion of $\W$ with respect to this norm.
	Clearly, $\D$ is a Hilbert space with the scalar product induced by \eqref{eq:normD}.
	
	Since $C_0^\infty(\Omega)$ is dense in $\W$, it is also dense in $\mathcal{D}_{\varphi_1}$.
	Taking into account that the embedding $\D \hookrightarrow L^2(\Omega)$ is continuous (which follows from Lemma~\ref{lem:embed}~\ref{lem:embed:0} below), we conclude that $\D$ coincides with the spaces $X_0^{1,2}(\Omega; |\nabla \varphi_1|^{p-2})$ (in the notation of \cite{brasco-lind}) and $H_{0,\rho}^{1,2}(\Omega)$ with $\rho = |\nabla \varphi_1|^{p-2}$ (in the notation of \cite{sciunz}), which are defined as the completion of $C_0^{\infty}(\Omega)$ with respect to the norm $\|\cdot\|_2 + \|\cdot\|_{\varphi_1}$.
	
	In the following lemma, we collect several embedding results for the space $\D$ which are essentially based on \cite{brasco-lind,sciunz,FNSS,takac-only}. 
	We denote $r^* = \frac{Nr}{N-r}$ if $r<N$ and $r^*=\infty$ if $r \geqslant N$.
	\begin{lemma}\label{lem:embed}
		Let $p > 2$, $1<q<p^*$, and \ref{A} be satisfied.
		Then the following assertions hold:
		\begin{enumerate}[label={\rm(\roman*)}]
			\item\label{lem:embed:0} $\D \hookrightarrow L^\kappa(\Omega)$ compactly for some $\kappa \in (2,2^*)$;
			\item\label{lem:embed:2} $\D \hookrightarrow W_0^{1,\theta}(\Omega)$ continuously for some $\theta > 1$;
			\item\label{lem:embed:3} $\W \hookrightarrow \D$ continuously.
		\end{enumerate}	
	\end{lemma}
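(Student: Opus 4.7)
For \ref{lem:embed:3}, under \ref{A} the regularity results \cite{diben,otani,talk,Lieberman} cited after \eqref{eq:lambda1} give $\varphi_1 \in C^{1,\beta}(\overline{\Omega})$, hence $M := \|\nabla \varphi_1\|_\infty < \infty$. Since $p \geqslant 2$, H\"older's inequality yields, for any $v \in \W$,
\begin{equation*}
	\|v\|_{\varphi_1}^2
	\leqslant
	M^{p-2} \intO |\nabla v|^2 \,dx
	\leqslant
	M^{p-2} |\Omega|^{(p-2)/p} \|\nabla v\|_p^2,
\end{equation*}
which is the claimed continuous inclusion $\W \hookrightarrow \D$ (extended to the completion by density).

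For \ref{lem:embed:2}, I would invoke the summability result of \cite{sciunz} (see also \cite{brasco-lind}) that under \ref{A} there exists $\sigma > 0$ with $|\nabla \varphi_1|^{-\sigma} \in L^1(\Omega)$. Choosing $\theta \in (1,2)$ with $\theta(p-2)/(2-\theta) \leqslant \sigma$ and applying H\"older with conjugate exponents $2/\theta$ and $2/(2-\theta)$ to the splitting $|\nabla v|^\theta = \bigl(|\nabla v|^2|\nabla\varphi_1|^{p-2}\bigr)^{\theta/2}\,|\nabla\varphi_1|^{-\theta(p-2)/2}$ yields, for $v \in C_0^\infty(\Omega)$,
\begin{equation*}
	\intO |\nabla v|^\theta \,dx
	\leqslant
	\|v\|_{\varphi_1}^\theta \left(\intO |\nabla \varphi_1|^{-\theta(p-2)/(2-\theta)} \,dx\right)^{(2-\theta)/2}.
\end{equation*}
Density then extends this estimate to all of $\D$, giving the continuous embedding $\D \hookrightarrow W_0^{1,\theta}(\Omega)$.

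Finally, for \ref{lem:embed:0}, I would compose \ref{lem:embed:2} with the compact Rellich--Kondrachov embedding $W_0^{1,\theta}(\Omega) \hookrightarrow L^\kappa(\Omega)$, valid for every $\kappa < \theta^*$. To land in the range $\kappa \in (2,2^*)$ it suffices that $\theta^* > 2$: this is automatic when $N \leqslant 2$, and for $N \geqslant 3$ it reduces to $\theta > 2N/(N+2)$, i.e.\ to taking $\sigma$ sufficiently large in the construction of \ref{lem:embed:2}. Since $\theta < 2$, the resulting $\kappa$ necessarily satisfies $\kappa < \theta^* < 2^*$, as required.

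The main obstacle is securing $\sigma > 0$ large enough---in particular exceeding $N(p-2)/2$ when $N \geqslant 3$---so that a valid $\theta$ with $\theta^* > 2$ exists. This is a delicate summability estimate for $|\nabla\varphi_1|^{-1}$ around the critical set $\{|\nabla\varphi_1|=0\}$, and it is precisely what the recent weighted regularity theory of \cite{sciunz,brasco-lind} supplies, enabling the mild hypothesis \ref{A} to suffice in place of the more restrictive \cite[(H2)]{takac} recalled in the introduction.
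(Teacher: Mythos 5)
Your parts \ref{lem:embed:3} and \ref{lem:embed:2} are essentially sound and match the paper's route: (iii) is exactly the paper's one-line argument (boundedness of $\nabla\varphi_1$ plus H\"older), and (ii) reproduces the H\"older-splitting proof of \cite[Corollary~2.8]{brasco-lind}, which the paper simply cites (together with \cite[Lemma~1.3, p.~238]{FNSS} for $N=1$). The genuine gap is in \ref{lem:embed:0}. Your plan is to compose (ii) with Rellich--Kondrachov, which requires $\theta^*>2$, i.e.\ $\theta>\tfrac{2N}{N+2}$ for $N\geqslant 3$; by your own computation this forces the summability exponent of $|\nabla\varphi_1|^{-1}$ to exceed $\tfrac{N(p-2)}{2}$. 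The weighted regularity theory you invoke does not supply this: the Damascelli--Sciunzi estimates underlying \cite[Corollary~2.8]{brasco-lind} give $\intO |\nabla\varphi_1|^{-s}\,dx<\infty$ only for exponents $s$ of order $p-1$ (essentially $t<1$ in their estimates), independently of $N$, which caps $\theta$ strictly below $\tfrac{2(p-1)}{2p-3}$. One checks that $\tfrac{2(p-1)}{2p-3}>\tfrac{2N}{N+2}$ holds only when $(p-2)(N-2)<2$; hence for $N\geqslant 3$ and, say, $p\geqslant 4$, every admissible $\theta$ yields $\theta^*<2$ and the composition only reaches $L^\kappa$ with $\kappa<2$, never the required range $(2,2^*)$. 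The ``sufficiently large $\sigma$'' you ask for is not provided by \cite{sciunz,brasco-lind}, nor is it a plausible property of $|\nabla\varphi_1|^{-1}$ near the critical set in general; your route is complete only for $N\leqslant 2$.

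This is precisely why the paper argues differently for (i): it first obtains the \emph{continuity} of $\D\hookrightarrow L^{\sigma}(\Omega)$ for some $\sigma>2$ directly from the weighted Sobolev inequality of Damascelli--Sciunzi (\cite[Theorems~2.3 and~3.1]{sciunz}, made applicable under \ref{A} via \cite[Theorem~2.7]{brasco-lind}), i.e.\ it keeps the weight $|\nabla\varphi_1|^{p-2}$ rather than passing through an unweighted $W^{1,\theta}$-bound, and then proves compactness into $L^\kappa$ with $\kappa=\sigma/2+1\in(2,\sigma)$ by localization: near $\partial\Omega$ Hopf's lemma (valid under \ref{A} by \cite{melkshah}) makes $|\nabla\varphi_1|$ bounded away from zero, so there $\|\cdot\|_{\varphi_1}$ is equivalent to the $W_0^{1,2}$-norm; in the interior one tests \eqref{eq:varphi-solution} with $|v|^\kappa$ to get \eqref{eq:47takac} and splits according to the level sets of $|\nabla\varphi_1|$, using Rellich--Kondrachov on $\{|\nabla\varphi_1|>\eta/2\}$ (with Sard's theorem to guarantee a smooth boundary for a.e.\ $\eta$) and absorbing the contribution of $\{|\nabla\varphi_1|<\eta\}$ by choosing $\eta$ small. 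To repair your proof for $N\geqslant 3$ you would need an argument of this weighted type; the unweighted detour through $W_0^{1,\theta}(\Omega)$ cannot reach exponents above $2$.
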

	
	In fact, in the proofs of our main theorems, we use the compactness of the embedding $\D \hookrightarrow L^\kappa(\Omega)$ only in the case $\kappa = 2$.
	However, we prove the more general statement, as it is interesting on its own.
	We place the proof of Lemma \ref{lem:embed} in Appendix \ref{sec:appendix}. 
	
	\begin{remark}\label{rem:H2}
		The continuous embedding $\D \hookrightarrow W_0^{1,\theta}(\Omega)$  guarantees that no function $u \in \D$ can be of the form $u = \varphi_1 \chi_S$ a.e.\ in $\Omega$, where $S \subset \Omega$ is a (Lebesgue) measurable set satisfying $0<|S|<|\Omega|$ and $\chi_S$ is the characteristic function of $S$, as it follows, e.g., from \cite[Theorem~2, p.~164]{EG}.
		In particular, this shows that the assumption \cite[(\textbf{H2})]{takac-only} (or, equivalently, \cite[(\textbf{H2})]{takac}) is always satisfied whenever \ref{A} holds, as conjectured in  \cite[Section~2.1]{takac-only}.		
	\end{remark}

	\subsection{Characterization of \texorpdfstring{$\lambda_1$}{lambda1} via the quadratic form}
	
	It can be shown exactly as in the proof of \cite[Proposition 3.5]{brasco-lind} that
	any sequence $\{v_n\} \subset C_0^\infty(\Omega)$ converging in $\mathcal{D}_{\varphi_1}$ to some $v \in \mathcal{D}_{\varphi_1}$ satisfies
	\begin{equation}\label{eq:conv-to-min}
		\lim\limits_{n \to \infty}\intO\left<\mathbf{A}(\nabla \varphi_1) \nabla v_n, \nabla v_n\right> dx
		=
		\intO\left<\mathbf{A}(\nabla \varphi_1) \nabla v, \nabla v\right> dx.
	\end{equation}
	Therefore, recalling that $C_0^\infty(\Omega)$ is dense in $\W$ and $\W$ is dense in $\mathcal{D}_{\varphi_1}$, 
	and using the continuity of the embedding 
	$\D \hookrightarrow L^2(\Omega)$ which follows from Lemma~\ref{lem:embed}~\ref{lem:embed:0} (under the assumption~\ref{A}), 
	we deduce from \eqref{eq:Q0>0} that 
	\begin{equation}\label{eq:Q0>0D}
		Q_{0}(v,v) \geqslant 0 \quad\text{for any}~ v \in \mathcal{D}_{\varphi_1}.
	\end{equation}
	Define the following critical value:
	\begin{equation}\label{eq:mu1}
		\mu_1
		=
		\inf_{v \in \mathcal{D}_{\varphi_1} \setminus \{0\}}
		\left\{
		\frac{\intO\left<\mathbf{A}(\nabla \varphi_1) \nabla v, \nabla v\right> dx}{(q-1)\left(\intO \varphi_1^q \,dx\right)^\frac{p-q}{q}
			\intO \varphi_1^{q-2} v^2
			\,dx
			+
			(p-q)\left(\intO \varphi_1^q \,dx\right)^\frac{p-2q}{q}
			\left(\intO \varphi_1^{q-1} v \,dx\right)^2}
		\right\}.
	\end{equation}
	The inequality \eqref{eq:Q0>0D} and the definition \eqref{eq:Q0} of $Q_{0}(v,v)$ immediately yield $\mu_1=\lambda_1$, which is therefore an alternative characterization of $\lambda_1$.
	We state this result explicitly.
	\begin{lemma}\label{lem:mu=lambda}
		Let $p \geqslant q \geqslant 2$ with $p>2$, and \ref{A} be satisfied. 
		Then
		$\mu_1 = \lambda_1$.
	\end{lemma}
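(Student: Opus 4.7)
The approach is to establish the two one-sided bounds $\mu_1 \geqslant \lambda_1$ and $\mu_1 \leqslant \lambda_1$ separately. Both follow essentially directly from the quadratic-form inequality \eqref{eq:Q0>0D} already established on $\mathcal{D}_{\varphi_1}$, together with the fact that $\varphi_1$ is an extremal for $Q_0$ noted in \eqref{eq:Q0>0}.

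For the lower bound, I would unpack the definition \eqref{eq:Q0} of $Q_{0}(v,v)$ and observe that $Q_0(v,v)\geqslant 0$ on $\mathcal{D}_{\varphi_1}$ is equivalent to
$$
\intO \langle \mathbf{A}(\nabla \varphi_1) \nabla v, \nabla v\rangle\, dx \;\geqslant\; \lambda_1 \left[(q-1)\Bigl(\intO \varphi_1^q\, dx\Bigr)^{\frac{p-q}{q}}\intO \varphi_1^{q-2} v^2\, dx + (p-q)\Bigl(\intO \varphi_1^q\, dx\Bigr)^{\frac{p-2q}{q}}\Bigl(\intO \varphi_1^{q-1} v\, dx\Bigr)^{2}\right].
$$
Dividing by the bracketed expression and taking the infimum over $\mathcal{D}_{\varphi_1}\setminus\{0\}$ yields $\mu_1 \geqslant \lambda_1$. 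Positivity of this denominator for $v \not\equiv 0$ is needed to make the division legitimate; this follows because $\varphi_1 > 0$ in $\Omega$ (so $\varphi_1^{q-2}$ is positive a.e.\ on $\Omega$), while Lemma~\ref{lem:embed}\ref{lem:embed:0} guarantees $v \in L^2(\Omega)$, so the first bracketed term alone controls a positive multiple of $\|v\|_2^2$ (finite because $\varphi_1$ is bounded on $\overline{\Omega}$).

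For the upper bound, I would test the quotient in \eqref{eq:mu1} with $v = \varphi_1$, which lies in $\mathcal{D}_{\varphi_1}$ by the continuous embedding $\W \hookrightarrow \mathcal{D}_{\varphi_1}$ from Lemma~\ref{lem:embed}\ref{lem:embed:3}. By the definition \eqref{eq:martxA}, the numerator reduces to $(p-1)\intO |\nabla \varphi_1|^p\, dx$, which by \eqref{eq:lambda1} equals $(p-1)\lambda_1 \bigl(\intO \varphi_1^q\, dx\bigr)^{p/q}$; the denominator collapses to $(q-1 + p-q)\bigl(\intO \varphi_1^q\, dx\bigr)^{p/q} = (p-1)\bigl(\intO \varphi_1^q\, dx\bigr)^{p/q}$. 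The quotient therefore evaluates to exactly $\lambda_1$, giving $\mu_1 \leqslant \lambda_1$, which is really just a restatement of $Q_0(\varphi_1,\varphi_1)=0$. There is no genuine obstacle in this lemma: once $Q_0 \geqslant 0$ on $\mathcal{D}_{\varphi_1}$ and the embedding properties of $\mathcal{D}_{\varphi_1}$ are in place, the statement is the Rayleigh-quotient reformulation of facts already in hand.
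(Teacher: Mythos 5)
Your proposal is correct and follows essentially the same route as the paper, which derives the lemma directly from the inequality \eqref{eq:Q0>0D} together with the definition \eqref{eq:Q0} of $Q_0$: the rearranged inequality gives $\mu_1 \geqslant \lambda_1$, while testing with $v=\varphi_1$ (equivalently, $Q_0(\varphi_1,\varphi_1)=0$) gives $\mu_1 \leqslant \lambda_1$. Your write-up merely supplies the elementary verifications (positivity and finiteness of the denominator, the evaluation of the Rayleigh quotient at $\varphi_1$) that the paper treats as immediate.
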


	One of the most essential parts in our analysis is the following nondegeneracy result showing that the functions $u = t\varphi_1$ (with $t \in \mathbb{R}$) are the only elements of $\mathcal{D}_{\varphi_1}$ satisfying $Q_0(u,u)=0$. 
	In the case $p=q>2$, this result is given by \cite[Lemma 5.2]{takac} (or, equivalently,  \cite[Proposition 4.4]{takac-only}) in combination with Remark \ref{rem:H2}, and by \cite[Proposition 3.5]{brasco-lind}.
	\begin{lemma}\label{lem:minimizers}
		Let $p \geqslant q \geqslant 2$ with $p>2$, and \ref{A} be satisfied.
		Then the set of minimizers of $\mu_1$ is exhausted by $t \varphi_1$ with $t \neq 0$.
	\end{lemma}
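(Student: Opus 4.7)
The plan is to follow the approach used for the homogeneous case $p=q>2$ in \cite{takac,brasco-lind}, modifying it to accommodate the nonhomogeneity $p>q$.

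I would first establish the identity $Q_0(\varphi_1, v) = 0$ for every $v \in \D$. This follows from \eqref{eq:varphi-solution}, the relation $\langle \mathbf{A}(\nabla\varphi_1)\nabla\varphi_1, \nabla v\rangle = (p-1)|\nabla\varphi_1|^{p-2}\langle\nabla\varphi_1, \nabla v\rangle$, and the algebraic cancellation $(p-1) = (q-1) + (p-q)$. As a consequence, for a given minimizer $u$ of $\mu_1$, writing $u = c\varphi_1 + w$ with $c = \|\varphi_1\|_q^{-q}\intO \varphi_1^{q-1} u\,dx$ yields $Q_0(w,w) = 0$ together with $\intO \varphi_1^{q-1} w\,dx = 0$. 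A direct computation using \eqref{eq:varphi-solution} also shows that the numerator and denominator of the Rayleigh quotient in \eqref{eq:mu1} split in such a way that $N(u) - \lambda_1 D(u) = N(w) - \lambda_1 D(w)$; hence $w$ is either zero (giving $u = c\varphi_1$, the desired conclusion) or itself a minimizer of $\mu_1$ subject to the orthogonality $\intO \varphi_1^{q-1} w\,dx = 0$. It therefore suffices to rule out nontrivial $u \in \D$ with $Q_0(u,u) = 0$ and $\intO \varphi_1^{q-1} u\,dx = 0$.

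For such $u$ the nonlocal term in the Euler--Lagrange equation for \eqref{eq:mu1} drops out, leaving the weighted linear eigenvalue equation
$$\intO \langle \mathbf{A}(\nabla\varphi_1)\nabla u, \nabla v\rangle\,dx = \lambda_1 (q-1)\|\varphi_1\|_q^{p-q}\intO \varphi_1^{q-2} uv\,dx,\qquad v \in \D.$$
The remaining step is to show that the first eigenvalue of this weighted problem, when restricted to the subspace $\{v \in \D : \intO \varphi_1^{q-1} v\,dx = 0\}$, is strictly greater than $\lambda_1(q-1)\|\varphi_1\|_q^{p-q}$. For $p = q$ this strict-inequality statement is provided by \cite[Lemma~5.2]{takac} (via Remark~\ref{rem:H2}) and \cite[Proposition~3.5]{brasco-lind}: their proof relies on a Picone-type identity for the linearized $p$-Laplacian operator, on the regularity $\varphi_1 \in C^{1,\beta}(\overline{\Omega})$ from \cite{Lieberman}, and on the compact embedding $\D \hookrightarrow L^2(\Omega)$ from Lemma~\ref{lem:embed}~\ref{lem:embed:0}. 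Transferring their argument to the present $p > q$ case amounts to replacing the weight $\varphi_1^{p-2}$ and eigenvalue $\lambda_1(p-1)$ therein by $\varphi_1^{q-2}$ and $\lambda_1(q-1)\|\varphi_1\|_q^{p-q}$, respectively; the assumption $q \geqslant 2$ ensures that $\varphi_1^{q-2}$ remains bounded near $\partial\Omega$ and compatible with the algebraic structure of the Picone identity.

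The main obstacle will be to faithfully transport the Picone-type identity and the simplicity argument from the homogeneous case to the mixed-homogeneity setting: while the principal part of the operator still carries the weight $|\nabla\varphi_1|^{p-2}$, the lower-order term now carries $\varphi_1^{q-2}$ with $q \leqslant p$, and one has to re-verify the pointwise algebraic inequality underlying Picone together with the compactness/positivity chain used to extract and characterize the first eigenfunction in the weighted space.
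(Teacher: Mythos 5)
Your reduction is sound and matches the first half of the paper's argument: via \eqref{eq:PA} one indeed gets $Q_0(\varphi_1,v)=0$ for all $v$, so a minimizer of $\mu_1$ outside $\mathbb{R}\varphi_1$ can be replaced by a minimizer $w\neq 0$ with $Q_0(w,w)=0$ and $\intO\varphi_1^{q-1}w\,dx=0$, and for such $w$ the nonlocal term indeed disappears. The genuine gap is the decisive last step, which you leave as a ``transfer'' of the simplicity argument of \cite[Lemma~5.2]{takac} and \cite[Proposition~3.5]{brasco-lind} and yourself flag as the main unresolved obstacle. Moreover, the transfer as you prescribe it cannot work: replacing the eigenvalue $\lambda_1(p-1)$ by $\lambda_1(q-1)\|\varphi_1\|_q^{p-q}$ is inconsistent, because by \eqref{eq:PA} the function $\varphi_1$ solves the weighted linear problem with weight $\varphi_1^{q-2}$ and eigenvalue $\lambda_1(p-1)\|\varphi_1\|_q^{p-q}$, not $\lambda_1(q-1)\|\varphi_1\|_q^{p-q}$. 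Hence the homogeneous-case structure you intend to import (first weighted eigenvalue attained precisely at $\varphi_1$, orthogonality to $\varphi_1$ forcing a strictly larger constrained value) is simply not present with your replacements, and your proposal produces no proof of the claimed strict inequality for the constrained problem.

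What you are missing is that for $p>q$ no simplicity statement and no constrained eigenvalue analysis are needed at all, and this is exactly the paper's shortcut. Testing \eqref{eq:PA} with $v_n^2/\varphi_1$, where $v_n\in C_0^\infty(\Omega)$ approximate $w$ in $\D$, applying the Picone inequality for $\mathbf{A}(\nabla\varphi_1)$ from \cite[Lemma~A.1]{brasco-lind} (Picone concerns only the principal part, so no re-verification involving the weight $\varphi_1^{q-2}$ is required), and passing to the limit via \eqref{eq:conv-to-min} and Lemma~\ref{lem:embed}~\ref{lem:embed:0}, gives
\begin{equation}
	\lambda_1(p-1)\left(\intO\varphi_1^q\,dx\right)^{\frac{p-q}{q}}\intO\varphi_1^{q-2}w^2\,dx
	\leqslant
	\intO\left<\mathbf{A}(\nabla\varphi_1)\nabla w,\nabla w\right> dx,
\end{equation}
whereas $Q_0(w,w)=0$ together with $\intO\varphi_1^{q-1}w\,dx=0$ says that the right-hand side equals $\lambda_1(q-1)\left(\intO\varphi_1^q\,dx\right)^{\frac{p-q}{q}}\intO\varphi_1^{q-2}w^2\,dx$. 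Since $\varphi_1>0$ in $\Omega$ and $w\not\equiv0$, the factor $\intO\varphi_1^{q-2}w^2\,dx$ is positive, so $(p-1)\leqslant(q-1)$, contradicting $p>q$. Until you either adopt this direct comparison or supply a complete proof of your constrained strict-inequality claim, the proposal does not establish the lemma.
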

	\begin{proof}
		Thanks to the references provided above, it is sufficient to consider only the case $p>q \geqslant 2$.
		We argue in a way similar to that from \cite[Proposition 3.5]{brasco-lind} and only the last part of our proof is different (in fact, simpler, thanks to $p>q$). 
		First, in view of \eqref{eq:conv-to-min}, 
		$\mu_1$ can be equivalently characterized through the minimization over $C_0^\infty(\Omega)$ instead of $\mathcal{D}_{\varphi_1}$, i.e.,
		$$
		\mu_1
		=
		\inf_{v \in C_0^\infty(\Omega) \setminus \{0\}}
		\left\{
		\frac{\intO\left<\mathbf{A}(\nabla \varphi_1) \nabla v, \nabla v\right> dx}{(q-1)\left(\intO \varphi_1^q \,dx\right)^\frac{p-q}{q}
			\intO \varphi_1^{q-2} v^2
			\,dx
			+
			(p-q)\left(\intO \varphi_1^q \,dx\right)^\frac{p-2q}{q}
			\left(\intO \varphi_1^{q-1} v \,dx\right)^2}
		\right\}.
		$$
		Suppose, by contradiction, that there exists a minimizer $v \in \mathcal{D}_{\varphi_1}$ of $\mu_1$ such that $v \not\in \mathbb{R}\varphi_1$.
		From the properties of quadratic forms, it is not hard to observe that any linear combination of $\varphi_1$ and $v$ also minimizes $\mu_1$.
		Alternatively, one could directly calculate $Q_0(\varphi_1 + \delta v,\varphi_1 + \delta v)$ for $\delta \in \mathbb{R}$ by noting that the chain of equalities
		\begin{align}
			\notag
			&\intO\left<\mathbf{A}(\nabla \varphi_1) \nabla \varphi_1, \nabla w\right> dx
			=
			\intO\left<\mathbf{A}(\nabla \varphi_1) \nabla w, \nabla \varphi_1\right> dx
			\\
			\label{eq:PA}
			&=
			(p-1)
			\intO |\nabla \varphi_1|^{p-2} \langle \nabla \varphi_1, \nabla w \rangle \,dx
			=
			\lambda_1 (p-1) \left(\intO \varphi_1^q \,dx\right)^\frac{p-q}{q} \intO \varphi_1^{q-1} w \, dx
		\end{align}
		holds for any $w \in \W$,
		to deduce that $Q_0(\varphi_1 + \delta v,\varphi_1 + \delta v)=0$.
		As a consequence, 
		if $\intO \varphi_1^{q-1} v \,dx \neq 0$, then there exists $\delta_0 \neq 0$ such that $\intO \varphi_1^{q-1} (v+\delta_0 \varphi_1) \,dx = 0$.
		Therefore, 	
		we can assume, without loss of generality, that the minimizer $v$ satisfies 
		$Q_0(v,v)=0$ and 
		$\intO \varphi_1^{q-1} v \,dx=0$.
		
		Let $\{v_n\} \subset C_0^\infty(\Omega)$ be a sequence converging to $v$ in $\mathcal{D}_{\varphi_1}$.
		Using $v_n^2/\varphi_1$ as test functions in \eqref{eq:PA}, we apply the Picone inequality from \cite[Lemma A.1]{brasco-lind} to obtain
		\begin{align}
			\notag
			(p-1) \lambda_1 \left(\intO \varphi_1^q \,dx\right)^\frac{p-q}{q} \intO \varphi_1^{q-2} v_n^2 \, dx
			&=
			\intO\left<\mathbf{A}(\nabla \varphi_1) \nabla \varphi_1, \nabla \left(\frac{v_n^2}{\varphi_1}\right)\right> dx
			\\
			\label{eq:lem231}
			&\leqslant
			\intO\left<\mathbf{A}(\nabla \varphi_1) \nabla v_n, \nabla v_n \right> dx.
		\end{align}
		Let us now pass to the limit in \eqref{eq:lem231} as $n \to \infty$.
		The passage to limit 
		on the left-had side (i.e., the weak term in \eqref{eq:lem231}) can be performed thanks to Lemma~\ref{lem:embed}~\ref{lem:embed:0}, while the right-hand side of \eqref{eq:lem231} converges thanks to  \eqref{eq:conv-to-min}.
		Thus, since $v$ satisfies $Q_0(v,v)=0$ and $\intO \varphi_1^{q-1} v \,dx=0$, we conclude from \eqref{eq:Q0} and \eqref{eq:lem231} that
		\begin{align*}
			\lambda_1 (p-1)\left(\intO \varphi_1^q \,dx\right)^\frac{p-q}{q} \intO \varphi_1^{q-2} v^2 \, dx
			&\leqslant
			\intO\left<\mathbf{A}(\nabla \varphi_1) \nabla v, \nabla v \right> dx
			\\
			&=
			\lambda_1 (q-1)\left(\intO \varphi_1^q \,dx\right)^\frac{p-q}{q}
			\intO \varphi_1^{q-2} v^2
			\,dx.
		\end{align*}
		Since $\varphi_1>0$ in $\Omega$, we have $\intO \varphi_1^{q-2} v^2 \,dx > 0$, and hence we arrive at a contradiction to the assumption $p > q$.
	\end{proof}

	\section{Proof of Theorem \ref{thm:main}}
	As we discussed in Section~\ref{sec:intro}, the proof of Theorem \ref{thm:main} splits into two cases: when $u \in \mathcal{C}_\gamma$ (for a sufficiently small $\gamma>0$) and when $u \in \mathcal{C}_\gamma'$ (for any $\gamma>0$), where the cones $\mathcal{C}_\gamma$ and $\mathcal{C}_\gamma'$ are defined in \eqref{eq:coneC} and \eqref{eq:coneCprime}, respectively.
	The analysis in the cone $\mathcal{C}_\gamma$ is more subtle 
	than that in $\mathcal{C}_\gamma'$
	since functions from $\mathcal{C}_\gamma$ are ``close'' to the subspace $\mathbb{R}\varphi_1$, and we start the proof with this case.
	
	For convenience, throughout the proof, we denote by $C>0$ a universal constant whose value may vary from inequality to inequality, but its exact value is irrelevant for our purposes.

	\subsection{Proof in \texorpdfstring{$\mathcal{C}_\gamma$}{C-gamma}}\label{sec:hard}
	
	For brevity, we introduce the following notation:
	\begin{align}
		\label{eq:P0tv}
		\mathcal{P}_1(t,v)
		&=
		\int_0^1 \left(\intO\left<\mathbf{A}(\nabla \varphi_1+st\nabla v) \nabla v, \nabla v\right> dx \right) (1-s) \,ds,\\
		\notag
		\mathcal{P}_0(t,v)
		&=
		(q-1)  \int_0^1 \left(\intO |\varphi_1 + stv|^q \,dx\right)^\frac{p-q}{q}
		\left(\intO |\varphi_1+stv|^{q-2} v^2
		\,dx \right) (1-s)\,ds\\
		\notag
		&+
		(p-q) \int_0^1 \left(\intO |\varphi_1 + stv|^q \,dx\right)^\frac{p-2q}{q}
		\left(\intO |\varphi_1+stv|^{q-2} (\varphi_1+stv) v \,dx \right)^2
		(1-s) \,ds.
	\end{align}
	Accordingly, in the case $t=0$, we have
	\begin{align}
		\label{eq:P00v}
		\mathcal{P}_1(0,v)
		&=
		\frac{1}{2}\intO\left<\mathbf{A}(\nabla \varphi_1) \nabla v, \nabla v\right> dx, \\
		\label{eq:P01v}
		\mathcal{P}_0(0,v)
		&=
		\frac{q-1}{2} \left(\intO \varphi_1^q \,dx\right)^\frac{p-q}{q}
		\intO \varphi_1^{q-2} v^2
		\,dx+
		\frac{p-q}{2} \left(\intO \varphi_1^q \,dx\right)^\frac{p-2q}{q}
		\left(\intO \varphi_1^{q-1} v \,dx \right)^2.~~
	\end{align}
	In view of the inequalities in \eqref{eq:Qv>0} and \eqref{eq:Q0>0}, the quadratic form $Q_{tv}(v,v)$ satisfies
	\begin{equation}\label{eq:QvPP}
		Q_{tv}(v,v) = \mathcal{P}_1(t,v) - \lambda_1 \mathcal{P}_0(t,v)
		\geq 0
		\quad \text{for all}~ t \in \mathbb{R},~ v \in \W.
	\end{equation}
	
	Observe that $\mathcal{P}_0(t,v)>0$ for any $t \in \mathbb{R}$ and $v \in \W \setminus \{0\}$, cf.\ \eqref{eq:int2}.
	We will also need the following inequality given by \cite[Lemma~A.2]{takac} (see also \cite[(5.3)]{takac}):
	there exists $C>0$ such that
	\begin{align}\label{eq:P1est}
		C 
		\left(
		\intO |\nabla \varphi_1|^{p-2} |\nabla v|^2 \,dx + |t|^{p-2}\intO |\nabla v|^p \,dx
		\right)
		&\leqslant 
		\mathcal{P}_1(t,v)
		\quad \text{for all}~ t \in \mathbb{R}, v \in \W.
	\end{align}
	
	For any $\gamma>0$, we consider the minimization problem
	\begin{equation}\label{eq:tilde-lambda}
		\widetilde{\Lambda}_\gamma
		=
		\inf
		\left\{
		\frac{\mathcal{P}_1(1,v)}{\mathcal{P}_0(1,v)}:~
		v \in \W \setminus \{0\},~
		\|\nabla v\|_p \leqslant \gamma,~ \intO \varphi_1^{q-1} v \,dx =0
		\right\}.
	\end{equation}
	It is readily seen from the inequality in \eqref{eq:QvPP} that $\lambda_1 \leqslant \widetilde{\Lambda}_\gamma$.
	Moreover, $\widetilde{\Lambda}_{\gamma_1} \leqslant \widetilde{\Lambda}_{\gamma_2}$ provided $\gamma_2 \leqslant \gamma_1$, thanks to the ``monotonicity'' of the constraint $\|\nabla v\|_p \leqslant \gamma$ with respect to $\gamma$.
	The main property of $\widetilde{\Lambda}_\gamma$ needed to prove Theorem~\ref{thm:main} in $\mathcal{C}_\gamma$
	consists in the fact that $\lambda_1 < \widetilde{\Lambda}_\gamma$ for any sufficiently small $\gamma>0$. 
	More precisely, we have the following separation result.
	\begin{proposition}\label{prop:lambda<Lambda}
		Let $p \geqslant q \geqslant 2$ with $p>2$, and \ref{A} be satisfied.
		Then there exists $\gamma_0 >0$ such that 
		$\lambda_1 < \widetilde{\Lambda}_{\gamma_0}$.
	\end{proposition}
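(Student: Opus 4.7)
The plan is to argue by contradiction via a rescaling trick. Assume the conclusion fails; since $\widetilde{\Lambda}_{\gamma_1}\leqslant\widetilde{\Lambda}_{\gamma_2}$ for $\gamma_2\leqslant\gamma_1$ (as noted after \eqref{eq:tilde-lambda}), this forces $\widetilde{\Lambda}_\gamma = \lambda_1$ for every $\gamma>0$. Hence one can select $v_n \in \W\setminus\{0\}$ with $\|\nabla v_n\|_p\to 0$, $\intO \varphi_1^{q-1} v_n\,dx = 0$, and $\mathcal{P}_1(1,v_n)/\mathcal{P}_0(1,v_n)\to\lambda_1$. Setting $t_n = \|v_n\|_{\varphi_1}$ and $w_n = v_n/t_n$, the substitution $st_n\nabla w_n = s\nabla v_n$ inside the integrands of \eqref{eq:P0tv} yields $\mathcal{P}_i(1,v_n) = t_n^2\,\mathcal{P}_i(t_n,w_n)$ for $i=0,1$, so both the ratio and the linear constraint are preserved for the pair $(t_n,w_n)$. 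The continuous embedding $\W\hookrightarrow\D$ (Lemma~\ref{lem:embed}~\ref{lem:embed:3}) gives $t_n \leqslant C\|\nabla v_n\|_p\to 0$, while $\|w_n\|_{\varphi_1}=1$.

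Next I would extract a weak limit. Being bounded in the Hilbert space $\D$, up to a subsequence $w_n\rightharpoonup w$ in $\D$; Lemma~\ref{lem:embed}~\ref{lem:embed:0} upgrades this to $w_n\to w$ strongly in $L^\kappa(\Omega)$ for some $\kappa>2$, and hence in $L^2(\Omega)$. Boundedness of $\varphi_1^{q-1}$ then passes the linear constraint to the limit, so $\intO \varphi_1^{q-1} w\,dx=0$. The crucial technical step is to prove
\begin{equation*}
\mathcal{P}_1(t_n,w_n) - \mathcal{P}_1(0,w_n) \to 0 \quad\text{and}\quad \mathcal{P}_0(t_n,w_n) - \mathcal{P}_0(0,w_n) \to 0.
\end{equation*}
For the $\mathcal{P}_1$ part, I would apply the standard continuity estimates for $\mathbf{A}$ to bound the integrand difference by quantities involving $t_n|\nabla w_n|$, and use \eqref{eq:P1est} together with the ratio upper bound $\mathcal{P}_1(t_n,w_n)\leqslant(\lambda_1+o(1))\mathcal{P}_0(t_n,w_n)$ to keep the residual $t_n^{p-2}\|\nabla w_n\|_p^p$ uniformly controlled. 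For the $\mathcal{P}_0$ part, the integrand $|\varphi_1+st_nw_n|^{q-2}$ converges to $\varphi_1^{q-2}$ (trivially if $q=2$, otherwise via a bound involving $t_n^{q-2}|w_n|^{q-2}$); combined with the strong $L^2$-convergence of $w_n$ and the original smallness $\|v_n\|_q\to 0$, this produces the required vanishing.

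With these convergences in hand, weak lower semicontinuity of $\mathcal{P}_1(0,\cdot)$ on $\D$ (which is equivalent to $\tfrac{1}{2}\|\cdot\|_{\varphi_1}^2$ by \eqref{eq:abb}) and strong continuity of $\mathcal{P}_0(0,\cdot) = \tfrac{q-1}{2}\left(\intO\varphi_1^q\,dx\right)^{\frac{p-q}{q}}\intO\varphi_1^{q-2}(\cdot)^2\,dx$ on the constraint (the $(\intO\varphi_1^{q-1}w\,dx)^2$ term vanishes there) together yield $\mathcal{P}_1(0,w)\leqslant \lambda_1\,\mathcal{P}_0(0,w)$. Nontriviality of $w$ is ensured by the uniform lower bound $\mathcal{P}_1(t_n,w_n)\geqslant C\|w_n\|_{\varphi_1}^2 = C>0$ from \eqref{eq:P1est}: if $w\equiv 0$ then $\mathcal{P}_0(t_n,w_n)\to 0$ and the ratio would blow up, contradicting convergence to $\lambda_1$. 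Hence $w$ is a nontrivial minimizer of the quotient in \eqref{eq:mu1}, and Lemma~\ref{lem:mu=lambda} combined with Lemma~\ref{lem:minimizers} forces $w = t\varphi_1$ for some $t\neq 0$; but then $\intO\varphi_1^{q-1}w\,dx = t\intO\varphi_1^q\,dx\neq 0$, contradicting the preserved constraint. The main obstacle is the perturbation estimate in the middle paragraph: since $\mathcal{P}_1(t,\cdot)$ and $\mathcal{P}_0(t,\cdot)$ are not homogeneous in $v$, the smallness of $t_n$ must be leveraged against the fact that the normalization $\|w_n\|_{\varphi_1}=1$ gives control of $w_n$ only in $\D$, not in $\W$, so the residual term $t_n^{p-2}\|\nabla w_n\|_p^p$ has to be tamed indirectly through the ratio upper bound.
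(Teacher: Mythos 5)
Your overall architecture (contradiction, rescaling $w_n=v_n/t_n$, extraction of a weak limit in $\mathcal{D}_{\varphi_1}$, passage to the limiting quadratic form, and the final contradiction via Lemmas~\ref{lem:mu=lambda} and~\ref{lem:minimizers}) is the same as the paper's, up to a different normalization ($\|w_n\|_{\varphi_1}=1$ instead of $\mathcal{P}_0(t_n,w_n)=1$). The genuine gap is your ``crucial technical step'' $\mathcal{P}_1(t_n,w_n)-\mathcal{P}_1(0,w_n)\to 0$. The only estimate available for this difference is of the order $t_n^{p-2}\|\nabla w_n\|_p^p$ (for $2<p\leqslant 3$; for $p>3$ one additionally gets cross terms which, after Young's inequality, cost $\delta\|w_n\|_{\varphi_1}^2+C_\delta\, t_n^{p-2}\|\nabla w_n\|_p^p$, and $\|w_n\|_{\varphi_1}^2=1$ does not vanish). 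Your own mechanism --- the ratio upper bound $\mathcal{P}_1(t_n,w_n)\leqslant(\lambda_1+o(1))\mathcal{P}_0(t_n,w_n)$ combined with \eqref{eq:P1est} --- yields at best that the residual $t_n^{p-2}\|\nabla w_n\|_p^p=\|\nabla v_n\|_p^p/t_n^2$ is \emph{bounded}, not that it tends to zero; nothing in the minimizing-sequence hypotheses prevents a non-vanishing $W_0^{1,p}$-remnant (e.g.\ concentrating near the critical set of $\varphi_1$, where the weight $|\nabla\varphi_1|^{p-2}$ is small so that $\|\cdot\|_{\varphi_1}$ does not see it). Even the weaker one-sided inequality $\mathcal{P}_1(0,w_n)\leqslant\mathcal{P}_1(t_n,w_n)+o(1)$, which is all your scheme really needs, does not follow pointwise: on the set where $st_n\nabla w_n$ nearly cancels $\nabla\varphi_1$ the perturbed weight $|\nabla\varphi_1+st_n\nabla w_n|^{p-2}$ degenerates while $|\nabla w_n|^2$ may concentrate exactly there. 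This is precisely the delicate point flagged in the paper; its proof does not compare $\mathcal{P}_1(t_n,w_n)$ with $\mathcal{P}_1(0,w_n)$ at all, but proves directly $\liminf_n\mathcal{P}_1(t_n,w_n)\geqslant\mathcal{P}_1(0,w_0)$ (inequality against the \emph{limit} function) via Fatou, an Egorov truncation, the convexity inequalities \eqref{eq:aconvex} and \eqref{eq:lower-bound-an} linearized at $\nabla w_0$, and --- essentially --- the weak convergence of gradients in $L^\theta$ supplied by the embedding $\mathcal{D}_{\varphi_1}\hookrightarrow W_0^{1,\theta}(\Omega)$ of Lemma~\ref{lem:embed}~\ref{lem:embed:2}, an ingredient absent from your argument.

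A secondary, repairable issue: under your normalization the very boundedness of $\mathcal{P}_0(t_n,w_n)$ (needed to make the ratio bound useful and to run your nontriviality argument ``$w\equiv0\Rightarrow\mathcal{P}_0(t_n,w_n)\to0$'') is not automatic, since it involves terms like $t_n^{q-2}\|w_n\|_q^q=\|v_n\|_q^q/\|v_n\|_{\varphi_1}^2$, and $q$ may exceed the exponent $\kappa$ of Lemma~\ref{lem:embed}~\ref{lem:embed:0}; this can be salvaged by interpolating $\|v_n\|_q$ between $\|v_n\|_\kappa\leqslant C\|v_n\|_{\varphi_1}$ and $\|v_n\|_{p^*}\leqslant C\|\nabla v_n\|_p$ and absorbing via Young and \eqref{eq:P1est}, but it must be said. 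The paper's normalization $\mathcal{P}_0(t_n,w_n)=1$ sidesteps this entirely. In short: the skeleton is right, but the central limiting step for $\mathcal{P}_1$ as you state it does not go through and must be replaced by a one-sided semicontinuity argument of the paper's type (or an equivalent new idea).
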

	\begin{proof}
		Suppose, by contradiction, that 
		there exists a decreasing sequence $\{\gamma_n\} \subset (0,\infty)$ such that $\gamma_n \to 0$ and  $\lambda_1 = \widetilde{\Lambda}_{\gamma_n}$. 
		Let $\{v_n\} \subset \W \setminus\{0\}$ be a sequence which satisfies $\|\nabla v_n\|_p \to 0$,  $\intO \varphi_1^{q-1} v_n \,dx =0$, 
		and
		\begin{equation}\label{eq:convP1P2toL}
			\frac{\mathcal{P}_1(1,v_n)}{\mathcal{P}_0(1,v_n)} \to \lambda_1
			\quad \text{as}~ n \to \infty.
		\end{equation}
		The existence of such a sequence follows from the diagonal argument performed over minimizing sequences of $\widetilde{\Lambda}_{\gamma_n}$ along $n$.
		
		We pass to the normalized sequence $\{w_n\}$ consisting of functions $w_n = v_n/t_n$, where $t_n \in \mathbb{R} \setminus \{0\}$ is chosen to satisfy $\mathcal{P}_0(t_n,w_n)=1$.
		Such $t_n$ always exists since $0<\mathcal{P}_0(1,v_n)=t_n^2\mathcal{P}_0(t_n,w_n)$. 
		More precisely, $t_n^2 = \mathcal{P}_0(1,v_n)$.
		Noting that $\mathcal{P}_1(1,v_n)=t_n^2\mathcal{P}_1(t_n,w_n)$, we have $\mathcal{P}_1(t_n,w_n) \to \lambda_1$, and the ``orthogonality'' condition $\intO \varphi_1^{q-1} w_n \,dx =0$ is satisfied.
		
		Let us observe that $t_n \to 0$.
		Indeed, thanks to the convergence $\|\nabla v_n\|_p \to 0$,
		we apply the triangle inequality and the H\"older inequality to estimate $\mathcal{P}_0(1,v_n)$ from above in terms of $\|\nabla v_n\|_p$ and hence to deduce that $\mathcal{P}_0(1,v_n) \to 0$, which yields $t_n \to 0$.
		
		Since $\mathcal{P}_1(t_n,w_n) \to \lambda_1$, the inequality \eqref{eq:P1est} gives the boundedness of $\{w_n\}$ in $\mathcal{D}_{\varphi_1}$ and $\{t_n^\frac{p-2}{p} w_n\}$ in $\W$.
		Consequently, there exists $w_0 \in \D$ such that the 
		following convergences take place
		along a common subsequence of indices (see Lemma \ref{lem:embed} and the Rellich-Kondrachov theorem): 
		\begin{enumerate}[label={\rm(\roman*)}]
			\item\label{conv:1} $w_n \to w_0$ weakly in $\mathcal{D}_{\varphi_1}$ and $W_0^{1,\theta}(\Omega)$ for some $\theta >1$, and strongly in $L^2(\Omega)$.
			\item\label{conv:3} $t_n^\frac{p-2}{p} w_n \to 0$ weakly in $\W$ and strongly in $L^r(\Omega)$, $r \in (1,p^*)$. Here, the limit is zero since $t_n \to 0$ implies $t_n^\frac{p-2}{p} w_n \to 0$ strongly in $L^2(\Omega)$ by \ref{conv:1}.
			\item\label{conv:3x}
			$t_n^\frac{q-2}{q} w_n \to 0$  strongly in $L^q(\Omega)$. Indeed, applying the H\"older inequality and using \ref{conv:1} and \ref{conv:3}, we get
			\begin{align*}
				\|t_n^\frac{q-2}{q} w_n\|_{q}^q
				=
				\int_\Omega t_n^{q-2} |w_n|^q \,dx
				&\leq
				\left(
				\int_\Omega t_n^{p-2}|w_n|^p \,dx
				\right)^\frac{q-2}{p-2}
				\left(
				\int_\Omega w_n^2 \,dx
				\right)^\frac{p-q}{p-2}
				\\
				&=
				\|t_n^\frac{p-2}{p} w_n\|_p^\frac{p(q-2)}{p-2}
				\|w_n\|_2^\frac{2(p-q)}{p-2} \to 0.
			\end{align*}
			\item\label{conv:2} $t_n w_n \to 0$ strongly in $\W$ and $L^r(\Omega)$, $r \in (1,p^*)$, and a.e.\ in $\Omega$, since $v_n = t_nw_n$ and $\|\nabla v_n\|_p \to 0$.			
		\end{enumerate} 
		Moreover, we have $\intO \varphi_1^{q-1} w_0 \,dx =0$, thanks to \ref{conv:1}.

		Our aim now is to prove the following two facts:
		\begin{equation}\label{eq:convP0}
			\lim\limits_{n \to \infty}\mathcal{P}_0(t_n,w_n) = \mathcal{P}_0(0,w_0) = 1
		\end{equation}
		and
		\begin{equation}\label{eq:convP1}
			\liminf\limits_{n \to \infty}\mathcal{P}_1(t_n,w_n) \geqslant \mathcal{P}_1(0,w_0).
		\end{equation}
		We start with the convergence \eqref{eq:convP0} and recall that $\mathcal{P}_0(t_n,w_n)=1$. 
		Since $\intO \varphi_1^{q-1} w_0 \,dx =0$, we see from \eqref{eq:P01v} that
		\begin{align*}
			\mathcal{P}_0(0,w_0)
			=
			\frac{q-1}{2} \left(\intO \varphi_1^q \,dx\right)^\frac{p-q}{q}
			\intO \varphi_1^{q-2} w_0^2
			\,dx.
		\end{align*}
		At the same time, in view of the $L^q(\Omega)$-convergence given in \ref{conv:2}, the triangle inequality yields
		\begin{equation}\label{eq:phi1qconv}
			\left(\intO |\varphi_1 + st_n w_n|^q \,dx\right)^\frac{1}{q}
			\to 
			\left(\intO \varphi_1^q \,dx\right)^\frac{1}{q} > 0
		\end{equation}
		uniformly with respect to $s \in [0,1]$.
		Therefore, in order to establish \eqref{eq:convP0}, it is sufficient to prove that
		\begin{equation}\label{eq:convP0-1}
			\intO |\varphi_1+st_n w_n|^{q-2} w_n^2 \,dx
			\to 
			\intO \varphi_1^{q-2} w_0^2 \,dx
		\end{equation}
		and
		\begin{equation}\label{eq:convP0-2}
			\intO |\varphi_1+st_n w_n|^{q-2} (\varphi_1+s t_n w_n) w_n \,dx
			\to 0,
		\end{equation}
		both convergences being uniform with respect to $s \in [0,1]$.
		
		First, we justify \eqref{eq:convP0-1}.
		If $q=2$, then the convergence is given by \ref{conv:1}.
		So, assume that $q>2$.
		We have
		\begin{align}
			\notag
			&\left|\intO |\varphi_1+st_n w_n|^{q-2} w_n^2 \,dx
			-
			\intO \varphi_1^{q-2} w_0^2 \,dx\right|
			\\
			\label{eq:convP0-1:1}
			&\leqslant
			\intO \left| |\varphi_1+st_n w_n|^{q-2} - \varphi_1^{q-2}\right| w_n^2 \,dx
			+
			\intO \varphi_1^{q-2} \left|w_n^2 - w_0^2\right| \,dx.
		\end{align}
		The second integral on the right-hand side of \eqref{eq:convP0-1:1} converges to zero by \ref{conv:1} and the fact that $\varphi_1$ is bounded in $\Omega$. 
		Consider the first integral on the right-hand side of \eqref{eq:convP0-1:1}. 
		In view of the a.e.-convergence in \ref{conv:2}, Egorov's theorem asserts that for any $\varepsilon>0$ we can find a measurable subset $E_\varepsilon \subset \Omega$ such that $|E_\varepsilon| < \varepsilon$ and $t_n w_n \to 0$ uniformly in $\Omega \setminus E_\varepsilon$ and thus strongly in $L^\infty(\Omega \setminus E_\varepsilon)$.
		Consequently, $|\varphi_1+st_n w_n|^{q-2} \to \varphi_1^{q-2}$ strongly in $L^\infty(\Omega \setminus E_\varepsilon)$ and, clearly, uniformly with respect to $s \in [0,1]$.
		Therefore, decomposing
		\begin{align}
			\notag
			\intO \left| |\varphi_1+st_n w_n|^{q-2} - \varphi_1^{q-2}\right| w_n^2 \,dx
			&=
			\int_{\Omega \setminus E_\varepsilon} \left| |\varphi_1+st_n w_n|^{q-2} - \varphi_1^{q-2}\right| w_n^2 \,dx
			\\
			\label{eq:convP0-11}
			&+
			\int_{E_\varepsilon} \left| |\varphi_1+st_n w_n|^{q-2} - \varphi_1^{q-2}\right| w_n^2 \,dx
		\end{align}
		and recalling that $\{w_n\}$ is bounded in $L^2(\Omega)$ by \ref{conv:1}, we deduce that the first integral on the right-hand side of \eqref{eq:convP0-11} converges to zero uniformly with respect to $s \in [0,1]$.
		As for the second integral on the right-hand side of \eqref{eq:convP0-11}, we estimate it roughly as follows:
		\begin{equation}\label{eq:I2}
			\int_{E_\varepsilon} \left| |\varphi_1+st_n w_n|^{q-2} - \varphi_1^{q-2}\right| w_n^2 \,dx
			\leqslant 
			C  \int_{E_\varepsilon} \varphi_1^{q-2} w_n^2 \,dx
			+ 
			C \, s \int_{E_\varepsilon}
			t_n^{q-2} |w_n|^q \,dx.
		\end{equation}
		Since $\varphi_1$ is bounded in $\Omega$ and $\{w_n\}$ converges in $L^2(\Omega)$ by \ref{conv:1},
		we get
		$$
		\int_{E_\varepsilon} \varphi_1^{q-2} w_n^2 \,dx 
		=
		\int_{E_\varepsilon} \varphi_1^{q-2} w_0^2 \,dx + o(1),
		$$
		and $\int_{E_\varepsilon} \varphi_1^{q-2} w_0^2 \,dx$ converges to zero as $\varepsilon \to 0$ by the absolute continuity of the Lebesgue integral.
		The second integral on the right-hand side of \eqref{eq:I2} converges to zero as $n \to \infty$ thanks to \ref{conv:3x}.
		Summarizing, we obtain the desired convergence \eqref{eq:convP0-1} by successively passing to the limit as $n \to \infty$ and then as $\varepsilon \to 0$.
		
		Let us now discuss the convergence \eqref{eq:convP0-2}. 
		Decomposing
		\begin{equation}\label{eq:convP0-2:1}
			\intO |\varphi_1+st_n w_n|^{q-2} (\varphi_1+s t_n w_n) w_n \,dx
			=
			\intO |\varphi_1+st_n w_n|^{q-2} \varphi_1 w_n \,dx
			+
			s t_n \intO |\varphi_1+st_n w_n|^{q-2} w_n^2 \,dx,
		\end{equation}
		we see that the second integral on the right-hand side of \eqref{eq:convP0-2:1} converges to zero in view of \eqref{eq:convP0-1} since $t_n \to 0$.
		The convergence to zero of the first integral on the right-hand side of \eqref{eq:convP0-2:1} can be established in much the say way as above, by appealing to Egorov's theorem and the absolute continuity of the Lebesgue integral. We omit details.
		Combining the convergences \eqref{eq:phi1qconv}, \eqref{eq:convP0-1}, and \eqref{eq:convP0-2}, we finish the proof of the convergence \eqref{eq:convP0}.
		Also, we observe that \eqref{eq:convP0} implies $w_0 \not\equiv 0$ a.e.\ in $\Omega$. 
		
		\smallskip
		Let us turn to the justification of the weak lower semicontinuity type
		inequality \eqref{eq:convP1}:
		\noeqref{eq:convP111}
		\begin{equation}\label{eq:convP111}
			\tag{\ref{eq:convP1}}
			\liminf\limits_{n \to \infty}\mathcal{P}_1(t_n,w_n) \geqslant \mathcal{P}_1(0,w_0).
		\end{equation}
		Expanding, for convenience, $\mathcal{P}_1(t_n,w_n)$ and $\mathcal{P}_1(0,w_0)$ given by \eqref{eq:P0tv} and \eqref{eq:P00v}, respectively, we have
		\begin{align*}
			\mathcal{P}_1(t_n,w_n)
			&=
			\int_0^1
			\bigg[
			\intO|\nabla \varphi_1 + s t_n \nabla w_n|^{p-2} |\nabla w_n|^2 \,dx
			\\
			&+
			(p-2)
			\intO|\nabla \varphi_1 + s t_n \nabla w_n|^{p-4} \left<\nabla \varphi_1 + s t_n \nabla w_n, \nabla w_n \right>^2 \,dx
			\bigg] (1-s) \,ds
		\end{align*}
		and
		\begin{align*}
			\mathcal{P}_1(0,w_0)
			&=
			\frac{1}{2}
			\intO|\nabla \varphi_1|^{p-2} |\nabla w_0|^2 \,dx
			+
			\frac{p-2}{2}
			\intO|\nabla \varphi_1|^{p-4} \left<\nabla \varphi_1, \nabla w_0 \right>^2 \,dx.
		\end{align*}
		Thanks to Fatou's lemma and the superadditivity of the limit inferior, 
		the validity of \eqref{eq:convP1} will be implied by the separate validity of the following two inequalities for any fixed $s \in [0,1]$:
		\begin{equation}\label{eq:convergence1}
			\liminf_{n \to \infty}
			\intO|\nabla \varphi_1 + s t_n \nabla w_n|^{p-2} |\nabla w_n|^2 \,dx
			\geqslant
			\intO|\nabla \varphi_1|^{p-2} |\nabla w_0|^2 \,dx
		\end{equation}
		and
		\begin{equation}\label{eq:convergence2}
			\liminf_{n \to \infty}
			\intO|\nabla \varphi_1 + s t_n \nabla w_n|^{p-4} \left<\nabla \varphi_1 + s t_n \nabla w_n, \nabla w_n \right>^2 \,dx
			\geqslant
			\intO|\nabla \varphi_1|^{p-4} \left<\nabla \varphi_1, \nabla w_0 \right>^2 \,dx.
		\end{equation}
		
		Prior to the proof of \eqref{eq:convergence1} and \eqref{eq:convergence2}, let us observe that the weak convergence $w_n \to w_0$ in $W_0^{1,\theta}(\Omega)$ stated in \ref{conv:1} implies the weak convergence $\nabla w_n \to \nabla w_0$ in $L^\theta(\Omega;\mathbb{R}^N)$, since the operator $T:W_0^{1,\theta}(\Omega) \to L^\theta(\Omega;\mathbb{R}^N)$ defined as $T(u) = \nabla u$ is  linear and bounded. 
		As a consequence, $\nabla w_n - \nabla w_0 \to \vec{0}$ weakly in $L^\theta(\Omega\setminus E; \mathbb{R}^N)$ for any measurable set $E \subset \Omega$. 
		
		Also, it follows from \ref{conv:2} that $t_n \nabla w_n \to \vec{0}$ strongly in $L^p(\Omega;\mathbb{R}^N)$ and hence a.e.\ in $\Omega$.
		Thus, by Egorov's theorem, for any $\varepsilon>0$ there exists a measurable subset $E_\varepsilon \subset \Omega$ such that $|E_\varepsilon|< \varepsilon$ and $t_n \nabla w_n \to \vec{0}$ uniformly in $\Omega \setminus E_\varepsilon$ and thus strongly in $L^\infty(\Omega\setminus E_\varepsilon;\mathbb{R}^N)$.
		This yields
		\begin{equation}\label{eq:unifconv}
			\nabla \varphi_1 + s t_n \nabla w_n \to \nabla \varphi_1
			\quad \text{strongly in}~ L^\infty(\Omega\setminus E_\varepsilon; \mathbb{R}^N).
		\end{equation}
		Let us define
		\begin{equation*}
			E_{\varepsilon,k} = E_\varepsilon \cup \lbrace x \in \Omega:\, |\nabla w_0| \geqslant k \rbrace.
		\end{equation*}
		We see that $\nabla w_0$ is bounded in $\Omega \setminus E_{\varepsilon,k}$, and hence
		\begin{equation}\label{eq:conv-vec1}
			|\nabla \varphi_1 + s t_n \nabla w_n|^{p-2} \nabla w_0  \to 
			|\nabla \varphi_1|^{p-2} \nabla w_0  
			\quad \text{strongly in}~ L^\infty(\Omega\setminus E_{\varepsilon,k};\mathbb R^N),
		\end{equation}
		since $\Omega \setminus E_{\varepsilon,k} \subset \Omega \setminus E_{\varepsilon}$. 
		In particular, in view of the boundedness of $\Omega \setminus E_{\varepsilon,k}$, the H\"older inequality guarantees that the convergence \eqref{eq:conv-vec1} is also strong in $L^{\theta'}(\Omega\setminus E_{\varepsilon,k};\mathbb R^N)$.
		
		Let us now prove the inequality \eqref{eq:convergence1}.
		Since the function $a \mapsto |a|^2$ (with $a \in \mathbb{R}^N$) is convex, we have 
		\begin{equation}\label{eq:aconvex}
			|a_n|^2 \geqslant |a|^2 + 2 \langle a, a_n-a \rangle 
			\quad\text{for any}~ a_n, a \in \mathbb{R}^N.
		\end{equation}
		Substituting $a_n = \nabla w_n$ and $a = \nabla w_0$ into \eqref{eq:aconvex}, we obtain
		\begin{align}
			\notag
			&\int_{\Omega \setminus E_{\varepsilon,k}} |\nabla \varphi_1 + s t_n \nabla w_n|^{p-2} |\nabla w_n|^2 \, dx 
			\\
			\label{eq:sumtwoint1}
			&\geqslant 
			\int_{\Omega \setminus E_{\varepsilon,k}} |\nabla \varphi_1 + s t_n \nabla w_n|^{p-2} |\nabla w_0|^2 \, dx + 2 \int_{\Omega \setminus E_{\varepsilon,k}} |\nabla \varphi_1 + s t_n \nabla w_n|^{p-2} \langle \nabla w_0, \nabla w_n - \nabla w_0 \rangle \, dx.
		\end{align}
		The first integral on the right-hand side of \eqref{eq:sumtwoint1} converges to $\int_{\Omega \setminus E_{\varepsilon,k}} |\nabla \varphi_1|^{p-2} |\nabla w_0|^2 \, dx$ thanks to \eqref{eq:conv-vec1}.
		Recalling that the convergence \eqref{eq:conv-vec1} is strong also in  $L^{\theta'}(\Omega\setminus E_{\varepsilon,k};\mathbb R^N)$ and $\nabla w_n - \nabla w_0 \to \vec{0}$ weakly in $L^{\theta}(\Omega\setminus E_{\varepsilon,k};\mathbb R^N)$, we deduce that the second integral on the right-hand side of \eqref{eq:sumtwoint1} tends to zero. 
		Thus, we obtain the inequalities
		\begin{align*}
			&\liminf_{n\to \infty} \int_{\Omega} |\nabla \varphi_1 + s t_n \nabla w_n|^{p-2} |\nabla w_n|^2 \, dx
			\\
			&\geqslant
			\liminf_{n\to \infty} \int_{\Omega \setminus E_{\varepsilon,k}} |\nabla \varphi_1 + s t_n \nabla w_n|^{p-2} |\nabla w_n|^2 \, dx
			\geqslant 
			\int_{\Omega \setminus E_{\varepsilon,k}} |\nabla \varphi_1|^{p-2} |\nabla w_0|^2 \, dx.
		\end{align*}
		Passing successively to the limit as $k \to \infty$ and then as $\varepsilon \to 0$, 
		we conclude that \eqref{eq:convergence1} is satisfied.
		
		Let us now prove the inequality \eqref{eq:convergence2} using the same strategy as above.
		Define, for brevity, the vector-functions
		\begin{equation*}
			\Phi_n = |\nabla \varphi_1 + s t_n \nabla w_n|^{p-4} \left<\nabla \varphi_1 + s t_n \nabla w_n, \nabla w_0 \right> (\nabla \varphi_1 + s t_n \nabla w_n)
		\end{equation*}
		and
		\begin{equation*}
			\Phi = |\nabla \varphi_1|^{p-4} \left<\nabla \varphi_1, \nabla w_0 \right> \nabla \varphi_1.
		\end{equation*}
		Consider the function $f(x;a,b) = |x|^{p-4} \langle x, a \rangle \langle x, b \rangle$, which is $(p-2)$-homogeneous with respect to $x$. 
		It is not hard to see that  $f$ is continuous in $\mathbb{R}^{3N}$, and hence $f$ is uniformly continuous in any bounded subset of $\mathbb{R}^{3N}$.
		Componentwise, we have
		\begin{equation}
			(\Phi_n)_k 
			= 
			f(\nabla \varphi_1 + s t_n \nabla w_n; \nabla w_0, e_k) 
			\mbox{ and } 
			(\Phi)_k = f(\nabla \varphi_1; \nabla w_0, e_k),
			\quad k = 1,2,\dots,N,
		\end{equation}
		where $e_k$ is the unit $k$-th coordinate vector.
		Therefore, in view of \eqref{eq:unifconv} and the uniform continuity of $f$, we deduce that 
		$(\Phi_n)_k \to (\Phi)_k$ strongly in $L^\infty(\Omega\setminus E_{\varepsilon,k})$ for any $k = 1,2,\dots,N$, and hence 		
		$\Phi_n \to \Phi$ strongly in $L^\infty(\Omega\setminus E_{\varepsilon,k};\mathbb R^N)$. In particular, since $\Omega \setminus E_{\varepsilon,k}$ is bounded, we also have $\Phi_n \to \Phi$ strongly in $L^{\theta'}(\Omega\setminus E_{\varepsilon,k};\mathbb R^N)$.
		
		Observe that the function $a \mapsto \langle x,a\rangle^2$ is convex for any $x \in \mathbb{R}^N$, which yields the following inequality:
		\begin{equation}\label{eq:lower-bound-an}
			\langle x, a_n \rangle^2 \geqslant \langle x,a\rangle^2 + 2 \,\langle x,a\rangle \langle x,a_n-a\rangle
			\quad \text{for any}~ x, a_n, a \in \mathbb{R}^N.
		\end{equation}
		Substituting $x = \nabla \varphi_1 + s t_n \nabla w_n$, $a_n = \nabla w_n$,  and $a = \nabla w_0$ into \eqref{eq:lower-bound-an}, we obtain 
		\begin{align*}
			&\int_{\Omega \setminus E_{\varepsilon,k}} |\nabla \varphi_1 + s t_n \nabla w_n|^{p-4} 
			\langle \nabla \varphi_1 + s t_n \nabla w_n,\nabla w_n \rangle^2
			\, dx \\
			&\geqslant 
			\int_{\Omega \setminus E_{\varepsilon,k}} |\nabla \varphi_1 + s t_n \nabla w_n|^{p-4} 
			\langle \nabla \varphi_1 + s t_n \nabla w_n,\nabla w_0 \rangle^2 \, dx
			\\
			&+
			2
			\int_{\Omega \setminus E_{\varepsilon,k}} |\nabla \varphi_1 + s t_n \nabla w_n|^{p-4} 
			\langle \nabla \varphi_1 + s t_n \nabla w_n,\nabla w_0 \rangle \langle \nabla \varphi_1 + s t_n \nabla w_n,\nabla w_n - \nabla w_0 \rangle
			\, dx
			\\
			&= 
			\int_{\Omega \setminus E_{\varepsilon,k}} 
			\langle \Phi_n,\nabla w_0 \rangle  \, dx 
			+ 
			2\int_{\Omega \setminus E_{\varepsilon,k}} \langle \Phi_n,\nabla w_n - \nabla w_0 \rangle \, dx.
		\end{align*}
		From this point, we argue exactly as in the proof of \eqref{eq:convergence1} above (cf.\ \eqref{eq:sumtwoint1}) and establish the inequality \eqref{eq:convergence2}.
		We omit details.
		Combining now \eqref{eq:convergence1} and \eqref{eq:convergence2}, we finish the proof of the inequality \eqref{eq:convP1}.

		\smallskip
		Finally, recalling that $w_0 \in \mathcal{D}_{\varphi_1} \setminus \{0\}$ and $\intO \varphi_1^{q-1} w_0 \,dx = 0$, we use $w_0$ as an admissible function for the definition \eqref{eq:mu1} of $\mu_1$ and, taking into account \eqref{eq:convP0}, \eqref{eq:convP1}, and the convergence \eqref{eq:convP1P2toL}, we arrive at
		$$
		\mu_1 \leqslant \frac{\mathcal{P}_1(0,w_0)}{\mathcal{P}_0(0,w_0)}
		\leqslant
		\liminf_{n \to \infty} 
		\frac{\mathcal{P}_1(t_n,w_n)}{\mathcal{P}_0(t_n,w_n)}
		=
		\liminf_{n \to \infty} 
		\frac{\mathcal{P}_1(1,v_n)}{\mathcal{P}_0(1,v_n)} = \lambda_1 = \mu_1,
		$$
		where the last equality is given by Lemma \ref{lem:mu=lambda}. 
		Consequently, $w_0$ is a minimizer of $\mu_1$.
		However, according to Lemma \ref{lem:minimizers}, $w_0$ has to coincide with $\varphi_1$ up to a nonzero constant factor, which is impossible since $\intO \varphi_1^{q-1} w_0 \,dx = 0$.
		This contradiction finishes the proof of the claimed  inequality $\lambda_1 <\widetilde{\Lambda}_{\gamma_0}$ for a sufficiently small $\gamma_0>0$.
	\end{proof}
	
	\begin{remark}
		In the case $p=q>2$, the result of Proposition~\ref{prop:lambda<Lambda} is given by \cite[Lemma~5.2]{takac}.
		However, while the convergences \eqref{eq:convP0} and \eqref{eq:convP1} look intuitively correct, their rigorous justification appears to be delicate, cf.\  \cite[Proof of Lemma~5.2, p.~965]{takac}. 
		Our proof of \eqref{eq:convP1} relies on the continuous embedding $\D \hookrightarrow W_0^{1,\theta}(\Omega)$ provided by 
		Lemma~\ref{lem:embed}~\ref{lem:embed:2} (which is essentially due to \cite[Corollary 2.8]{brasco-lind} and \cite[Lemma~1.3, p.~238]{FNSS}, see a discussion in the proof of Lemma~\ref{lem:embed}~\ref{lem:embed:2} in Appendix \ref{sec:appendix} below) and inspired by a convexity argument from the proof of \cite[Theorem 1.1]{brasco-lind}.
		It is interesting to know if \eqref{eq:convP1} can be proved without this embedding result.
	\end{remark} 
	\begin{remark}
		Our proof of the equalities \eqref{eq:convP0} uses the assumption $q \geqslant 2$ in a principal way. 
		However, we anticipate that it is a technical assumption and \eqref{eq:convP0}, as well as Theorems~\ref{thm:main} and~\ref{thm:gen}, in general, hold true for any $q \in (1,p]$.
		Details are left for future investigation. 
	\end{remark}
	
	With the help of Proposition \ref{prop:lambda<Lambda}, we establish the result of Theorem \ref{thm:main} in the cone $\mathcal{C}_\gamma$ for  a sufficiently small $\gamma>0$.
	\begin{lemma}\label{lem:inside}
		Let $p \geqslant q \geqslant 2$ with $p>2$, and \ref{A} be satisfied.
		Let $\gamma_0>0$ be given by Proposition~\ref{prop:lambda<Lambda}.
		Then 
		there exists $C = C(\gamma_0,p,q,\Omega)>0$ such that 
		the improved Friedrichs inequality \eqref{eq:improvedFriedrichs} is satisfied for any $u \in \mathcal{C}_{\gamma_0}$.
	\end{lemma}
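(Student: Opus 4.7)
The plan is to reduce the inequality to the gap estimate provided by Proposition~\ref{prop:lambda<Lambda} via the Taylor expansion \eqref{eq:JQ} and the homogeneity of the functional $J$.

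First, I would dispense with the degenerate case $u^\parallel = 0$: the defining condition $\|\nabla u^\perp\|_p \leqslant \gamma_0 |u^\parallel|$ of $\mathcal{C}_{\gamma_0}$ then forces $u^\perp = 0$, hence $u \equiv 0$ and \eqref{eq:improvedFriedrichs} is trivial. So assume $u^\parallel \neq 0$ and set $w = u^\perp/u^\parallel \in \W$. By the defining relations \eqref{eq:decomp2} for $u^\perp$ and the cone condition, we have
\begin{equation*}
\intO \varphi_1^{q-1} w\,dx = 0 \quad\text{and}\quad \|\nabla w\|_p \leqslant \gamma_0,
\end{equation*}
so $w$ (if nonzero) is admissible in the definition \eqref{eq:tilde-lambda} of $\widetilde{\Lambda}_{\gamma_0}$.

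Next, I would exploit the $p$-homogeneity and evenness of $J$ (it depends only on $|\nabla u|^p$ and $|u|^q$) to write
\begin{equation*}
\|\nabla u\|_p^p - \lambda_1\|u\|_q^p = pJ[u] = p|u^\parallel|^p J[\varphi_1 + w],
\end{equation*}
after factoring out $u^\parallel$ and noting $u = u^\parallel(\varphi_1 + w)$. Then the expansion \eqref{eq:JQ} gives $J[\varphi_1 + w] = Q_w(w,w) = \mathcal{P}_1(1,w) - \lambda_1 \mathcal{P}_0(1,w)$. Using admissibility of $w$ in \eqref{eq:tilde-lambda} together with Proposition~\ref{prop:lambda<Lambda}, we get $\mathcal{P}_1(1,w) \geqslant \widetilde{\Lambda}_{\gamma_0}\mathcal{P}_0(1,w)$, and therefore
\begin{equation*}
\mathcal{P}_1(1,w) - \lambda_1 \mathcal{P}_0(1,w) \geqslant \left(1 - \frac{\lambda_1}{\widetilde{\Lambda}_{\gamma_0}}\right) \mathcal{P}_1(1,w) =: \delta\, \mathcal{P}_1(1,w),
\end{equation*}
with $\delta > 0$ a constant depending only on $p,q,\Omega$.

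Finally, I would apply the lower bound \eqref{eq:P1est} with $t=1$, $v=w$, yielding
\begin{equation*}
\mathcal{P}_1(1,w) \geqslant C\bigl(\|w\|_{\varphi_1}^2 + \|\nabla w\|_p^p\bigr).
\end{equation*}
Multiplying through by $p|u^\parallel|^p$ and converting back via $\|w\|_{\varphi_1}^2 = |u^\parallel|^{-2}\|u^\perp\|_{\varphi_1}^2$ and $\|\nabla w\|_p^p = |u^\parallel|^{-p}\|\nabla u^\perp\|_p^p$ produces precisely the right-hand side of \eqref{eq:improvedFriedrichs}. The whole argument is short once Proposition~\ref{prop:lambda<Lambda} is available; there is no real obstacle here, since the quantitative gap $\widetilde{\Lambda}_{\gamma_0} > \lambda_1$ does all the heavy lifting, and the remaining work is just homogeneity bookkeeping plus the elementary estimate \eqref{eq:P1est}.
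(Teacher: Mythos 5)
Your proposal is correct and follows essentially the same route as the paper: normalize by $u^\parallel$ (equivalently, use the $p$-homogeneity of $J$), identify $J[\varphi_1+w]=Q_w(w,w)=\mathcal{P}_1(1,w)-\lambda_1\mathcal{P}_0(1,w)$, invoke the gap $\lambda_1<\widetilde{\Lambda}_{\gamma_0}$ from Proposition~\ref{prop:lambda<Lambda} together with the lower bound \eqref{eq:P1est}, and scale back by $|u^\parallel|^p$. Just make the parenthetical ``(if nonzero)'' explicit: when $u^\perp\equiv 0$ (so $w=0$) the right-hand side of \eqref{eq:improvedFriedrichs} vanishes and the inequality is immediate from \eqref{eq:Friedrichs}, exactly as the paper dismisses this case.
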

	\begin{proof}
		Take any $u \in \mathcal{C}_{\gamma_0}$ and recall that this assumption reads as
		$\|\nabla u^\perp\|_p \leqslant \gamma_0 |u^\parallel|$.
		If $u^\perp \equiv 0$ a.e.\ in $\Omega$ or $u^\parallel=0$, then there is nothing to prove. 
		Assume that $u^\perp \not\equiv 0$ a.e.\ in $\Omega$ and $u^\parallel \neq 0$.
		Consider the normalized function $\omega = u/u^\parallel$. 
		We have $\omega = \varphi_1 + \omega^\perp$, where
		$\omega^\perp = u^\perp/u^\parallel$ satisfies
		$0<\|\nabla \omega^\perp\|_p \leqslant \gamma_0$  and $\intO \varphi_1^{q-1} \omega^\perp \,dx = 0$.
		That is, $\omega^\perp$ is an admissible function for the definition \eqref{eq:tilde-lambda} of $\widetilde{\Lambda}_{\gamma_0}$.
		Using our notation, we write the following chain of equalities:
		\begin{align*}
			\frac{1}{p}\intO |\nabla (\varphi_1 + \omega^\perp)|^p \,dx 
			&- 
			\frac{\lambda_1}{p} \left(\intO|\varphi_1 + \omega^\perp|^q \,dx\right)^\frac{p}{q} 
			\\
			&=
			J[\varphi_1 + \omega^\perp] 
			= Q_{\omega^\perp}(\omega^\perp,\omega^\perp)
			= 
			\mathcal{P}_1(1,\omega^\perp) 
			- \lambda_1 
			\mathcal{P}_0(1,\omega^\perp).
		\end{align*}
		Proposition~\ref{prop:lambda<Lambda} guarantees that
		\begin{align*}
			\mathcal{P}_1(1,\omega^\perp) 
			- \lambda_1 
			\mathcal{P}_0(1,\omega^\perp)
			\geqslant
			\left(1-\frac{\lambda_1}{\widetilde{\Lambda}_{\gamma_0}}\right)\mathcal{P}_1(1,\omega^\perp)
			\geqslant
			\frac{C}{p}
			\left(
			\intO |\nabla \varphi_1|^{p-2} |\nabla \omega^\perp|^2 \,dx + \intO |\nabla \omega^\perp|^p \,dx
			\right),
		\end{align*}
		where the last inequality is given by  \eqref{eq:P1est}, $C>0$ does not depend on $\omega^\perp$, and the factor $1/p$ is chosen for convenience.
		Multiplying these two displayed expressions by $p|u^\parallel|^p$, we pass back to the function $u = u^\parallel \varphi_1 + u^\perp \in \mathcal{C}_{\gamma_0}$ and finally arrive at the claimed inequality:
		$$
		\intO |\nabla u|^p \,dx 
		- 
		\lambda_1 
		\left(\intO |u|^q \,dx\right)^\frac{p}{q}
		\geqslant 
		C\left(
		|u^\parallel|^{p-2}
		\intO |\nabla \varphi_1|^{p-2} |\nabla u^\perp|^2 \,dx + \intO |\nabla u^\perp|^p \,dx
		\right).
		\qedhere
		$$
	\end{proof}

	\subsection{Proof in \texorpdfstring{$\mathcal{C}_\gamma'$}{C-gamma'}}\label{sec:easy}
	
	Let us fix any $\gamma > 0$ and define the following critical value:
	\begin{equation}\label{eq:Lambdag}
		\Lambda_\gamma
		=
		\inf
		\left\{
		\frac{\intO |\nabla u|^p \,dx}{\left(\intO |u|^q \,dx\right)^\frac{p}{q}}:~
		u \in \mathcal{C}_\gamma' \setminus \{0\}
		\right\}.
	\end{equation}
	Clearly, we have $\|\nabla u^\perp\|_p > 0$ for any $u \in \mathcal{C}_\gamma' \setminus \{0\}$, since the latter reads as $\|\nabla u^\perp\|_p \geqslant \gamma |u^\parallel|$.
	Therefore, $\Lambda_\gamma$ can be equivalently characterized as
	\begin{equation}\label{eq:Lambdag2}
		\Lambda_\gamma
		=
		\inf
		\left\{
		\frac{\intO |t \nabla \varphi_1 + \nabla v|^p \,dx}{\left(\intO |t \varphi_1 + v|^q \,dx\right)^\frac{p}{q}}:~
		(t, v) \in \mathbb{R} \times \W,~
		|t| \leqslant \gamma^{-1}, \|\nabla v\|_p = 1, \intO \varphi_1^{q-1} v \,dx = 0 
		\right\}.
	\end{equation}
	It is evident from \eqref{eq:Lambdag} that $\lambda_1 \leqslant \Lambda_\gamma$. 
	The main property of $\Lambda_\gamma$ needed for the proof of Theorem~\ref{thm:main} in $\mathcal{C}_\gamma'$ is the fact that $\lambda_1 < \Lambda_\gamma$.
	We provide
	the following slightly more general result which is valid without assuming $p>2$, $q \geqslant 2$, and the regularity \ref{A} of $\Omega$.
	\begin{lemma}\label{lem:Cgamma-prime}
		Let $p \geqslant q>1$, $\gamma>0$, and $\Omega$ be a domain of finite measure.		
		Then $\lambda_1 < \Lambda_\gamma$.
	\end{lemma}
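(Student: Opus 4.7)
The strategy is a compactness-plus-uniqueness argument using the equivalent characterization \eqref{eq:Lambdag2}. I will take a minimizing sequence for $\Lambda_\gamma$, extract a limit with the help of the compact embedding $\W \hookrightarrow L^q(\Omega)$, and show that assuming $\Lambda_\gamma = \lambda_1$ forces this limit to be a scalar multiple of $\varphi_1$, which will be incompatible with the normalization $\|\nabla v\|_p = 1$.

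First I would fix a sequence $(t_n,v_n) \in \mathbb{R} \times \W$ satisfying $|t_n| \leqslant \gamma^{-1}$, $\|\nabla v_n\|_p = 1$, $\intO \varphi_1^{q-1} v_n\,dx = 0$, and such that the ratio in \eqref{eq:Lambdag2} converges to $\Lambda_\gamma$. Setting $u_n = t_n\varphi_1 + v_n$, the triangle inequality bounds $\{u_n\}$ in $\W$. By the compactness of $\W \hookrightarrow L^q(\Omega)$ and the Banach--Alaoglu theorem, along a subsequence $t_n \to t_0$ with $|t_0| \leqslant \gamma^{-1}$, $v_n \to v_0$ weakly in $\W$ and strongly in $L^q(\Omega)$, and hence $u_n \to u_0 := t_0\varphi_1 + v_0$ weakly in $\W$ and strongly in $L^q(\Omega)$. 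The constraint $\intO \varphi_1^{q-1} v_0\,dx = 0$ persists since $\varphi_1^{q-1} \in L^{q/(q-1)}(\Omega)$.

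Now I would argue by contradiction, supposing $\Lambda_\gamma = \lambda_1$, and split into two cases. If $u_0 = 0$, then $v_0 = -t_0 \varphi_1$, and testing against $\varphi_1^{q-1}$ together with $\intO \varphi_1^q\,dx > 0$ forces $t_0 = 0$ and $v_0 = 0$; but then $\|u_n\|_q \to 0$, while $\|\nabla u_n\|_p \geqslant \|\nabla v_n\|_p - |t_n|\|\nabla \varphi_1\|_p \to 1$, so the ratio diverges to $+\infty$, contradicting its convergence to $\Lambda_\gamma < \infty$. If $u_0 \neq 0$, then $\|u_0\|_q > 0$, and weak lower semicontinuity of $\|\nabla \cdot\|_p^p$ combined with the $L^q$-strong convergence yields
\begin{equation*}
\|\nabla u_0\|_p^p \leqslant \liminf_{n \to \infty} \|\nabla u_n\|_p^p = \lambda_1 \|u_0\|_q^p.
\end{equation*}
Together with the Friedrichs inequality \eqref{eq:Friedrichs}, this forces equality, so $u_0$ is a minimizer of $\lambda_1$; by uniqueness modulo scaling, $u_0 = c\varphi_1$ for some $c \neq 0$. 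Thus $v_0 = (c - t_0)\varphi_1$, and the orthogonality $\intO \varphi_1^{q-1} v_0\,dx = 0$ gives $c = t_0$, i.e., $v_0 = 0$.

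The final step is to upgrade the weak convergence $u_n \to u_0$ to strong convergence. Equality in the weak lower semicontinuity gives $\|\nabla u_n\|_p \to \|\nabla u_0\|_p$, and by uniform convexity of $L^p(\Omega;\mathbb{R}^N)$ (for $p > 1$) and the Radon--Riesz property, $u_n \to u_0$ strongly in $\W$. Then $v_n = u_n - t_n \varphi_1 \to u_0 - t_0\varphi_1 = v_0 = 0$ strongly in $\W$, contradicting $\|\nabla v_n\|_p = 1$. The main obstacle, I expect, is not the uniqueness step (which is classical) but carefully ruling out the degenerate case $u_0 = 0$ for the minimizing sequence; the rescaling built into the characterization \eqref{eq:Lambdag2} is what makes this manageable.
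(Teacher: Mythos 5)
Your proposal is correct and follows essentially the same route as the paper: a minimizing sequence for $\Lambda_\gamma$ via the characterization \eqref{eq:Lambdag2}, compactness of $\W \hookrightarrow L^q(\Omega)$, exclusion of the degenerate limit $t_0\varphi_1+v_0\equiv 0$, weak lower semicontinuity combined with the Friedrichs inequality to identify the limit as a minimizer of $\lambda_1$, and then uniqueness of $\varphi_1$ modulo scaling played against the orthogonality constraint and the normalization $\|\nabla v_n\|_p=1$. The only differences are cosmetic (you rule out the degenerate case by letting the Rayleigh quotient blow up rather than forcing the numerator to vanish, and you invoke uniqueness before upgrading to strong convergence, whereas the paper does it in the reverse order), so no further comment is needed.
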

	\begin{proof}
		Suppose, by contradiction, that 
		$\lambda_1 = \Lambda_\gamma$.
		Then there exists a sequence $\{(t_n,v_n)\}$ which satisfies the constraints in the definition \eqref{eq:Lambdag2} of $\Lambda_\gamma$ and such that
		\begin{equation}\label{eq:prop1:conv1}
			\frac{\intO |t_n \nabla \varphi_1 + \nabla v_n|^p \,dx}{\left(\intO |t_n \varphi_1 + v_n|^q \,dx\right)^\frac{p}{q}}
			\to 
			\lambda_1
			\quad \text{as}~ n \to \infty.
		\end{equation}
		In view of the boundedness of $\{t_n\}$ and $\{\|\nabla v_n\|_p\}$, we deduce the existence of $t_0 \in [-\gamma^{-1},\gamma^{-1}]$ and $v_0 \in \W$ such that $t_n \to t_0$, and $v_n \to v_0$ weakly in $\W$ and strongly in $L^q(\Omega)$, along a common subsequence of indices.
		Thus, $t_n \varphi_1 + v_n 
		\to
		t_0 \varphi_1 + v_0$ weakly in $\W$ and strongly in $L^q(\Omega)$.
		Moreover, in view of the boundedness of $\varphi_1$ in $\Omega$, we have $\intO \varphi_1^{q-1} v_0 \, dx = 0$.
		Let us show that $t_0 \varphi_1 + v_0 \not\equiv 0$ a.e.\ in $\Omega$. 
		Suppose, by contradiction, that $v_0 = -t_0 \varphi_1$. 
		If $t_0 \neq 0$, then we get a contradiction to the ``orthogonality'' 
		$\intO \varphi_1^{q-1} v_0 \, dx = 0$ since $\varphi_1>0$ in $\Omega$.
		If $t_0 = 0$, then we deduce from \eqref{eq:prop1:conv1} and the strong convergence in $L^q(\Omega)$ that $\|\nabla v_n\|_p \to 0$, which contradicts the assumption $\|\nabla v_n\|_p=1$.
		Therefore, the function $t_0 \varphi_1 + v_0$ is nonzero.
		
		Thanks to the weak lower semicontinuity of the norm in $\W$,
		we get
		$$
		0<\intO |t_0 \nabla \varphi_1 + \nabla v_0|^p \,dx
		\leqslant
		\liminf_{n \to \infty}
		\intO |t_n \nabla \varphi_1 + \nabla v_n|^p \,dx,
		$$
		and hence the convergence \eqref{eq:prop1:conv1} and the  definition \eqref{eq:lambda1} of $\lambda_1$ yield $t_n \varphi_1 + v_n 
		\to t_0 \varphi_1 + v_0$ strongly in $\W$.
		In particular, we deduce that 
		$t_0 \varphi_1 + v_0$ is a minimizer of $\lambda_1$ and 
		$\|\nabla v_0\|_p =1$.
		Using the fact that the minimizer of $\lambda_1$ is unique modulo scaling, we obtain the existence of $s \in \mathbb{R} \setminus \{0\}$ such that $t_0 \varphi_1 + v_0 = s \varphi_1$.
		Since $v_0 \not\equiv 0$ a.e.\ in $\Omega$, we get $v_0 = (s-t_0) \varphi_1$ and $s \neq t_0$.
		However, this again contradicts the ``orthogonality'' 
		$\intO \varphi_1^{q-1} v_0 \, dx = 0$ since $\varphi_1>0$ in $\Omega$.
	\end{proof}
	
	\begin{remark}
		It is evident from the proof that Lemma~\ref{lem:Cgamma-prime} remains valid without the assumption $p \geqslant q$ provided that $1<q<p^*$ and $\varphi_1$ is a unique (modulo scaling) minimizer of $\lambda_1$.
		In the case $p=q>1$, Lemma~\ref{lem:Cgamma-prime} is given by \cite[Lemma~5.1]{takac}.
	\end{remark}

	Using Lemma \ref{lem:Cgamma-prime}, we establish the result of Theorem \ref{thm:main} in the cone $\mathcal{C}_\gamma'$ for any $\gamma>0$ and observe that it is valid for any $q \in (1,p]$.
	\begin{lemma}\label{lem:outside}
		Let $p \geqslant q > 1$ with $p>2$, and \ref{A} be satisfied.
		Then for any $\gamma>0$ there exists $C=C(\gamma,p,q,\Omega)>0$ such that the improved Friedrichs inequality \eqref{eq:improvedFriedrichs} is satisfied for any $u \in \mathcal{C}_\gamma'$.
	\end{lemma}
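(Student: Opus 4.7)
The plan is to exploit the strict separation $\lambda_1 < \Lambda_\gamma$ provided by Lemma~\ref{lem:Cgamma-prime} together with a direct upper estimate on the right-hand side of \eqref{eq:improvedFriedrichs} in terms of $\|\nabla u\|_p^p$. Since $u \in \mathcal{C}_\gamma' \setminus \{0\}$ implies $\|\nabla u\|_p^p \geqslant \Lambda_\gamma \|u\|_q^p$, we get the chain
\begin{equation*}
\|\nabla u\|_p^p - \lambda_1 \|u\|_q^p
\geqslant
\left(1 - \frac{\lambda_1}{\Lambda_\gamma}\right) \|\nabla u\|_p^p,
\end{equation*}
and the factor in parentheses is strictly positive by Lemma~\ref{lem:Cgamma-prime}. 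It remains to show that the right-hand side of \eqref{eq:improvedFriedrichs} is dominated by $C\|\nabla u\|_p^p$ whenever $u \in \mathcal{C}_\gamma'$.

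The first step is to estimate $\|u^\perp\|_{\varphi_1}^2$ using H\"older's inequality with exponents $\frac{p}{p-2}$ and $\frac{p}{2}$:
\begin{equation*}
\intO |\nabla \varphi_1|^{p-2} |\nabla u^\perp|^2 \,dx
\leqslant
\|\nabla \varphi_1\|_p^{p-2} \,\|\nabla u^\perp\|_p^2.
\end{equation*}
Combined with the defining inequality $|u^\parallel| \leqslant \gamma^{-1} \|\nabla u^\perp\|_p$ of the cone $\mathcal{C}_\gamma'$, this yields
\begin{equation*}
|u^\parallel|^{p-2} \,\|u^\perp\|_{\varphi_1}^2
\leqslant
\gamma^{-(p-2)} \|\nabla \varphi_1\|_p^{p-2} \,\|\nabla u^\perp\|_p^p,
\end{equation*}
so that the entire right-hand side of \eqref{eq:improvedFriedrichs} is bounded by a constant times $\|\nabla u^\perp\|_p^p$.

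The second step is to bound $\|\nabla u^\perp\|_p$ by $\|\nabla u\|_p$, uniformly in $u$. Since $u \mapsto u^\parallel$ is a bounded linear functional on $\W$ (the numerator is controlled via H\"older by $\|\varphi_1^{q-1}\|_{p'} \|u\|_p$ and the Friedrichs inequality \eqref{eq:Friedrichs}, while the denominator is a fixed positive number), the projection $u \mapsto u^\perp = u - u^\parallel \varphi_1$ is bounded on $\W$, hence $\|\nabla u^\perp\|_p \leqslant C_1 \|\nabla u\|_p$ with $C_1$ depending only on $p,q,\Omega$. Combining the two steps gives
\begin{equation*}
|u^\parallel|^{p-2}\|u^\perp\|_{\varphi_1}^2 + \|\nabla u^\perp\|_p^p
\leqslant C_2(\gamma,p,q,\Omega) \,\|\nabla u\|_p^p,
\end{equation*}
and substituting this into the first display yields \eqref{eq:improvedFriedrichs} with $C = (1-\lambda_1/\Lambda_\gamma)/C_2$. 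The only nontrivial ingredient here is the strict inequality $\lambda_1 < \Lambda_\gamma$, which has already been established in Lemma~\ref{lem:Cgamma-prime}; the rest consists of routine norm estimates.
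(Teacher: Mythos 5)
Your proof is correct, and it rests on the same key ingredient as the paper, namely the strict separation $\lambda_1<\Lambda_\gamma$ of Lemma~\ref{lem:Cgamma-prime} and the resulting bound $\|\nabla u\|_p^p-\lambda_1\|u\|_q^p\geqslant(1-\lambda_1/\Lambda_\gamma)\|\nabla u\|_p^p$; the difference lies in how $\|\nabla u\|_p^p$ is then compared with the right-hand side of \eqref{eq:improvedFriedrichs}. The paper normalizes by $u^\parallel$ and proves two lower bounds for $\omega^\perp=u^\perp/u^\parallel$: the estimate \eqref{eq:lowernwew}, obtained by a compactness/contradiction argument exploiting the orthogonality constraint, and the estimate \eqref{eq:lowernwewD}, obtained by a case analysis on $\|\omega^\perp\|_{\varphi_1}$ together with the embedding $\W\hookrightarrow\D$. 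You instead bound the right-hand side of \eqref{eq:improvedFriedrichs} from above by $C\|\nabla u\|_p^p$ directly: H\"older with exponents $\tfrac{p}{p-2}$ and $\tfrac{p}{2}$ (which is exactly the embedding $\W\hookrightarrow\D$) combined with the cone inequality $|u^\parallel|\leqslant\gamma^{-1}\|\nabla u^\perp\|_p$ handles the weighted term, while the boundedness of the linear functional $u\mapsto u^\parallel$, and hence of the projection $u\mapsto u^\perp$, gives $\|\nabla u^\perp\|_p\leqslant C_1\|\nabla u\|_p$. Note that after dividing by $|u^\parallel|^p$ your projection bound is precisely \eqref{eq:lowernwew}, so your argument replaces the paper's compactness step by an elementary, quantitative one with a traceable constant---a genuine, if modest, simplification; what the paper's normalized formulation buys is only that it never needs to spell out the boundedness of $l[u]=\intO\varphi_1^{q-1}u\,dx$ on $\W$. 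Two cosmetic points: treat $u\equiv 0$ separately (both sides vanish), and when bounding the numerator of $u^\parallel$ it is cleaner to apply H\"older with exponents $q$ and $q'$ and then the Friedrichs inequality \eqref{eq:Friedrichs} as stated (or invoke the standard Poincar\'e inequality in $W_0^{1,p}(\Omega)$ for $\|u\|_p$), since \eqref{eq:Friedrichs} controls $\|u\|_q$ rather than $\|u\|_p$.
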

	\begin{proof}
		Let $u \in \mathcal{C}_\gamma'$. 
		If $u \equiv 0$ a.e.\ in $\Omega$, then there is nothing to prove.
		Assuming that $u \in \mathcal{C}_\gamma' \setminus \{0\}$, we see that $u$ is an admissible function for the definition \eqref{eq:Lambdag} of $\Lambda_\gamma$, which yelds
		\begin{equation}\label{eq:lem:Cprime-1}
			\intO |\nabla u|^p \,dx
			-
			\lambda_1 \left(\intO |u|^q \,dx\right)^\frac{p}{q}
			\geqslant 
			\left(1 - \frac{\lambda_1}{\Lambda_\gamma}\right)
			\intO |\nabla u|^p \,dx.
		\end{equation}
		According to Lemma~\ref{lem:Cgamma-prime}, we have $\lambda_1< \Lambda_\gamma$. 
		We want to prove that the right-hand side of \eqref{eq:lem:Cprime-1} can be estimated from below in the same form as the right-hand side of \eqref{eq:improvedFriedrichs}.
		If $u^\parallel = 0$, then this fact is evident. 
		Assume that $u^\parallel \neq 0$.
		Dividing \eqref{eq:lem:Cprime-1} by $|u^\parallel|^p$, we get
		\begin{equation}\label{eq:lem:Cprime-2}
			\intO |\nabla (\varphi_1 + \omega^\perp)|^p \,dx
			-
			\lambda_1 \left(\intO |\varphi_1 + \omega^\perp|^q \,dx\right)^\frac{p}{q}
			\geqslant 
			\left(1 - \frac{\lambda_1}{\Lambda_\gamma}\right)
			\intO |\nabla (\varphi_1 + \omega^\perp)|^p \,dx,
		\end{equation}
		where $\omega^\perp = u^\perp/u^\parallel$ satisfies $\|\nabla \omega^\perp\|_p \geqslant \gamma$ and $\intO \varphi_1^{q-1} \omega^\perp \,dx = 0$.
		First, we show the existence of $C>0$ independent of $\omega^\perp$ such that
		\begin{equation}\label{eq:lowernwew}
			\intO |\nabla (\varphi_1 + \omega^\perp)|^p \,dx 
			\geqslant C 
			\intO |\nabla \omega^\perp|^p \,dx.
		\end{equation}
		Suppose, contrary to our claim, that there exists a sequence $\{\omega^\perp_n\} \subset \W$ satisfying $\|\nabla \omega^\perp_n\|_p \geqslant \gamma$, $\intO \varphi_1^{q-1} \omega^\perp_n \,dx = 0$, and
		\begin{equation}\label{eq:contrlem}
			\intO |\nabla (\varphi_1 + \omega^\perp_n)|^p \,dx 
			\leqslant \frac{1}{n}
			\intO |\nabla \omega^\perp_n|^p \,dx.
		\end{equation}
		It is not hard to see that $\{\|\nabla \omega^\perp_n\|_p\}$ is bounded, and hence there exists $\omega^\perp_0 \in \W$ such that $\omega^\perp_n \to \omega^\perp_0$ weakly in $\W$ and strongly in $L^q(\Omega)$, up to a subsequence. 
		In particular, we have $\intO \varphi_1^{q-1} \omega^\perp_0 \,dx = 0$. 
		We deduce from \eqref{eq:contrlem} that 
		$$
		0 \leqslant 
		\intO |\nabla (\varphi_1 + \omega^\perp_0)|^p \,dx 
		\leqslant
		\liminf_{n \to \infty} \intO |\nabla (\varphi_1 + \omega^\perp_n)|^p \,dx 
		\leqslant
		\liminf_{n \to \infty}
		\frac{1}{n}
		\intO |\nabla \omega^\perp_n|^p \,dx
		= 0.
		$$
		Consequently, $\intO |\nabla (\varphi_1 + \omega^\perp_0)|^p \,dx=0$, which leads to $\omega^\perp_0 = -\varphi_1$. 
		However, this is impossible since $\intO \varphi_1^{q-1} \omega^\perp_0 \,dx = 0$.
		This proves the lower bound \eqref{eq:lowernwew}. 
		
		Second, we show the existence of $C>0$ independent of $\omega^\perp$ such that
		\begin{equation}\label{eq:lowernwewD}
			\intO |\nabla \omega^\perp|^p \,dx
			\geqslant
			C
			\intO |\nabla \varphi_1|^{p-2} |\nabla \omega^\perp|^2 \,dx.
		\end{equation}
		We see that if $\|\omega^\perp\|_{\varphi_1} \leqslant \gamma^{p/2}$, then \eqref{eq:lowernwewD} is satisfied with $C=1$ since $\|\nabla \omega^\perp_n\|_p \geqslant \gamma$ by the choice of $u$.
		On the other hand, if $\|\omega^\perp\|_{\varphi_1} \geqslant \gamma^{p/2}$, then the continuous embedding $\W \hookrightarrow \D$ (see Lemma~\ref{lem:embed}~\ref{lem:embed:3}) gives
		\begin{equation*}\label{eq:lowernwewD2}
			\intO |\nabla \omega^\perp|^p \,dx
			\geqslant
			C
			\left(\intO |\nabla \varphi_1|^{p-2} |\nabla \omega^\perp|^2 \,dx
			\right)^\frac{p}{2}
			\geqslant 
			C
			\gamma^\frac{p(p-2)}{2}
			\intO |\nabla \varphi_1|^{p-2} |\nabla \omega^\perp|^2 \,dx.
		\end{equation*}	
		Thus, the estimate \eqref{eq:lowernwewD} holds true.
		
		Combining \eqref{eq:lem:Cprime-2} with the lower bounds \eqref{eq:lowernwew} and \eqref{eq:lowernwewD}, and multiplying the resulting expression by $|u^\parallel|^p$, we pass back to the function $u = u^\parallel \varphi_1 + u^\perp \in \mathcal{C}_\gamma'$ and deduce the desired inequality
		$$
		\intO |\nabla u|^p \,dx 
		- 
		\lambda_1 
		\left(\intO |u|^q \,dx\right)^\frac{p}{q}
		\geqslant 
		C\left(
		|u^\parallel|^{p-2}
		\intO |\nabla \varphi_1|^{p-2} |\nabla u^\perp|^2 \,dx + \intO |\nabla u^\perp|^p \,dx
		\right).
		\qedhere
		$$
	\end{proof}

	\smallskip
	\noindent
	The proof of Theorem \ref{thm:main} follows by combining Lemmas \ref{lem:inside} and \ref{lem:outside}.

	\section{Generalization. Proof of Theorem \ref{thm:gen}}\label{sec:general:proof}

	Let us denote the expression (without the constant $C$) on the right-hand side of the improved Friedrichs inequality~\eqref{eq:improvedFriedrichs-gen} as $M_l[u]$, i.e.,
	\begin{equation}\label{eq:Ml}
		M_l[u] 
		= 
		\big| l[u]\big |^{p-2} \|Pu\|_{\varphi_1}^2
		+
		\|\nabla Pu\|_p^p,
	\end{equation}
	where $Pu = u - l[u]\varphi_1$ is the projection to $\text{Ker}(l)$, and $l[\varphi_1]=1$.
	
	Our goal is to prove the following equivalence statement.
	\begin{proposition}\label{prop:equiv}
		Let $p \geqslant 2$ and \ref{A} be satisfied.
		Let $l_1, l_2: \W \to \mathbb{R}$ be two distinct bounded linear functionals such that $l_1[\varphi_1] = l_2[\varphi_1] = 1$.
		Then there exist constants $C_1, C_2>0$ such that $C_1 M_{l_1}[u] \leqslant M_{l_2}[u] \leqslant C_2 M_{l_1}[u]$ for any $u \in W_0^{1,p}(\Omega)$.
	\end{proposition}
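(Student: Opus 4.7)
The plan is to prove only one of the two inequalities, say $M_{l_2}[u] \leqslant C_2\,M_{l_1}[u]$; the reverse then follows identically by interchanging $l_1$ and $l_2$.

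I would decompose $u = \alpha\,\varphi_1 + \tilde v$ with $\alpha := l_1[u]$ and $\tilde v := P_1 u \in \mathrm{Ker}(l_1)$. Using the normalization $l_2[\varphi_1] = 1$, a direct computation yields
\begin{equation*}
l_2[u] = \alpha + \beta, \qquad P_2 u = \tilde v - \beta\,\varphi_1, \qquad \beta := l_2[\tilde v],
\end{equation*}
and the continuity of $l_2$ on $\W$ supplies the pointwise bound $|\beta| \leqslant \|l_2\|_{\W^*}\,\|\nabla\tilde v\|_p$. Applying the triangle inequality in the norms $\|\cdot\|_{\varphi_1}$ and $\|\nabla\cdot\|_p$, noting that $\|\varphi_1\|_{\varphi_1}^2 = \|\nabla\varphi_1\|_p^p$, and using the elementary bound $|a+b|^{p-2} \leqslant C(|a|^{p-2}+|b|^{p-2})$ valid for $p \geqslant 2$, the three factors entering $M_{l_2}[u]$ obey
\begin{equation*}
\|\nabla P_2 u\|_p^p \leqslant C\|\nabla\tilde v\|_p^p, \quad
\|P_2 u\|_{\varphi_1}^2 \leqslant 2\|\tilde v\|_{\varphi_1}^2 + C\|\nabla\tilde v\|_p^2, \quad
|l_2[u]|^{p-2} \leqslant C\bigl(|\alpha|^{p-2}+\|\nabla\tilde v\|_p^{p-2}\bigr).
\end{equation*}

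Multiplying these estimates and summing produces four contributions. Three are harmless: the summand $|\alpha|^{p-2}\|\tilde v\|_{\varphi_1}^2$ is literally a summand of $M_{l_1}[u]$; the summand $\|\nabla\tilde v\|_p^{p-2}\,\|\tilde v\|_{\varphi_1}^2$ is bounded by $\|\nabla\varphi_1\|_p^{p-2}\,\|\nabla\tilde v\|_p^p$ via H\"older's inequality, which is precisely the content of the continuous embedding $\W \hookrightarrow \D$ from Lemma~\ref{lem:embed}~\ref{lem:embed:3}; and the summand $\|\nabla\tilde v\|_p^p$ is the second piece of $M_{l_1}[u]$.

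The genuine difficulty is the remaining cross term $|\alpha|^{p-2}\,\|\nabla\tilde v\|_p^2$, which is not termwise bounded by $M_{l_1}[u]$. I would dispose of it by the cone dichotomy of Sections~\ref{sec:hard}--\ref{sec:easy}. In the ``outer'' regime $\|\nabla\tilde v\|_p \geqslant |\alpha|$ one has $|\alpha|^{p-2}\|\nabla\tilde v\|_p^2 \leqslant \|\nabla\tilde v\|_p^p$ directly, a summand of $M_{l_1}[u]$. In the ``inner'' regime $\|\nabla\tilde v\|_p < |\alpha|$, where $u$ is close to $\mathbb{R}\varphi_1$, I would replace the crude triangle-inequality estimate of $\|P_2 u\|_{\varphi_1}^2$ by the sharp expansion
\begin{equation*}
\|\tilde v - \beta\,\varphi_1\|_{\varphi_1}^2 = \|\tilde v\|_{\varphi_1}^2 - 2\beta\,\langle\tilde v,\varphi_1\rangle_{\varphi_1} + \beta^2\,\|\varphi_1\|_{\varphi_1}^2,
\end{equation*}
and exploit a sharpened control on $|\beta|$ coming from the compact embedding $\D \hookrightarrow L^\kappa(\Omega)$ of Lemma~\ref{lem:embed}~\ref{lem:embed:0}. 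Closing this inner case is the technical heart of the proof; I anticipate it needs a compactness-and-contradiction argument analogous to Proposition~\ref{prop:lambda<Lambda}, normalizing $M_{l_1}[u_n] = 1$ along a sequence, extracting a weak limit in $\W$, and using the weak lower semicontinuity of $\|\cdot\|_{\varphi_1}$ (valid since $|\nabla\varphi_1|^{p-2} \in L^{p/(p-2)}(\Omega)$) to force the necessary cancellation.
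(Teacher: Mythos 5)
Your reduction is correct as far as it goes: with $u=\alpha\varphi_1+\tilde v$, $\tilde v=P_1u$, the contributions $|\alpha|^{p-2}\|\tilde v\|_{\varphi_1}^2$, $\|\nabla\tilde v\|_p^{p-2}\|\tilde v\|_{\varphi_1}^2$, $\|\nabla\tilde v\|_p^p$ and $\|\nabla P_2u\|_p^p$ are all dominated by $M_{l_1}[u]$, and the cross term $|\alpha|^{p-2}\|\nabla\tilde v\|_p^2$ is harmless when $\|\nabla\tilde v\|_p\geqslant|\alpha|$. But the proof stops exactly where the proposition lives. In the inner regime the term you must absorb is really $|\alpha|^{p-2}\beta^2$ with $\beta=l_2[\tilde v]$, and bounding it by $C\,M_{l_1}[u]=C\,(|\alpha|^{p-2}\|\tilde v\|_{\varphi_1}^2+\|\nabla\tilde v\|_p^p)$ as $|\alpha|\to\infty$ amounts to a quantitative kernel estimate of the form $|l_2[\tilde v]|\leqslant C\|\tilde v\|_{\varphi_1}$ for $\tilde v\in\mathrm{Ker}(l_1)$, which is not a consequence of the boundedness of $l_2$ on $\W$ together with the soft tools you invoke: the compact embedding $\D\hookrightarrow L^\kappa(\Omega)$ only controls functionals given by $L^{\kappa'}$-densities, not a general element of $(\W)^*$, and the compactness-and-contradiction scheme you sketch does not close either. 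Indeed, normalizing $M_{l_1}[u_n]=1$ in the inner cone gives $\|\tilde v_n\|_{\varphi_1}\leqslant|\alpha_n|^{-(p-2)/2}\to0$ and $\|\nabla\tilde v_n\|_p\leqslant1$, hence $\tilde v_n\rightharpoonup0$ weakly in $\W$ and $\beta_n\to0$; but the quantity to be contradicted is $|\alpha_n|^{p-2}\beta_n^2$, and the purely qualitative facts $\beta_n\to0$, $\alpha_n\to\infty$ say nothing about it — one needs a rate for $\beta_n$ relative to $\|\tilde v_n\|_{\varphi_1}$, which weak lower semicontinuity does not supply. So the step you defer as the ``technical heart'' is not a routine adaptation of Proposition~\ref{prop:lambda<Lambda}; it is essentially the whole content of the statement, and as written the proposal is incomplete.

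The paper avoids this obstruction by a different decomposition. It constructs $\psi$ with $l_1[\psi]=1$, $l_2[\psi]=-1$ and writes $u=\alpha\varphi_1+\beta\psi+w$ with $w\in\mathrm{Ker}(l_1)\cap\mathrm{Ker}(l_2)$, so that no functional value of the remainder ever has to be estimated; then $M_{l_1}[u]$ and $M_{l_2}[u]$ are squeezed between expressions of the \emph{same} shape, namely $|\alpha\pm\beta|^{p-2}(|\beta|+\|w\|_{\varphi_1})^2+(|\beta|+\|\nabla w\|_p)^p$ (upper bounds by the triangle inequality, lower bounds by the inverse triangle inequality of Lemma~\ref{lem:trian}), and the contradiction argument with $M_{l_1}[u_n]=1$, $M_{l_2}[u_n]\to0$ uses only $\beta_n\to0$, $w_n\to0$, $\alpha_n\to\infty$ and the elementary limit $|\alpha_n-\beta_n|^{p-2}/|\alpha_n+\beta_n|^{p-2}\to1$; no weak limits and no compact embedding are needed. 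To salvage your asymmetric decomposition you would have to actually prove the kernel estimate above in the inner regime, or switch, as the paper does, to a remainder lying in both kernels.
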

	
	Once Proposition~\ref{prop:equiv} is established, Theorem~\ref{thm:gen} follows directly from Theorem~\ref{thm:main} by taking one of the functionals, say, $l_1$, as $l_1[u]=\intO \varphi_1^{q-1} u \,dx/\intO \varphi_1^{q}\,dx$. 
	(In fact, in order to prove Theorem~\ref{thm:main}, it is sufficient to apply only one part of Proposition~\ref{prop:equiv}, i.e., that $M_{l_2}[u] \leqslant C_2 M_{l_1}[u]$.)
	
	\begin{proof}[Proof of Proposition~\ref{prop:equiv}]
		We start with several auxiliary observations.
		Since $l_1$ and $l_2$ are distinct and not linearly dependent (in view of the normalization assumption $l_1[\varphi_1] = l_2[\varphi_1] = 1$), their kernels do not coincide, and hence there exists $v_1 \in W_0^{1,p}(\Omega)$ such that $l_1[v_1] = 1$ and $l_2[v_1] = 0$.  
		Then we define $v_2 = \varphi_1 - v_1$, so that $l_1[v_2] = 0$ and $l_2[v_2] = 1$. 
		Finally, we introduce $\psi = v_1 - v_2 = 2 v_1 - \varphi_1$, and hence $l_1[\psi] = 1$ and $l_2[\psi] = -1$. 
		Notice that $\psi \not\in \mathbb{R}\varphi_1$.
		With the help of $\varphi_1$ and $\psi$, any $u \in \W$ can be decomposed as
		\begin{equation}
			\label{even-odd_decomposition}
			u = \alpha \varphi_1 + \beta \psi + w, 
		\end{equation}
		where 
		$$
		\alpha = \frac{l_1[u]+l_2[u]}{2},
		\quad
		\beta = \frac{l_1[u]-l_2[u]}{2},
		$$
		and $w \in \text{Ker}(l_1) \cap \text{Ker}(l_2)$, i.e., $l_1[w] = l_2[w] = 0$. 
		We have
		\begin{align}
			\label{eq:l1p1}
			l_1[u] &= \alpha + \beta, 
			\quad 
			P_1 u = \beta (\psi - \varphi_1) + w,\\
			\label{eq:l2p2}
			l_2[u] &= \alpha - \beta,
			\quad 
			P_2 u = \beta (\psi + \varphi_1) + w.
		\end{align}
		Using \eqref{eq:l1p1} and \eqref{eq:l2p2}, we rewrite $M_{l_1}$ and $M_{l_2}$ defined by \eqref{eq:Ml} as
		\begin{align*}
			M_{l_1}[u]
			&=
			|\alpha+\beta|^{p-2}
			\|\beta (\psi -\varphi_1) + w\|_{\varphi_1}^2
			+
			\|\beta \,\nabla(\psi - \varphi_1) + \nabla w\|_p^p,
			\\
			M_{l_2}[u]
			&=	
			|\alpha-\beta|^{p-2}
			\|\beta (\psi +\varphi_1) + w\|_{\varphi_1}^2
			+
			\|\beta \,\nabla(\psi + \varphi_1) + \nabla w\|_p^p.
		\end{align*}
		Since $\varphi_1$ and $\psi$ are fixed, we employ the triangle inequality to
		estimate $M_{l_1}$ from above as follows:
		\begin{align}
			\label{first_seminorm_estimate}
			M_{l_1}[u]
			\leq 
			C_1 
			|\alpha + \beta|^{p-2} (|\beta| + \|w\|_{\varphi_1})^2
			+
			C_1
			(|\beta| + \|\nabla w\|_p)^p,
		\end{align}
		where $C_1>0$ does not depend on $u$. 
		A similar estimate holds true also  for $M_{l_2}$.
		On the other hand, 
		recalling that $\W \hookrightarrow \D$, $l_1[\psi+\varphi_1]=2$, and $w \in \text{Ker}(l_1)$, we apply Lemma~\ref{lem:trian} (see below) with $l=l_1$ to provide the following lower estimate for $M_{l_2}$:
		\begin{align}
			\label{second_seminorm_estimate2}
			M_{l_2}[u] 
			\geq
			C_2 
			|\alpha - \beta|^{p-2} (|\beta| + \|w\|_{\varphi_1})^2
			+
			C_2
			(|\beta| + \|\nabla w\|_p)^p,
		\end{align}
		where $C_2>0$ is independent of $u$.
		A similar estimate can be derived also for $M_{l_1}$.
		
		Assume now, contrary to the claim of the proposition, that $M_{l_1}$ and $M_{l_2}$ are not equivalent.
		That is, noting that $M_{l_1}$ and $M_{l_2}$ are $p$-homogeneous, 
		we can assume, without loss of generality, that there exists a sequence $\{u_n\} \subset \W$ such that $M_{l_1}[u_n] = 1$ and $M_{l_2}[u_n] \to 0$. 
		Using the decomposition \eqref{even-odd_decomposition}, we write 
		\begin{equation}
			u_n = \alpha_n \varphi_1 + \beta_n \psi + w_n.    
		\end{equation}
		We obtain from \eqref{second_seminorm_estimate2} that $\beta_n \to 0$ and $w_n \to 0$ strongly in $W_0^{1,p}(\Omega)$. 
		If $p=2$, then we get a contradiction to \eqref{first_seminorm_estimate} and $M_{l_1}[u_n] = 1$.
		Hence, assume that $p>2$.
		The convergence $w_n \to 0$ strongly in $W_0^{1,p}(\Omega)$ implies that $w_n \to 0$ strongly in $\D$, see  Lemma~\ref{lem:embed}~\ref{lem:embed:3}.
		We deduce from \eqref{first_seminorm_estimate} and $M_{l_1}[u_n] = 1$ that
		\begin{equation}
			\lim_{n\to \infty} |\alpha_n + \beta_n|^{p-2} (|\beta_n| + \|w_n\|_{\varphi_1})^2 
			\geq C_1^{-1},
		\end{equation}
		which yields $\alpha_n \to \infty$. But then
		\begin{equation}
			\lim_{n\to \infty} |\alpha_n - \beta_n|^{p-2} (|\beta_n| + \|w_n\|_{\varphi_1})^2 
			= 
			\lim_{n \to \infty} \frac {|\alpha_n - \beta_n|^{p-2}}{|\alpha_n + \beta_n|^{p-2}} \lim_{n\to \infty} |\alpha_n + \beta_n|^{p-2} (|\beta_n| + \|w_n\|_{\varphi_1})^2 \geq C_1^{-1},  
		\end{equation}
		which contradicts \eqref{second_seminorm_estimate2} in view of the assumption $M_{l_2}[u_n] \to 0$.
	\end{proof}
	
	Let us provide the following auxiliary result about an inverse triangle type inequality which is used in the proof of Proposition~\ref{prop:equiv}.
	\begin{lemma}\label{lem:trian}
		Let $X$ be a Banach space. 
		Let $l: X \to \mathbb{R}$ be a bounded linear functional. 
		Let $\omega \in X$ be such that $l[\omega] \ne 0$. Then there exists a constant $C>0$ such that for any $u \in \mathbb{R} \omega$ and $v \in \mathrm{Ker} (l)$ the following estimate holds:
		\begin{equation}\label{eq:inverse}
			\|u+v\| \geqslant C(\|u\|+\|v\|).   
		\end{equation}
	\end{lemma}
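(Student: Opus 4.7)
The plan is to use the functional $l$ itself as a bounded projector onto the one-dimensional subspace $\mathbb{R}\omega$, and then recover the bound on $\|v\|$ by the ordinary triangle inequality. The key observation is that since $l$ is bounded and $\mathrm{Ker}(l)$ is closed, the direct sum $\mathbb{R}\omega \oplus \mathrm{Ker}(l)$ is automatically a topological direct sum, which is precisely what \eqref{eq:inverse} expresses.

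Concretely, I would write $u = t\omega$ for some $t \in \mathbb{R}$ and apply $l$ to $u+v$. Since $v \in \mathrm{Ker}(l)$, we get $l[u+v] = t\,l[\omega]$, so
\begin{equation*}
|t| = \frac{|l[u+v]|}{|l[\omega]|} \leqslant \frac{\|l\|_{X^*}}{|l[\omega]|}\,\|u+v\|,
\end{equation*}
and multiplying by $\|\omega\|$ gives $\|u\| = |t|\,\|\omega\| \leqslant K\|u+v\|$ with $K = \|l\|_{X^*}\|\omega\|/|l[\omega]|$. The ordinary triangle inequality then yields $\|v\| \leqslant \|u+v\| + \|u\| \leqslant (1+K)\|u+v\|$, and summing the two bounds produces $\|u\| + \|v\| \leqslant (1+2K)\|u+v\|$, which is \eqref{eq:inverse} with $C = (1+2K)^{-1}$.

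There is no real obstacle here; the only thing to keep in mind is that $|l[\omega]| > 0$ by hypothesis so the division is legitimate, and that boundedness of $l$ on $X$ is used exactly once, in controlling $|l[u+v]|$ by $\|u+v\|$. No assumption on $X$ beyond its being a normed (in fact just a normed linear) space is needed, so the Banach structure is not actually invoked in the argument.
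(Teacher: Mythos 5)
Your proof is correct and follows essentially the same route as the paper's: apply $l$ to $u+v$ to bound $\|u\|$ by $\tfrac{\|l\|_{X^*}\|\omega\|}{|l[\omega]|}\,\|u+v\|$, then use the ordinary triangle inequality to bound $\|v\|$, which is exactly the computation in the paper's proof up to the normalization $l[\omega]=1$. Your side remark that completeness of $X$ is never used is also accurate.
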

	
	\begin{proof}
		Assume, without loss of generality, that $l[\omega] = 1$. 
		Let us take any $u \in \mathbb{R}\omega$ and $v \in \mathrm{Ker} (l)$, and 
		define $\phi = u+v$.
		Since $X = \mathbb{R}\omega \oplus  \mathrm{Ker} (l)$, we have 
		$\phi = l[\phi] \omega + P \phi$ and hence, by the uniqueness of the decomposition,
		$u=l[\phi]\omega$ and 
		$v=P \phi$, where $P: X \to \mathrm{Ker} (l)$ is a projection operator defined as $P \phi = \phi - l[\phi] \omega$.
		Then
		\begin{equation}
			\|u\|+\|v\| 
			= 
			|l[\phi]|\,\|\omega\| + \|P \phi\| 
			\leqslant 
			2|l[\phi]|\,\|\omega\| + \|\phi\| 
			\leqslant
			(2\|l\|_*\|\omega\| + 1) \|\phi\|,   
		\end{equation}
		where $\|\cdot\|_*$ is the operator norm.
		Denoting $C = (2\|l\|_*\|\omega\| + 1)^{-1}$, we obtain \eqref{eq:inverse}.
	\end{proof}

	\section{Application. Proof of Theorem \ref{thm:existence}}\label{sec:application}
	
	The $C^1(\W; \mathbb{R})$-energy functional associated with the problem \eqref{eq:P} is given by
	$$
	E[u] 
	= 
	\frac{1}{p} \intO |\nabla u|^p \,dx
	-
	\frac{\lambda_1}{p}
	\left(\intO |u|^q \,dx\right)^\frac{p}{q} 
	- f[u].
	$$
	Let us show that under the assumptions of Theorem~\ref{thm:existence} the functional $E$
	is bounded from below and satisfies $E[u] \geqslant o(1)$ as $\|\nabla u\|_p \to \infty$. 
	This will imply that $E$ possesses a global minimizer.
	Throughout the proof, we denote by $C>0$ a universal constant.
	
	We decompose any $u \in \W$ as in \eqref{eq:decomp}, i.e., $u = u^\parallel \varphi_1 + u^\perp$, where $u^\parallel$ and $u^\perp$ are defined by \eqref{eq:decomp2}.
	By Lemma \ref{lem:embed} \ref{lem:embed:0}, we have $\|\nabla u^\perp\|_p \geqslant C \|u^\perp\|_{\varphi_1}$ and
	$$
	f[u]
	=
	f[u^\perp]
	\leqslant
	\|f\|_* \|u^\perp\|_{\varphi_1}
	\leqslant
	C\|f\|_* \|\nabla u^\perp\|_p,
	$$
	where $\|\cdot\|_*$ stands for the operator norm.
	Applying Theorem \ref{thm:main} (see \eqref{eq:improvedFriedrichs-norm}), we deduce that
	\begin{equation}
		\label{eq:E1}
		E[u] 
		\geqslant 
		C\left(
		|u^\parallel|^{p-2} \|u^\perp\|_{\varphi_1}^2  + \|\nabla u^\perp\|_p^p
		\right)
		-
		C\|f\|_* \|\nabla u^\perp\|_p
	\end{equation}
	and
	\begin{equation}
		\label{eq:E2}
		E[u]
		\geqslant
		C
		|u^\parallel|^{p-2} \|u^\perp\|_{\varphi_1}^2 
		+ 
		C
		\|u^\perp\|_{\varphi_1}^p
		-
		\|f\|_* \|u^\perp\|_{\varphi_1}.
	\end{equation}
	We see from \eqref{eq:E1} (or \eqref{eq:E2}) that $E$ is bounded from below. 
	If $\|\nabla u\|_p \to \infty$, then we have $|u^\parallel| \to \infty$ or $\|\nabla u^\perp\|_p \to \infty$.
	In the latter case, \eqref{eq:E1} yields $E[u] \to +\infty$.
	If $|u^\parallel| \to \infty$ and $\{\|u^\perp\|_{\varphi_1}\}$ is separated from zero, then $E[u] \to +\infty$ as well, as it follows from \eqref{eq:E2}.
	Finally, if $|u^\parallel| \to \infty$ and $\|u^\perp\|_{\varphi_1} \to 0$, then we deduce from \eqref{eq:E2} that $E[u] \geqslant -\|f\|_2 \|u^\perp\|_{\varphi_1} = o(1)$.
	On the other hand, since $p,q>1$ and $f$ is nonzero, it is not hard to observe that $\inf_{\W} E < 0$.
	Thus, any minimizing sequence for $E$ is bounded in $\W$ and hence, by a standard argument, it converges strongly in $\W$ to a global minimizer of $E$ which is a (weak) solution of \eqref{eq:P}, up to a subsequence.
	\qed

	\section{Alternative improvement. Proof of Theorem \ref{thm:Br}}\label{sec:Br}	
	
	First, let us take any $w \in \W \setminus \{0\}$ such that 
	\begin{equation}\label{eq:1}
		\intO |w|^q \,dx = \intO \varphi_1^q \, dx.
	\end{equation}
	By the definition \eqref{eq:lambda1} of $\lambda_1$, we have
	\begin{equation}\label{eq:2}
		\intO |\nabla w|^p \,dx \geqslant \intO |\nabla \varphi_1|^p \, dx.
	\end{equation}
	Set
	$$
	\sigma_t = ((1-t) |w|^p + t \varphi_1^p)^\frac{1}{p},
	\quad
	t \in [0,1].
	$$
	By the enhanced hidden convexity \cite[Eq.~(2.10)]{BPZ} (see also  \cite[Eq.~(2), p.~178]{LL} for the case $p=2$ and $t=1/2$), we have
	\begin{equation}\label{eq:Pimp-proof1}
		\intO |\nabla \sigma_t|^p \,dx 
		+  
		Ct(1-t) \intO \mathcal{R}_p(|w|, \varphi_1;t) \,dx 
		\leqslant 
		(1-t) \intO |\nabla w|^p \,dx
		+
		t \intO |\nabla \varphi_1|^p \,dx \leqslant 
		\intO |\nabla w|^p \,dx,
	\end{equation}
	where the last inequality follows from \eqref{eq:2} and $\mathcal{R}_p$ is given by \eqref{eq:R}.
	In particular, $\sigma_t \in \W$ for all $t \in [0,1]$.
	On the other hand, by the concavity of the map $s \mapsto s^{q/p}$ and the equality \eqref{eq:1}, we get
	\begin{equation}\label{eq:Pimp-proof2}
		\intO \sigma_t^q \,dx 
		\geqslant 
		(1-t) \intO |w|^q \,dx
		+
		t \intO \varphi_1^q \,dx 
		=
		\intO |w|^q \,dx.
	\end{equation}
	Raising both sides of \eqref{eq:Pimp-proof2} to the power $p/q$, multiplying  the result by $\lambda_1$ and subtracting from \eqref{eq:Pimp-proof1}, we obtain
	$$
	\intO |\nabla w|^p \,dx
	-
	\lambda_1 \left(\intO |w|^q \,dx\right)^\frac{p}{q}
	\geqslant
	Ct(1-t) \intO \mathcal{R}_p(|w|,\varphi_1;t) \,dx
	+
	\intO |\nabla \sigma_t|^p \,dx 
	-
	\lambda_1 \left(\intO \sigma_t^q \,dx\right)^\frac{p}{q}
	$$
	for any $t \in [0,1]$.
	Using \eqref{eq:Friedrichs}, we estimate the difference of integrals containing $\sigma_t$ by zero and hence derive
	\begin{equation}\label{eq:Pimpr0}
		\intO |\nabla w|^p \,dx
		-
		\lambda_1 \left(\intO |w|^q \,dx\right)^\frac{p}{q}
		\geqslant
		C \max_{t \in [0,1]}
		\left[
		t(1-t) \intO \mathcal{R}_p(|w|,\varphi_1;t) \,dx
		\right]
	\end{equation}
	for any $w \in \W$ satisfying the normalization \eqref{eq:1}.
	
	Now we take \textit{any} $u \in \W \setminus \{0\}$.
	Consider $w = s u$, where $s = \|\varphi_1\|_q/\|u\|_q$.
	Such $w$ satisfies \eqref{eq:1} and hence \eqref{eq:Pimpr0}.
	We get
	$$
	s^p \left(\intO |\nabla u|^p \,dx
	-
	\lambda_1 \left(\intO |u|^q \,dx\right)^\frac{p}{q}
	\right)
	\geqslant
	C \max_{t \in [0,1]}
	\left[
	t(1-t) \intO \mathcal{R}_p(s|u|,\varphi_1;t) \,dx
	\right],
	$$
	that is,
	\begin{equation*}\label{eq:Pimpr2}
		\intO |\nabla u|^p \,dx
		-
		\lambda_1 \left(\intO |u|^q \,dx\right)^\frac{p}{q}
		\geqslant
		\frac{C\|u\|_q^p}{\|\varphi_1\|_q^p} \, 
		\max_{t \in [0,1]}
		\left[
		t(1-t) \intO \mathcal{R}_p\left(\frac{\|\varphi_1\|_q}{\|u\|_q}|u|,\varphi_1;t\right)dx
		\right],
	\end{equation*}
	which is exactly the improved Friedrichs inequality \eqref{eq:Pimpr}.
	
	\smallskip
	Let us now justify the improved Poincar\'e inequality \eqref{eq:improvedPoinc2}. 
	Take any $u \in \W \setminus \{0\}$ and set, as above,
	$$
	\widetilde{\sigma}_t = ((1-t) |u|^p + t \varphi_1^p)^\frac{1}{p},
	\quad
	t \in [0,1].
	$$
	By the enhanced hidden convexity, we have
	\begin{equation}\label{eq:Pimp-proof1-1}
		\intO |\nabla \widetilde{\sigma}_t|^p \,dx 
		+  
		Ct(1-t) \intO \mathcal{R}_p(|u|, \varphi_1;t) \,dx 
		\leqslant 
		(1-t) \intO |\nabla u|^p \,dx
		+
		t \intO |\nabla \varphi_1|^p \,dx,
	\end{equation}
	and
	\begin{equation}\label{eq:Pimp-proof2-1}
		\lambda_1 \intO \widetilde{\sigma}_t^p \,dx 
		=
		\lambda_1 (1-t) \intO |u|^p \,dx
		+
		\lambda_1 t \intO \varphi_1^p \,dx.
	\end{equation}
	Subtracting \eqref{eq:Pimp-proof2-1} from \eqref{eq:Pimp-proof1-1}, recalling that $\varphi_1$ is the minimizer of $\lambda_1$, 
	and using \eqref{eq:Friedrichs} to estimate the difference of integrals containing $\widetilde{\sigma}_t$ by zero, 
	we get
	$$
	\intO |\nabla u|^p \,dx
	-
	\lambda_1 \intO |u|^p \,dx
	\geqslant
	Ct \intO \mathcal{R}_p(|u|,\varphi_1;t) \,dx
	$$
	for any $t \in [0,1)$.
	Tending now $t$ to $1$, we derive 
	the desired inequality \eqref{eq:improvedPoinc2}.
	Finally, if $p \geqslant 2$, then the definition \eqref{eq:R} of $\mathcal{R}_p$ gives
	\begin{equation}\label{eq:imporved-step}
		\intO |\nabla u|^p \,dx 
		- 
		\lambda_1 
		\intO |u|^p \,dx
		\geqslant
		C \intO \left|\nabla |u| - \frac{|u|}{\varphi_1} \nabla \varphi_1 \right|^p \,dx.
	\end{equation}
	Observing that
	$$
	\left|\nabla |u| - \frac{|u|}{\varphi_1} \nabla \varphi_1 \right|
	=
	\left|\nabla u - \frac{u}{\varphi_1} \nabla \varphi_1 \right|
	=
	\left|\nabla \left(\frac{u}{\varphi_1}\right)\right| \varphi_1
	\quad \text{a.e.\ in}~ \Omega,
	$$
	we obtain the inequality  \eqref{eq:improvedPoinc3}.
	\qed

	\appendix
	\section{Appendix. Proof of Lemma~\ref{lem:embed}}\label{sec:appendix}

	We recall that $p>2$, $1<q<p^*$, and \ref{A} is satisfied.
	For convenience, throughout the proof, we  sometimes denote the norms in $W_0^{1,r}(\Omega)$ and $L^r(\Omega)$ (for $r \geqslant 1$) as $\|\cdot\|_{W_0^{1,r}(\Omega)}$ and $\|\cdot\|_{L^r(\Omega)}$, respectively, to reflect the dependence on $\Omega$. 
	Moreover, we denote by $C>0$ a universal constant whose value may vary from  inequality to inequality.
	
	\ref{lem:embed:0}
	Let us show the existence of $\kappa \in (2,2^*)$ such that the embedding $\D \hookrightarrow L^\kappa(\Omega)$ is compact.
	We start with several remarks.
	In the case $p=q>2$, the compactness of $\D \hookrightarrow L^2(\Omega)$ is proved in \cite[Lemma~4.2]{takac-only}. Although \cite[Lemma~4.2]{takac-only} requires $\Omega$ to satisfy the assumption \ref{A} \textit{and} the interior sphere condition when $N \geqslant 2$, the latter requirement is used only to guarantee that the first eigenfunction $\varphi_1$ obeys the Hopf maximum principle on $\partial \Omega$ (see \cite[(2.2)]{takac-only}), which is by now known to be true assuming \ref{A} alone, see \cite{melkshah}. 
	The arguments of \cite[Lemma~4.2]{takac-only} can be slightly amended to cover the general case $p > 2$ and $1<q<p^*$.
	In the case $N=1$ and when $p=q>2$, the compactness of the embedding  $\D \hookrightarrow L^\kappa(\Omega)$ for any $\kappa>1$ follows from \cite[Lemma~1.3, p.~238]{FNSS}, and analogous arguments can be applied to cover any $p>2$ and $q>1$.
	Thus, hereinafter, we will be interested only in the case $N \geqslant 2$.
	
	The continuity of the embedding  $\D \hookrightarrow L^{\sigma}(\Omega)$ for some $\sigma>2$ follows from \cite[Theorem~3.1, (3.6)]{sciunz} in combination with \cite[Theorem~2.3]{sciunz}. 
	The results of \cite{sciunz} are formulated under an abstract smoothness assumption on $\Omega$. 
	Nevertheless, the assumption \ref{A} is sufficient for our aims, which is shown in \cite[Theorem~2.7]{brasco-lind}. 
	In the current form,  \cite[Theorem~2.7]{brasco-lind} requires $q \geqslant p$, but the proof remains valid with no changes at least for a \textit{fixed} $q \in (1,p^*)$.
	Indeed, the assumption $q \geqslant p$ is used in the proof of \cite[Theorem~2.7]{brasco-lind} only in the derivation of an explicit and uniform $L^\infty(\Omega)$-bound for minimizers of $\lambda_1$ with respect to $q$, see \cite[Proposition~2.4]{brasco-lind}.
	Since we are interested in a \textit{fixed} $q \in (1,p^*)$, it is sufficient to substitute \cite[Proposition~2.4]{brasco-lind} 
	with the fact that any minimizer of $\lambda_1$ belongs to $L^\infty(\Omega)$, see, e.g., \cite[Theorem~II]{otani}.
	
	Based on the continuity of the embedding $\D \hookrightarrow L^{\sigma}(\Omega)$, we show that $\D \hookrightarrow L^{\kappa}(\Omega)$  compactly for some $\kappa  \in (2,\sigma)$. 
	Our proof is inspired by \cite[Lemma~4.2]{takac-only}, but we provide more details in subtle places.
	We assume, without loss of generality, that $\sigma \in (2,2^*)$.
	
	Let $\Omega_\delta$ be a strip of width $\delta>0$ around the boundary $\partial \Omega$:
	\begin{equation}
		\Omega_\delta = \lbrace x \in \Omega:~ \mathrm{dist}(x,\partial \Omega) < \delta \rbrace.
	\end{equation}
	Under the assumption \ref{A}, we have $\varphi_1 \in C^{1}(\overline{\Omega})$ (see \cite{Lieberman}) and it follows from \cite{melkshah} that $\varphi_1$ satisfies $\partial \varphi_1/ \partial \nu < 0$ on $\partial\Omega$, where $\nu$ is the outward unit normal vector to $\partial \Omega$.
	Thus, for any sufficiently small $\delta>0$ there exists $C>0$ such that $|\nabla \varphi_1| \geqslant C > 0$ in $\Omega_\delta$. 
	As a consequence, we can find $C>0$ such that
	\begin{equation}\label{eq:equivalentnorms}
		C^{-1} \|v\|_{W_0^{1,2}(\Omega_\delta)} \leqslant \|v\|_{\varphi_1} \leqslant C \|v\|_{W_0^{1,2}(\Omega_\delta)}
		\quad \text{for any}~~ v \in C_0^\infty(\Omega_\delta).
	\end{equation}		
	Set $\kappa = \sigma/2+1$ and observe that $\kappa \in (2,\sigma)$.
	Taking any $v \in C_0^\infty(\Omega)$, we use $|v|^\kappa$ as a test function for \eqref{eq:varphi-solution} and get
	\begin{align}
		\notag
		&\lambda_1 \left(\intO \varphi_1^q \,dx\right)^\frac{p-q}{q} 
		\intO \varphi_1^{q-1} |v|^\kappa \,dx
		=
		\intO |\nabla \varphi_1|^{p-2} \langle \nabla \varphi_1, \nabla (|v|^\kappa) \rangle \,dx
		\\
		\label{eq:47takac}
		&\leqslant
		\kappa 
		\intO |\nabla \varphi_1|^{\frac{p}{2}-1}  |\nabla v| \,
		|\nabla \varphi_1|^{\frac{p}{2}} |v|^{\kappa-1} \,dx
		\leqslant
		\kappa \|v\|_{\varphi_1} \left(
		\intO |\nabla \varphi_1|^{p} |v|^{\sigma} \,dx
		\right)^{\frac{1}{2}}.
	\end{align}	
	Fix a cut-off function $\xi \in C_0^\infty(\Omega)$ such that
	\begin{equation*}
		\xi(x) = 0 ~\mbox{ if } x \in \Omega_\delta
		\quad \text{and} \quad 
		\xi(x) = 1 ~\mbox{ if } x \in \Omega \setminus \Omega_{2\delta},
	\end{equation*}
	and consider the decomposition $v = \xi v + (1-\xi) v$.
	We use the continuity of the embedding $\D \hookrightarrow L^2(\Omega)$ and the $C^1(\overline{\Omega})$-regularity of $\varphi_1$ to get
	\begin{align*}
		\|(1 - \xi) v\|_{\varphi_1}^2 
		=
		\intO |\nabla \varphi_1|^{p-2}((1 - \xi) \nabla v - \nabla \xi \cdot v)^2 \,dx
		\leqslant
		C\|v\|_{\varphi_1}^2
		+
		C\|v\|_{L^2(\Omega)}^2
		\leqslant
		C\|v\|_{\varphi_1}^2,
	\end{align*}
	where $C>0$ does not depend on $v \in C_0^\infty(\Omega)$.
	Thus, by the density of $C_0^\infty(\Omega)$ in $\D$, we conclude that the multiplication by $(1-\xi)$ is a bounded linear operator in $\D$.
	Consequently, the multiplication by $\xi$ is also a bounded linear operator in $\D$.
	
	Thanks to the continuity of the embedding $\D \hookrightarrow L^{\sigma}(\Omega)$, any sequence $\{v_n\} \subset \D$ which converges weakly in $\D$ must also converge weakly in $L^{\sigma}(\Omega)$ and $L^{\kappa}(\Omega)$ to the same limit.
	Assume, without loss of generality, that this limit is zero and that $\{v_n\} \subset C_0^\infty(\Omega)$. 
	Since the multiplications by $\xi$ and $(1 - \xi)$ are bounded linear operators in $\D$, both $\{\xi v_n\}$ and $\{(1 - \xi) v_n\}$ also converge to zero weakly in $\D$,  $L^{\sigma}(\Omega)$, and $L^{\kappa}(\Omega)$.
	In order to establish the desired compactness of the embedding $\D \hookrightarrow L^{\kappa}(\Omega)$, it is sufficient to prove that both $\{\xi v_n\}$ and $\{(1 - \xi) v_n\}$ converge to zero \textit{strongly} in $L^\kappa(\Omega)$.
	Since $\text{supp}\,(1 - \xi) v_n \subset \Omega_{2\delta}$ and $2<\kappa<\sigma<2^*$, the convergence of $\{(1 - \xi) v_n\}$ follows from \eqref{eq:equivalentnorms}, as $\D$ restricted to functions supported in $\Omega_{2 \delta}$ coincides with $W_0^{1,2}(\Omega_{2 \delta})$ which is compactly embedded in $L^\kappa(\Omega_{2 \delta})$.
	
	Let us investigate the convergence of $\{\xi v_n\}$. Take any $\eta > 0$ and define
	\begin{equation*}
		U_\eta = \left\{ x \in \Omega:~ |\nabla \varphi_1(x)| > \frac \eta 2 \right\}
		\quad \text{and} \quad 
		U'_\eta = \left\{ x \in \Omega:~ |\nabla \varphi_1(x)| < \eta \right\}.
	\end{equation*}
	Thanks to the $C^1$-regularity of $\varphi_1$, both $U_\eta$ and $U'_\eta$ are open and $U_\eta \cup U'_\eta = \Omega$. 
	Since the weakly convergent sequence $\{\xi v_n\}\subset C_0^\infty(\Omega \setminus \Omega_{\delta})$ is bounded in $\D$ and $\varphi_1>0$ in $\overline{\Omega \setminus \Omega_{\delta}}$, we plug $\xi v_n$ into \eqref{eq:47takac} and obtain 
	\begin{equation}\label{eq:Ueta}
		C 
		\int_\Omega |\xi v_n|^\kappa \,dx 
		\leqslant
		C \left ( \int_{U_\eta} |\nabla \varphi_1|^p |\xi v_n|^{\sigma} \,dx \right )^{\frac 1 2}
		+
		C \left ( \int_{U_\eta'} |\nabla \varphi_1|^p |\xi v_n|^{\sigma} \,dx\right )^{\frac 1 2}.
	\end{equation}
	Fix any $\varepsilon>0$. 
	The boundedness of $\{\xi v_n\}$ in $\D$ implies its boundedness in  $L^{\sigma}(\Omega)$,
	and hence there exists $\eta_0 > 0$ such that for any $\eta \in (0,\eta_0)$ we get
	\begin{equation}\label{eq:Ueta1}
		C \left ( \int_{U_{\eta}'} |\nabla \varphi_1|^p |\xi v_n|^{\sigma} \, dx \right )^{\frac 1 2} \leqslant C \eta^{\frac p 2} \|\xi v_n\|_{L^{\sigma}(\Omega)}^\frac{\sigma}{2}\leqslant C \eta_0^{\frac p 2} \|\xi v_n\|_{L^{\sigma}(\Omega)}^\frac{\sigma}{2} \leqslant \varepsilon
	\end{equation}
	for all $n$. 
	Taking any $\eta \in (0,\eta_0)$, let us now prove the existence of $n_0=n_0(\varepsilon)>0$ such that the first term on the right-hand side of \eqref{eq:Ueta} is also less than $\varepsilon$ for any $n \geqslant n_0$. Denote by $w_n$ the restriction of $\xi v_n$ to $U_\eta$, i.e., $w_n = (\xi v_n) |_{U_\eta}$. 
	We have
	\begin{align*}
		\|w_n\|_{W^{1,2}(U_\eta)}^2 
		&= \int_{U_\eta} |\nabla w_n|^2 \,dx + \int_{U_\eta} |w_n|^2 \,dx \\
		&\leqslant \frac{2^{p-2}}{\eta^{p-2}} \int_{U_\eta} |\nabla \varphi_1|^{p-2} |\nabla w_n|^2 \,dx + \int_\Omega |w_n|^2 \,dx \leqslant C \|w_n\|_{\varphi_1}^2.
	\end{align*}
	Thus, the restriction to $U_\eta$ is a bounded linear operator from $\D$ to $W^{1,2}(U_\eta)$. Since bounded linear operators preserve the weak convergence, $\{w_n\}$ converges weakly to zero in $W^{1,2}(U_\eta)$.
	
	By the standard regularity theory, we have
	$\varphi_1 \in C^\infty(\Omega \setminus Z)$, where $Z=\{x \in \Omega: |\nabla \varphi_1|=0\}$.
	Thus, Sard's theorem asserts that the set of critical levels of the function $|\nabla \varphi_1|$ has zero measure. 
	Then, the implicit function theorem guarantees that $\partial U_\eta$ is smooth for almost all $\eta>0$. 
	Perturbing, if necessary, the value of $\eta \in (0,\eta_0)$ fixed above, we deduce that $W^{1,2}(U_\eta)$ is compactly embedded in $L^{\sigma}(U_\eta)$ by the Rellich-Kondrachov theorem, and hence $\{w_n\}$ converges to zero strongly in $L^{\sigma}(U_\eta)$.
	Therefore, we can find the desired $n_0=n_0(\varepsilon)$ such that
	\begin{equation}\label{eq:Ueta2}
		C \left ( \int_{U_\eta} |\nabla \varphi_1|^p |\xi v_n|^{\sigma} \right )^{\frac 1 2} \leqslant C \|\nabla \varphi_1\|_\infty^{\frac p 2}  \left ( \int_{U_\eta} |w_n|^{\sigma} \,dx \right )^{\frac 1 2} \leqslant \varepsilon
	\end{equation}
	for all $n \geqslant n_0$.
	Combining \eqref{eq:Ueta1} and \eqref{eq:Ueta2}, we conclude from \eqref{eq:Ueta} that 
	for any $\varepsilon>0$ there exists $n_1 \geqslant n_0>0$ such that $\|\xi v_n\|_{L^\kappa(\Omega)} \leqslant \varepsilon$ for any $n \geqslant n_1$.
	This completes the proof of the strong convergence of $\{v_n\}$ to zero in $L^{\kappa}(\Omega)$, which gives the compactness of the embedding $\D \hookrightarrow L^{\kappa}(\Omega)$, where $\kappa = \sigma/2+1\in (2,\sigma)$.
	
	\ref{lem:embed:2} For the continuity of the embedding $\D \hookrightarrow W_0^{1,\theta}(\Omega)$ for some $\theta > 1$ in the case $N \geqslant 2$, we refer to  \cite[Corollary~2.8]{brasco-lind} which remains valid for any fixed $1<q<p^*$ with no changes in the proof, as discussed in the proof of \ref{lem:embed:0} above.
	In the case $N=1$ and when $p=q>2$, the claim is given by \cite[Lemma~1.3, p.~238]{FNSS} with an explicit value of $\theta$, and the same argument applies for any $p>2$ and $q>1$.
	
	\ref{lem:embed:3} 
	The continuity of the embedding $\W \hookrightarrow \D$ follows trivially
	from the regularity $\varphi_1 \in C^1(\overline{\Omega})$ 
	and the H\"older inequality.
	\qed

	\bigskip
	\noindent
	\textbf{Acknowledgments.}
	V.~Bobkov was supported by RSF Grant number \href{https://rscf.ru/en/project/22-21-00580/}{22-21-00580}. 
	S.~Kolonitskii was supported by RFBR Grant number 20-01-00630~A.
	The authors are grateful to \textsc{L.~Brasco}, \textsc{A.I.~Nazarov}, and \textsc{P.~Tak\'a\v{c}} for 
	discussions and valuable comments which helped to improve the results and exposition.

	\addcontentsline{toc}{section}{\refname}
	\small

\end{document}